\documentclass[12pt]{article}
\usepackage{amsmath,amssymb,amsfonts,amsthm,graphicx,subcaption,xcolor,verbatim}
\usepackage[margin=1in]{geometry}

\theoremstyle{plain}

\newtheorem{theorem}{Theorem}[section]
\newtheorem{proposition}[theorem]{Proposition}
\newtheorem{construction}[theorem]{Construction}
\newtheorem{lemma}[theorem]{Lemma}

\newtheorem{corollary}[theorem]{Corollary}
\newtheorem{question}[theorem]{Question}

\theoremstyle{remark}

\newtheorem*{remark}{Remark}

\title{Embeddings of critical graphs near the Heawood bound}
\author{Timothy Sun\\Department of Computer Science\\San Francisco State University}
\date{}

\newcommand{\Z}{\mathbb{Z}}
\newcommand{\TT}[1]{\underline{\color{red} #1}}

\begin{document}

\maketitle

\begin{abstract}
Complementing a theorem of \v{S}krekovski, we characterize the $(h-1)$-critical graphs embeddable in surfaces of Euler genus at least $5$, where $h$ denotes the Heawood number of the surface. Outside of a few small cases, the bulk of our proof is determining the genus of the join of a complete graph and the 5-cycle. As a byproduct of our proof, we also provide a simpler solution to the minimum triangulations problem for nonorientable surfaces using the theory of current graphs.
\end{abstract}

\section{Introduction}

Given a closed surface of Euler characteristic $\chi$, the \emph{Euler genus} of the surface is defined to be $2-\chi$, i.e., the usual crosscap number for nonorientable surfaces and twice the number of handles for an orientable surface. For short, we say that a graph is \emph{$g$-embeddable} if it is embeddable in the surface(s) of Euler genus $g$. The \emph{Heawood number} $H(g)$ of a surface of Euler genus $g \geq 2$ is defined to be
$$H(g) = \frac{7 + \sqrt{1+24g}}{2}.$$
The function $H(g)$ appears in two related problems on embeddings of dense graphs. First, Heawood \cite{Heawood} showed that every $g$-embeddable graph, for $g \geq 2$, can be colored with $h(g) := \lfloor H(g) \rfloor$ colors.\footnote{In the literature, the Heawood number is usually defined to be $h(g)$, but our second problem involving the function $H(g)$ rounds this value up, instead.} Ringel, Youngs, and others \cite{Ringel-MapColor} showed that this bound is tight for every surface except the Klein bottle by finding an embedding of the complete graph on $h(g)$ vertices in that surface:

\begin{theorem}[see Ringel \cite{Ringel-MapColor}]
For $n \geq 3$, the orientable genus of $K_n$ is
$$\gamma(K_n) = \left\lceil \frac{(n-3)(n-4)}{12} \right\rceil.$$
\label{thm-mct-orient}
\end{theorem}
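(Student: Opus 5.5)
The statement has two halves: a lower bound from Euler's formula, and a matching upper bound that requires building explicit embeddings. The lower bound is routine. In any cellular embedding of $K_n$ in the orientable surface of genus $\gamma$, we have $V-E+F = 2-2\gamma$, with $V=n$ and $E=\binom{n}{2}$. Every face has length at least $3$, so $3F \le 2E$. Substituting gives
$$\gamma \ge \frac{(n-3)(n-4)}{12},$$
and since $\gamma$ is an integer we may take the ceiling. Because the minimum-genus embedding of a connected graph is cellular (Youngs), this bounds $\gamma(K_n)$ from below. For $n\le 7$ the bound is already attained by small explicit embeddings: $K_4$ in the sphere, and $K_5,K_6,K_7$ in the torus. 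So all the real work is in the upper bound, and the plan follows the Map Color Theorem, organized by the residue $s = n \bmod 12$.

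The main tool for the upper bound is index-one current graphs, and more generally index-$k$ current graphs. A current graph is a cubic graph with a rotation system whose arcs carry currents in $\Z_m$, where $m = n$ or a nearby value. It must satisfy:
\begin{itemize}
\item Kirchhoff's current law at every vertex;
\item each nonzero element of $\Z_m$ appears exactly once as a current along the face-boundary walk (or walks).
\end{itemize}
The derived embedding, obtained from the log of the boundary walk, is then a triangular embedding of $K_m$. For $s = 0,3,4,7$, the quantity $(n-3)(n-4)/12$ is an integer, and we aim for exactly such triangular embeddings. Depending on the case, this is done either directly via $\Z_n$ or by using $\Z_{n-1}$ or $\Z_{n-3}$ together with a few vortices. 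A vortex is a vertex of the current graph whose incident current generates a subgroup. Each vortex contributes one extra vertex $x_i$ adjacent to the appropriate coset, and the "vortex" conditions guarantee that the local rotation at $x_i$ closes up into a triangulation.

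For the other residues, the target genus is $\lceil (n-3)(n-4)/12\rceil$, which exceeds the Euler bound by a fractional amount, so an exact triangulation is impossible. Here I would take a near-triangular embedding instead. One option is an embedding of $K_n - K_2$ or $K_n - K_3$ (a few missing edges), followed by adding those edges with at most the allowed number of extra handles. The other option is an embedding with a small number of nontriangular faces, into which the missing edges are inserted. Counting faces shows that the required genus increase is exactly what the ceiling permits.

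The main obstacle is the construction of the current graphs themselves. In each of the twelve residue classes there must be an infinite family, parametrized by $n = 12t+s$, whose currents satisfy all the conditions uniformly in $t$. Several classes also need separate ad hoc treatment for small $t$: the notorious cases are $s=2,8,11$ and small exceptions such as $K_{18},K_{20}$, which require additional handles or special constructions. For each $s$, I would first try a ladder-shaped current graph, with rungs carrying arithmetic progressions of currents, and verify KCL and the "each current once" rule by induction on $t$. For the irregular residues, I would fall back on orientable additional-adjacency constructions: triangulate $K_{n-1}$ or a related graph, then add a new vertex by merging faces with a handle. The small leftover values I would handle by explicit computer-found rotation systems.
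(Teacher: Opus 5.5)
The paper does not prove this statement. It quotes the orientable Map Color Theorem from Ringel, and your outline follows the same Ringel--Youngs argument that the citation refers to. Your lower bound is complete: Euler's formula with $3F\le 2E$, cellularity of minimum-genus embeddings, and the small cases $n\le 7$.

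The upper bound, however, is described rather than proved, and this is a genuine gap. You defer everything of substance:
\begin{itemize}
\item for each residue of $n$ modulo $12$, an infinite family of current graphs satisfying Kirchhoff's law and the log condition uniformly in $s$;
\item the handle-addition steps for the non-triangular residues;
\item the exceptional small orders, such as $n=18,20,23$.
\end{itemize}
These are exactly the content of a book-length theorem. As written, your argument can only be completed the way the paper completes it, by citing Ringel.

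Several steps of your sketch would also fail if carried out literally.
\begin{itemize}
\item[(i)] A triangular embedding of $K_n-K_3$ does not give $K_n$ by simply tubing two faces together. The missing vertices $x,y,z$ are pairwise nonadjacent, so each triangular face contains at most one of them. A single tube therefore restores at most one of $xy,yz,zx$. You first need a vertex $u$ whose rotation contains $x,v,y$ consecutively. Flip $(u,v)\to(x,y)$ as in Construction~\ref{con-k2p2}, and only then use one tube to restore $xz$, $yz$, $uv$.
\item[(ii)] For $n\equiv 8,11\pmod{12}$, neither $K_n-K_2$ nor $K_n-K_3$ can triangulate an orientable surface. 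Here $\binom n2-3n+6=(n-3)(n-4)/2\equiv 4\pmod 6$, which forces the number of missing edges to be $\equiv 4\pmod 6$. Examples are $K_n-K_5$, whose ten edges must then be restored with two handles, or the four missing edges left by Construction~\ref{con-con8}.
\item[(iii)] Adding a new vertex to a triangulation of $K_{n-1}$ ``with a handle'' cannot reach the right genus, because $\gamma(K_n)-\gamma(K_{n-1})\approx (n-4)/6$. The economical way to add a vertex adjacent to all others is to subdivide an existing Hamiltonian face, which is exactly what a vortex supplies. Additional-adjacency handles only restore a few edges among vortex vertices.
\item[(iv)] Your current-graph axioms are the index-one ones. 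For index $k$, each circuit's log must contain every nonzero element exactly once, and a compatibility condition like (C6) is needed. A vortex's excess must generate $k\Z_m$ (or $2k\Z_m$), not merely ``a subgroup.''
\end{itemize}
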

\begin{theorem}[see Ringel \cite{Ringel-MapColor}]
For $n \geq 5$, $n \neq 7$, the nonorientable genus of $K_n$ is
$$\tilde{\gamma}(K_n) = \left\lceil \frac{(n-3)(n-4)}{6} \right\rceil.$$
\label{thm-mct-nonorient}
\end{theorem}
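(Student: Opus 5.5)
The lower bound is the standard Euler-characteristic count, and the upper bound is a case analysis by residue class with a current-graph construction in each class.

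\textbf{Lower bound.} Suppose $K_n$ is embedded in a nonorientable surface of Euler genus $g$. Every face has length at least $3$, so $3F \le 2E = n(n-1)$. Substituting into $V - E + F = 2 - g$ gives
$$g \ge 2 - n + \frac{n(n-1)}{2} - \frac{n(n-1)}{3} = \frac{(n-3)(n-4)}{6}.$$
Since $g$ is an integer, $\tilde{\gamma}(K_n) \ge \lceil (n-3)(n-4)/6 \rceil$.

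\textbf{Upper bound.} We split $n$ by its residue modulo $6$. When $(n-3)(n-4)/6$ is an integer, i.e.\ $n \equiv 0,1,3,4 \pmod 6$, the bound is met exactly by a nonorientable triangular embedding of $K_n$.
\begin{itemize}
\item For these cases we build index-one or index-two current graphs with current group $\Z_n$, or $\Z_{n-s}$ plus a few vortices, following Ringel. The construction must satisfy Kirchhoff's current law, the appropriate vertex conditions, and each nonzero element must appear exactly once as a current.
\item Nonorientability comes from including a \emph{twisted} edge carrying an involution-type current, or a cycle with odd twists. This makes the derived embedding nonorientable while keeping every face a triangle.
\item For small cases we exhibit explicit embeddings.
\end{itemize}
For the remaining residues $n \equiv 2,5 \pmod 6$, the genus is not attained by a triangulation. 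Here we start from a suitable near-triangulation, or from a triangulation of $K_{n-1}$ or $K_{n+1}$ minus a few edges. Then we add a vertex, or add edges, using a handle or crosscap together with edge flips that cost exactly the rounding amount $\lceil (n-3)(n-4)/6 \rceil - (n-3)(n-4)/6$.

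\textbf{Alternative for the upper bound.} Another route is to take a minimum-genus orientable triangulation from Theorem~\ref{thm-mct-orient} and convert it via a crosscap-adding trick. This works because a nonorientable surface of Euler genus $2k+1$ contains an orientable surface of genus $k$ plus a crosscap. Adding one crosscap to a triangular orientable embedding lets us place at most a few extra edges, though, so this route only covers residues where the orientable bound is already tight.

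\textbf{Exceptions.} The excluded value $n = 7$ fails because $K_7$ does not embed in the Klein bottle (Franklin). The cases $n = 5, 6$ are handled directly: $K_5$ in the projective plane, and $K_6$ via the hemi-icosahedron.

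\textbf{Main obstacle.} The hardest step is producing nonorientable current graphs for every residue class. I would first try modifying the known orientable index-$1$ current graphs by twisting an edge whose current has order $2$.
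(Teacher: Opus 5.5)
The paper does not reprove this theorem. It quotes it from Ringel's book, where the upper bound is a long residue-by-residue construction (Ringel's leading permutations, later current graphs of Youngs, Jungerman and Korzhik). Your Euler lower bound and your treatment of $n=5,6,7$ are correct. The upper bound, however, is not established, and both concrete mechanisms you offer fail.

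\textbf{The case $n\equiv 0,1 \pmod 3$.} Here you need a nonorientable triangular embedding of $K_n$, and ``following Ringel'' is a citation, not a construction. Your one specific idea is to twist an edge carrying an order-$2$ current in a known orientable index-one current graph. This does nothing.
\begin{itemize}
\item The single circuit traverses every edge twice, so an internal edge carrying the involution would put it into the log twice, contradicting (C2). Hence the involution sits on an edge to a vertex of degree~$1$ (cf.\ (C4)).
\item Twisting a pendant edge changes nothing. The walk switches behavior on the way out and switches back on the way in, so the circuit, its log, and the opposite-direction traversal of the edge are all unchanged.
\item A derived edge is twisted exactly when its current-graph edge is unidirectional, so you get the same orientable derived embedding.
\end{itemize}
Moreover, for $n\equiv 1,6,9,10 \pmod{12}$ the target $(n-3)(n-4)/6$ is odd. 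So there is no orientable triangulation of $K_n$, and no orientable current graph generating one, to modify at all. What is missing is precisely the hard part: current graphs with unidirectional edges whose circuits still satisfy (C2), designed for each residue class modulo $12$, plus the sporadic small orders.

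\textbf{The case $n\equiv 2 \pmod 3$.} The phrase ``edge flips that cost exactly the rounding amount'' is not meaningful, since that amount is $2/3$ and cannot be paid in crosscaps. The correct bookkeeping is that $\binom{n}{2}\equiv 1 \pmod 3$, so the graph that can triangulate the surface of Euler genus $\tilde{\gamma}(K_n)-1$ is $K_n-K_2$. Given such a triangulation with $u,v$ nonadjacent, pick a common neighbor $w$ whose rotation reads $\ldots,x,u,\ldots,v,y,\ldots$. Reverse the segment $u\ldots v$ and twist those edges; this adds one crosscap, as in Figure~\ref{fig-crossadd}. It merges $[w,x,u]$ and $[w,v,y]$ into the hexagon $(w,x,u,w,y,v)$, and drawing $uv$ there leaves two quadrilaterals. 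This gives $K_n$ in $N_{\tilde{\gamma}(K_n)}$.

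Your ``alternative'' is this idea aimed at the wrong graph. A crosscap added to an orientable genus embedding of $K_n$ lands in Euler genus $2\gamma(K_n)+1\geq \tilde{\gamma}(K_n)+1$, so it overshoots in every residue, including the triangular ones. Applied to orientable triangulations of $K_n-K_2$, which exist for $n\equiv 2,5\pmod{12}$, it is exact. For $n\equiv 8,11 \pmod{12}$, however, you would still have to construct nonorientable triangulations of $K_n-K_2$. Your other option, inserting a vertex into a triangulation of $K_{n-1}$, means attaching a vertex of degree $n-1$ with only $\lceil (n-4)/3\rceil$ new crosscaps. That is an additional-adjacency problem you do not address.
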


A graph is said to be $h$-critical if it has chromatic number $h$ and every proper subgraph is $(h-1)$-colorable. As a consequence of a construction by Fisk \cite{Fisk} (see Corollary 8.4.13 in Mohar and Thomassen \cite{MoharThomassen}), there are infinitely many $5$-critical $g$-embeddable graphs, for all $g \geq 1$. In sharp contrast, Euler's formula implies that when $h \geq 8$, there are finitely many $h$-critical graphs embeddable in each surface: as the number of vertices increases, the average degree approaches $6$, but $h$-critical graphs have minimum degree at least $h-1 \geq 7$. Thomassen \cite{Thomassen-6Critical} further showed that there are finitely many $6$- and $7$-critical $g$-embeddable graphs, for each fixed $g$. These results lead to a natural problem:

\begin{question}
For each closed surface and for each $h \geq 6$, what are the $h$-critical graphs embeddable in that surface? 
\end{question}

For $h = 6$, these graphs are known for the sphere (none, by the five-color theorem \cite{Heawood}), projective plane \cite{AlbertsonHutchinson}, torus \cite{Thomassen-Torus}, and Klein bottle \cite{Klein1,Klein2}. Our focus is on the other end of the spectrum. A result of Dirac \cite{Dirac-MapColor1, Dirac-MapColor2}, with some missing cases supplied by Albertson and Hutchinson \cite{AlbertsonHutchinson}, complements Theorems \ref{thm-mct-orient} and \ref{thm-mct-nonorient} by showing that if a graph is $g$-embeddable and $h(g)$-critical, then it must be the complete graph $K_{h(g)}$.

\v{S}krekovski \cite{Skrekovski} introduced a problem generalizing Dirac's result: what are the $(h(g)-c)$-critical $g$-embeddable graphs, for some fixed integer $c \geq 1$? For sufficiently large $g$, \v{S}krekovski showed that all such graphs have at most $h(g) + 1$ vertices. In particular, when $c = 1$ and $g \geq 5$, $g \neq 6, 9$, \v{S}krekovski ruled out every other graph besides the complete graph $K_{h(g)-1}$ and the graph join $K_{h(g)-4} + C_5$, where $C_5$ is the 5-cycle. The former graph is $g$-embeddable simply because it is a subgraph of $K_{h(g)}$. We show the necessity of the latter graph, which we prefer to write as $K_{h(g)+1}-C_5$, for certain surfaces:

\begin{theorem}
For $n \geq 5$, the orientable genus of $K_n-C_5$ is
$$\gamma(K_n-C_5) = \left\lceil \frac{n^2-7n+2}{12} \right\rceil.$$
\label{thm-main-orient}
\end{theorem}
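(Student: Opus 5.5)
The proof splits into a lower bound from Euler's formula and a matching upper bound by construction.

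\emph{Lower bound.} $K_n-C_5$ has $n$ vertices and $\binom{n}{2}-5$ edges. For $n\ge 5$ every edge lies in a triangle, so any cellular embedding has faces of length at least $3$. Hence $2E\ge 3F$, and Euler's formula $V-E+F=2-2\gamma$ gives
$$\gamma \ge \frac{E-3V+6}{6}=\frac{n(n-1)/2-5-3n+6}{6}=\frac{n^2-7n+2}{12}.$$
Taking the ceiling gives the claimed lower bound. Equality is possible only when the embedding is nearly triangular, with the excess $12\gamma-(n^2-7n+2)$ measuring how far it is from a triangulation. This excess depends on $n \bmod 12$.

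\emph{Upper bound, general strategy.} For each residue of $n$ modulo $12$, I would produce an orientable embedding of $K_n-C_5$ whose face-length excess matches the ceiling. I would use two mechanisms.

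\begin{enumerate}
\item \emph{Compare with $\gamma(K_n)$ from Theorem \ref{thm-mct-orient}.} The two formulas are $\lceil (n^2-7n+12)/12\rceil$ for $K_n$ and $\lceil (n^2-7n+2)/12\rceil$ for $K_n-C_5$. These frequently differ by exactly $1$; they agree only when $n^2-7n\equiv 10$ or $11 \pmod{12}$ modulo rounding. When they differ by $1$, take a triangular or near-triangular embedding of $K_n$ in which the $5$-cycle is arranged favourably. For example, the five vertices and their cycle edges may sit in a small configuration whose removal merges faces. Deleting the five $C_5$ edges and adding a handle to reroute (or simply deleting edges) should reduce the genus by one. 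Concretely, deleting the $5$ edges of the cycle merges triangles into larger faces. Then re-adding a handle's worth of savings requires restructuring, so this direct route is subtle.
\item \emph{Current graph constructions.} The cleaner approach, consistent with the abstract, is to build the embedding directly via index-$1$ or index-$k$ current graphs. Construct a current graph over $\Z_{n-5}$, or a similar cyclic group, generating a triangular embedding of $K_{n-5}$ plus vortex vertices. Five extra vertices are then attached to all of $K_{n-5}$ and to each other except along a $5$-cycle. Equivalently, $K_{n-5}+ \overline{C_5}$, with $\overline{C_5}\cong C_5$, is $K_{n-5}$ joined to a $5$-cycle, matching the $K_{h-4}+C_5$ form. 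The vortices at the five extra vertices, with the $5$-cycle edges among them, yield a (near-)triangular embedding.
\end{enumerate}

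For each residue class I would give a family of current graphs, adding a few handles or edges to fix up nontriangular faces to hit the ceiling exactly. Small $n$ would be handled by explicit rotation systems found by hand or computer.

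\emph{Main obstacle.} The hard step is the residue classes where $(n^2-7n+2)/12$ is an integer, for example $n\equiv 2,5 \pmod{12}$ up to checking. These require a genuine triangulation of $K_n-C_5$, so the current graphs must produce exactly five vortices whose rotations realize the complement-of-$C_5$ pattern among themselves. I would first try adapting Ringel's classical current graphs for the corresponding Map Color Theorem cases. The idea is to modify the vortex arcs so that the five vortex vertices become adjacent along $\overline{C_5}$ through a local "global adjacency" step that adds handles. Finding these families, along with the sporadic small cases, is where I expect most of the work.
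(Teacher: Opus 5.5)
Your lower bound is correct, though face length at least $3$ comes from simplicity, not from every edge lying in a triangle (that fails for $n=5$, where $K_5-C_5\cong C_5$). The upper bound has a genuine gap. You give no construction for any residue class, and the plan targets the wrong cases and an object that cannot exist.

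Since $n^2-7n+2\equiv n(n-1)+2\not\equiv 0 \pmod 3$, the quantity $(n^2-7n+2)/12$ is \emph{never} an integer, so $K_n-C_5$ never triangulates a surface. Your ``main obstacle'' (a genuine triangulation of $K_n-C_5$ from five vortices for $n\equiv 2,5\pmod{12}$) therefore cannot be achieved. In fact $n\equiv 2,5$ are among the easy residues.

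Your comparison with $\gamma(K_n)$ is also wrong. Write $m=n^2-7n$; this is always $\equiv 0,2,6,8\pmod{12}$. Then $\gamma(K_n-C_5)=\gamma(K_n)$ exactly when $m\equiv 0$, i.e.\ $n\equiv 0,3,4,7\pmod{12}$. These are precisely the cases where $K_n$ triangulates, so simply deleting the five edges already gives a genus embedding. In every other case $\gamma(K_n-C_5)=\gamma(K_n)-1$.

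The missing idea is to use the slack in the ceiling by triangulating a \emph{supergraph}. Suppose $K_n-H$, with $H\subseteq C_5$, is $6$-edge-connected and has a triangular orientable embedding. Deleting the remaining $5-|E(H)|$ cycle edges does not change the Euler lower bound, because $\lceil -5/6\rceil=0$, so the result is a genus embedding of $K_n-C_5$. Triangularity forces $2|E(H)|\equiv m\pmod{12}$, which gives these targets:
\begin{itemize}
\item $H=\emptyset$ for $n\equiv 0,3,4,7$;
\item $H=K_2$ for $n\equiv 2,5$, using known triangulations of $K_n-K_2$;
\item $H=P_3$ or $K_2\cup P_2$ for $n\equiv 1,6,9,10$;
\item $H=P_4$ for $n\equiv 8,11$.
\end{itemize}

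The real work, which your proposal does not touch, is in the last two classes. There, current-graph triangulations must be modified by carefully chosen edge flips so that the missing edges lie along a $5$-cycle rather than forming, say, $K_3$ or $K_2\cup P_3$. The starting triangulations are:
\begin{itemize}
\item embeddings of $K_n-K_3$;
\item embeddings obtained by amalgamating the two vertices of a type (V2) vortex;
\item embeddings of $K_n-K_5$ with five type (V1) vortices, followed by one handle. This is the only place something like your second mechanism appears, and even there the outcome is a triangulation of $K_n-P_4$, not of $K_n-C_5$.
\end{itemize}

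You also need to see that this route provably fails for two small orders and handle them separately. $K_8-P_4$ does not embed in the torus, so a toroidal $K_8-C_5$ must be taken directly from Duke and Haggard. There is no $(9,3)$-triangulation of $S_2$ (Huneke), so $K_9-C_5$ needs an ad hoc genus-$2$ embedding.
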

\begin{theorem}
For $n \geq 7$, the nonorientable genus of $K_n-C_5$ is
$$\tilde{\gamma}(K_n-C_5) = \left\lceil \frac{n^2-7n+2}{6} \right\rceil.$$
\label{thm-main-nonorient}
\end{theorem}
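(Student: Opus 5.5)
The lower bound comes directly from Euler's formula. The graph $K_n-C_5$ has $n$ vertices and $\binom{n}{2}-5$ edges. Any cellular embedding in a surface of Euler genus $g$ satisfies $2E \geq 3F$, so $V - E + F = 2-g$ gives
$$g \geq 2 - n + \tfrac{1}{3}\left(\tbinom{n}{2}-5\right) = \frac{n^2-7n+2}{6}.$$
Rounding up to an integer gives the claimed lower bound for the nonorientable genus $\tilde{\gamma}$. (The analogous computation, halved and rounded, gives the orientable bound in Theorem~\ref{thm-main-orient}.) All the work is therefore in constructing embeddings of $K_n-C_5$ in the nonorientable surface with exactly $\lceil (n^2-7n+2)/6\rceil$ crosscaps, for every $n\geq 7$.

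Write $n^2-7n+2 = (n-3)(n-4)-10$, so the target genus is $\lceil((n-3)(n-4)-10)/6\rceil$. This is roughly $\tilde\gamma(K_n)$ minus $\lceil 10/6\rceil$, adjusted by residue. Equivalently, deleting the five edges of a $5$-cycle should save up to $5/3$ crosscaps, i.e.\ one or two depending on $n \bmod 6$.

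The natural strategy is to split by residue class of $n$ modulo $6$ and work from triangular or near-triangular embeddings of $K_n$. When $(n-3)(n-4)\equiv 0 \pmod 6$, i.e.\ $n\equiv 0,1,3,4 \pmod 6$, the graph $K_n$ triangulates a nonorientable surface (by Theorem~\ref{thm-mct-nonorient} and Ringel's constructions, excluding $n=7$). In those cases I would try to remove the five edges of a $5$-cycle from a triangulation and then reduce genus. Removing a single edge merges two triangles into a quadrilateral. If the $5$-cycle's edges and their incident faces are arranged suitably, for example all incident to a common region built from the current graph, then these quadrilaterals can be merged and the surface rearranged so that two crosscaps are saved. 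The concrete mechanism is to delete the five edges, then add back a handle-free ``retriangulation'' that decreases the Euler genus by $2$ while still covering all other edges.

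A cleaner route, which I would attempt first, is to build the embedding directly with index-$1$ or index-$2$ current graphs over $\Z_{n-5}$ or $\Z_{n-k}$. Here the cyclic part realizes $K_{n-5}$ plus the five special vertices, and the five special vertices correspond to vortices. The current graph is designed so that the five vortex vertices are joined to everything except along a $5$-cycle, with the rotation at those vertices determined by the log. This is the same technology that yields minimum triangulations, and the abstract indicates the paper develops exactly this.

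For the other residues, $n\equiv 2,5\pmod 6$, the formula leaves slack, and a non-triangular face or two is allowed. There I would modify the current graphs accordingly, or add a vertex to a solution for a smaller $n$ and patch the embedding.

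I expect the main obstacle to be producing, for each residue class, an infinite family of current graphs whose vortex structure gives exactly the missing $5$-cycle. The small cases, roughly $n\le 15$ where the general families break down, would also need to be handled by hand or by computer-found rotation systems. The case $n=7$ deserves a separate check: the bound there is $\lceil 2/6\rceil=1$, and $K_7-C_5$ on the projective plane should be given explicitly.
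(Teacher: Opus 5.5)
\textbf{Verdict: genuine gap.} Your Euler lower bound is correct, but the upper bound is never established, and that is the whole content of the theorem. The one quantitative claim in your sketch is also wrong.
\begin{itemize}
\item \textbf{Wrong crosscap count.} For $n\equiv 0,1 \pmod 3$ ($n\neq 7$), $K_n$ triangulates and the target is $\tilde{\gamma}(K_n)-1$, not $\tilde{\gamma}(K_n)-2$. Deleting five edges from a triangulation of Euler genus $g_0$ lowers the lower bound only to $\lceil g_0-5/3\rceil = g_0-1$. So a ``retriangulation that decreases the Euler genus by 2'' would violate the bound you just proved.
\item \textbf{Residues backwards.} For $n\equiv 2\pmod 3$ the target is $\tilde{\gamma}(K_n)-2$. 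There $K_n-C_5$ has exactly one edge fewer than a triangulation of the target surface, so the embedding must be triangular except for a single quadrilateral. This is the \emph{tightest} case, not the one with slack. For $n\equiv 0,1\pmod 3$ it has two edges fewer.
\item \textbf{Vortices misdescribed.} In your ``cleaner route,'' lettered vertices coming from vortices are never adjacent to one another in a derived embedding. Five vortices therefore give at best $K_n-K_5$, and the five edges of the complementary $5$-cycle must be added afterwards at a genus cost you do not control. In any case, no current graph family is actually proposed.
\end{itemize}

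\textbf{The missing idea.} The paper's first step is the reduction of Proposition \ref{prop-del}: deleting up to two edges from a nonorientable triangular embedding of a $3$-edge-connected graph yields a genus embedding. So it suffices to triangulate a supergraph of $K_n-C_5$ with
\begin{itemize}
\item two extra edges ($K_n-P_3$ or $K_n-(K_2\cup P_2)$) when $n\not\equiv 2\pmod 3$, or
\item one extra edge ($K_n-P_4$) when $n\equiv 2\pmod 3$.
\end{itemize}

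\textbf{Case $n\not\equiv 2 \pmod 3$.} The paper gets $K_n-P_3$ (Corollary \ref{cor-non-p3}) by a single \emph{crosscap subtraction} (Construction \ref{con-subtract}) on a triangular embedding of $K_n$. It deletes a path $[a,c,b,d]$ sitting in a suitable configuration of four faces around a vertex $p$, reverses the segment $c\dotsc b$ in the rotation at $p$, and switches the signatures of $(p,c),\dotsc,(p,b)$. This keeps the embedding triangular and lowers the Euler genus by exactly one. Such a configuration is guaranteed by a broken 2-ladder in the current graph (Lemma \ref{lem-broken}), or is found ad hoc in small cases.

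\textbf{Case $n\equiv 2\pmod 3$.} The paper builds $K_n-P_4$ residue by residue modulo $12$ from explicit current graph families. The tools are crosscap subtraction, amalgamation of a (V2) vortex (Construction \ref{con-con8}), edge flips that turn the missing edges into a single path, a twisted handle, and the surgical induction of Lemma \ref{lem-induction} for $n\equiv 2\pmod{12}$.

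\textbf{Small cases.} The paper handles these separately:
\begin{itemize}
\item $K_7-C_5$: take $K_6$ in $N_1$, delete an edge, and subdivide the resulting quadrilateral.
\item $K_8-P_4$: a triangular embedding in the Klein bottle.
\item $n=11$: an explicit rotation system.
\end{itemize}

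Without constructions of this kind, your proposal proves only $\tilde{\gamma}(K_n-C_5)\ge\lceil(n^2-7n+2)/6\rceil$.
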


\v{S}krekovski \cite[Proposition 3.2]{Skrekovski} showed that these genus formulas imply the following:

\begin{corollary}
$K_{h(g)+1}-C_5$ is embeddable in the orientable surface of Euler genus $g \geq 2$ if and only if $g = 2(\gamma(K_{h(g)+1})-1)$ and $h(g) \not\equiv 2, 3, 6, 11 \pmod{12}$. 
\end{corollary}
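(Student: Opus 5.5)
Write $n = h(g)+1$. By Theorem \ref{thm-main-orient}, $K_n - C_5$ embeds in the orientable surface of Euler genus $g$ if and only if
$$g \geq 2\left\lceil \frac{n^2-7n+2}{12}\right\rceil.$$
So the plan is to compare this threshold with the range of $g$ for which $h(g)=n-1$. The corollary then reduces to arithmetic in $n$ modulo $12$.

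\textbf{Step 1: the range of $g$ with $h(g)=n-1$.} We have $h(g)\ge n-1$ iff $H(g)\ge n-1$, i.e. $\sqrt{1+24g}\ge 2n-9$, i.e. $24g \ge (2n-9)^2-1 = 4(n-4)(n-5)$. This is equivalent to
$$g \ge \frac{(n-4)(n-5)}{6}.$$
Similarly, $h(g)\le n-1$ iff $H(g)<n$, i.e.
$$g < \frac{(n-3)(n-4)}{6}.$$
For orientable surfaces $g$ is even, say $g=2k$. The condition $h(g)=n-1$ then says that $k$ lies in the range
$$\left\lceil \frac{(n-4)(n-5)}{12}\right\rceil \le k < \frac{(n-3)(n-4)}{12}.$$
By Theorem \ref{thm-mct-orient}, the upper condition means $k \le \gamma(K_n)-1$. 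Indeed $k < \frac{(n-3)(n-4)}{12}$ iff $k < \gamma(K_n)$, since $k$ is an integer.

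\textbf{Step 2: embeddability forces $g = 2(\gamma(K_n)-1)$.} Embeddability requires $k \ge \lceil (n^2-7n+2)/12\rceil$. Now
$$(n-3)(n-4) - (n^2-7n+2) = 10,$$
so
$$\frac{n^2-7n+2}{12} = \frac{(n-3)(n-4)}{12}-\frac{10}{12}.$$
Hence the admissible $k$ must satisfy
$$\frac{(n-3)(n-4)-10}{12} \le k < \frac{(n-3)(n-4)}{12}.$$
This is a half-open interval of length $10/12<1$, so it contains at most one integer. Any admissible integer $k$ is below $\gamma(K_n)$ and at least $\gamma(K_n)-1$ (the interval lies within one unit below the ceiling), so the only candidate is $k=\gamma(K_n)-1$. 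This proves the "only if" direction for the first condition.

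\textbf{Step 3: which residues work.} The value $k=\gamma(K_n)-1$ actually occurs as an admissible $k$ if and only if two things hold:
\begin{itemize}
\item[(i)] $\gamma(K_n)-1 \ge \lceil (n^2-7n+2)/12\rceil$;
\item[(ii)] $h(2k)=n-1$, i.e. $k\ge \lceil (n-4)(n-5)/12\rceil$.
\end{itemize}
Condition (ii) is automatically implied by (i), because $n^2-7n+2 \ge n^2-9n+20$ for $n\ge 9$; small $n$ can be checked directly.

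Condition (i) says that $\lceil m/12\rceil - 1 \ge \lceil (m-10)/12\rceil$, where $m=(n-3)(n-4)$. This holds exactly when $m \bmod 12 \in \{1,\dots,10\}$, i.e. $m\not\equiv 0,11 \pmod{12}$. Since $m$ is a product of consecutive integers it is even, so the only obstruction is $m\equiv 0 \pmod{12}$.

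Computing $(n-3)(n-4)\bmod 12$ for $n \bmod 12$ shows that $m \equiv 0$ exactly when $n\equiv 3,4,7,0 \pmod{12}$. In terms of $h=n-1$, this is $h\equiv 2,3,6,11 \pmod{12}$, which is the stated exclusion.

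\textbf{Main obstacle.} The only delicate point is the boundary bookkeeping: strict versus non-strict inequalities in $h(g)=\lfloor H(g)\rfloor$, and small $g$, where $g\ge 2$ and the hypothesis $n\ge 5$ in Theorem \ref{thm-main-orient} matter. I would finish by verifying the few small $n$ by hand.
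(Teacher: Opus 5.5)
Your proposal is correct and follows essentially the same route as the paper, which gives no proof of its own and instead cites \v{S}krekovski's Proposition 3.2 for deriving the corollary from Theorem \ref{thm-main-orient}. That derivation is what you carry out: find the range $(n-4)(n-5)/6 \le g < (n-3)(n-4)/6$ where $h(g)=n-1$, use $(n-3)(n-4)-(n^2-7n+2)=10$ to force $g/2=\gamma(K_n)-1$, and check that this value suffices exactly when $(n-3)(n-4)\not\equiv 0 \pmod{12}$, i.e.\ $h(g)\not\equiv 2,3,6,11 \pmod{12}$. Your condition (ii), and the small-$n$ check attached to it, are unnecessary: $n$ is defined as $h(g)+1$, so (ii) holds automatically.
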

\begin{corollary}
$K_{h(g)+1}-C_5$ is embeddable in the nonorientable surface of Euler genus $g \geq 2$ if and only if either
\begin{itemize}
\item $g = \tilde{\gamma}(K_{h(g)+1})-1$, or
\item $g = \tilde{\gamma}(K_{h(g)+1})-2$ and $h(g) \equiv 1 \pmod{3}$.
\end{itemize}
\end{corollary}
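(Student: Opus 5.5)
The corollary is stated as a consequence of Theorem~\ref{thm-main-nonorient}, so the plan is arithmetic. Write $n = h(g)+1$. The graph $K_n - C_5$ embeds in the nonorientable surface of Euler genus $g$ exactly when $g \geq \tilde{\gamma}(K_n - C_5)$. Here nonorientable embeddings can always be moved to higher nonorientable genus by adding crosscaps. The Euler genus $g$ equals the crosscap number in the nonorientable case, so we need to compare $g$ with $\lceil (n^2-7n+2)/6 \rceil$.

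First I will translate the definition of $h(g)$ into inequalities on $g$. Since $h = \lfloor H(g)\rfloor$, we have $h \le (7+\sqrt{1+24g})/2 < h+1$. This is equivalent to
$$\frac{(h-3)(h-4)}{6} \le g < \frac{(h-2)(h-3)}{6},$$
so $g \le \lceil (h-2)(h-3)/6\rceil - 1 = \tilde{\gamma}(K_{h+1}) - 1$, using Theorem~\ref{thm-mct-nonorient}. The hypothesis $g \ge 2$ gives $h \ge 7$ and $n \ge 8$, so Theorem~\ref{thm-main-nonorient} applies. The small exception $n=7$ in Theorem~\ref{thm-mct-nonorient} does not arise, and I would double-check the boundary case $h=7$ separately.

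Next I compare thresholds. With $n = h+1$, $n^2 - 7n + 2 = (h-2)(h-3) - 6 + 2 = (h-2)(h-3) - 4$. So the embedding condition reads $g \ge \lceil ((h-2)(h-3)-4)/6\rceil$. Combined with $g \le \tilde{\gamma}(K_{h+1}) - 1 = \lceil (h-2)(h-3)/6\rceil - 1$, I do a case analysis on $(h-2)(h-3) \bmod 6$. This quantity is always even, so it is $0$, $2$, or $4$ mod $6$. If it is $\equiv 0$, then $\lceil (x-4)/6\rceil = x/6$ while $\lceil x/6\rceil - 1 = x/6 - 1$, so the two bounds are incompatible and no $g$ works. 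I expect this case to be consistent with the statement, since then $\tilde\gamma(K_{h+1})-1$ and $\tilde\gamma(K_{h+1})-2$ both fall below the threshold, and I will check that the listed conditions also fail in it. If $x\equiv 2$, the threshold is $\lceil x/6\rceil - 1$, so only $g = \tilde\gamma(K_{h+1})-1$ works. If $x \equiv 4$, the threshold is $\lceil x/6\rceil - 1$ again, so only $g = \tilde\gamma(K_{h+1})-1$ works. Separately, I must still check whether $g = \tilde\gamma(K_{h+1})-2$ can satisfy the lower bound $g \ge \lceil (h-3)(h-4)/6\rceil$ from the definition of $h(g)$ together with the embedding threshold.

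The main obstacle is getting this residue bookkeeping exactly right. That means matching which residues of $h$ mod $3$ (equivalently, of $(h-2)(h-3)$ mod $6$) allow $g = \tilde\gamma(K_{h+1})-1$ or $\tilde\gamma(K_{h+1})-2$ while still having $h(g) = h$. I expect the condition $h\equiv 1 \pmod 3$ to be exactly the residue where $(h-2)(h-3)\equiv 2 \pmod 6$, so that $\lceil (x-4)/6\rceil = \lceil x/6\rceil - 1$ and the ceiling gives slack. I will tabulate $h \bmod 3$ against the three quantities $\lceil (h-3)(h-4)/6\rceil$, $\lceil ((h-2)(h-3)-4)/6\rceil$, and $\lceil (h-2)(h-3)/6\rceil$ and read off the admissible $g$. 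If the table disagrees with the statement, I will recheck the derivation of $g < (h-2)(h-3)/6$ from $\lfloor H(g)\rfloor = h$, and whether the threshold should use $\tilde\gamma(K_n-C_5)$ with $n=h+1$.
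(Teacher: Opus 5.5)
Your overall route is the right one. The paper gives no argument of its own; it cites \v{S}krekovski's Proposition 3.2, which is exactly this arithmetic from the genus formula. Your derivation of $(h-3)(h-4)/6 \le g < (h-2)(h-3)/6$, and hence $g \le \tilde{\gamma}(K_{h+1})-1$, is also correct.

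The gap is an algebra slip at the central step. With $n=h+1$,
\[ n^2-7n+2=(n-3)(n-4)-10=(h-2)(h-3)-10, \]
not $(h-2)(h-3)-4$. Deleting the five edges of $C_5$ lowers $|E|-3|V|+6$ by $5$, so the numerator over $6$ drops by $10$. With the wrong constant, your case analysis contradicts the corollary in both cases:
\begin{itemize}
\item For $x=(h-2)(h-3)\equiv 0 \pmod 6$ you conclude that no $g$ works. But the first bullet of the statement is unconditional: it asserts embeddability at $g=\tilde{\gamma}(K_{h+1})-1$ for every $h$. So calling this outcome ``consistent with the statement'' is a misreading.
\item For $x\equiv 2 \pmod 6$ you get only $g=\tilde{\gamma}(K_{h+1})-1$, which loses the second bullet. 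The equality $\lceil (x-4)/6\rceil=\lceil x/6\rceil-1$ that you call ``slack'' actually excludes $\tilde{\gamma}(K_{h+1})-2$.
\end{itemize}
The rechecks you plan (the bound $g<(h-2)(h-3)/6$, or the choice $n=h+1$) would not locate the problem, since both are correct.

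With the correct constant the argument closes at once. Since $x$ is even and $x\equiv h(h+1) \pmod 3$, we have $x\equiv 0 \pmod 6$ when $h\equiv 0,2 \pmod 3$, and $x\equiv 2\pmod 6$ when $h\equiv 1\pmod 3$. The residue $4$ never occurs.
\begin{itemize}
\item In the first case, $\tilde{\gamma}(K_{h+1})=x/6$ and $\lceil (x-10)/6\rceil = x/6-1=\tilde{\gamma}(K_{h+1})-1$.
\item In the second case, $\tilde{\gamma}(K_{h+1})=(x+4)/6$ and $\lceil (x-10)/6\rceil=(x-8)/6=\tilde{\gamma}(K_{h+1})-2$.
\end{itemize}
Now combine $\tilde{\gamma}(K_{h+1}-C_5)\le g\le \tilde{\gamma}(K_{h+1})-1$. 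Here $g\ge 2$ forces $h\ge 7$, so Theorems \ref{thm-mct-nonorient} and \ref{thm-main-nonorient} both apply, and the combination yields exactly the two bullets in both directions. You do not need the lower bound $g\ge (h-3)(h-4)/6$ for the equivalence, because $h=h(g)$ is part of the hypothesis. That check only shows the second bullet is nonvacuous, which holds since $2(h-3)\ge 8$ for $h\ge 7$.
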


Theorems \ref{thm-main-orient} and \ref{thm-main-nonorient} are proven in Sections \ref{sec-orient} and across Sections \ref{sec-min} and \ref{sec-nonorient}, respectively. We also determine the $(h(g)-1)$-critical graphs for Euler genus $g = 6, 9$ in Section \ref{sec-skrek69}. In the following result, the graphs $H_7$ and $M_7$ are the two 4-critical graphs on 7 vertices, shown in Figure \ref{fig-4crit}:

\begin{figure}[htbp]
\centering
\includegraphics[scale=1]{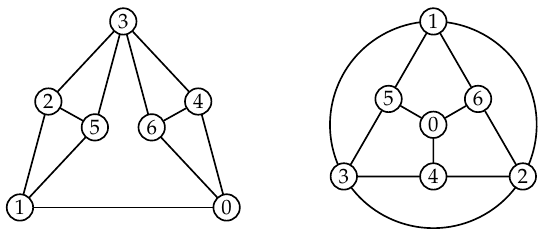}
\caption{The 4-critical graphs $H_7$ and $M_7$ on seven vertices.}
\label{fig-4crit}
\end{figure}

\begin{theorem}
The $8$-critical graphs embeddable in the surfaces of Euler genus 6 are $K_8$, $K_5 + C_5$, and $K_4 + H_7$. The $9$-critical graphs embeddable in the nonorientable surface of Euler genus 9 are $K_9$, $K_6 + C_5$, $K_5 + H_7$, and $K_5 + M_7$. 
\label{thm-69}
\end{theorem}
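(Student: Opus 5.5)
Both statements split into an exclusion part and an existence part.

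For Euler genus $g=6$ we have $H(6)=(7+\sqrt{145})/2\approx 9.52$, so $h=9$ and we want $8$-critical graphs. For $g=9$ we have $H(9)=(7+\sqrt{217})/2\approx 10.87$, so $h=10$ and we want $9$-critical graphs. In each case Euler's formula gives $|E|\le 3(|V|-2+g)$. An $(h-1)$-critical graph has minimum degree at least $h-2$. Together these bound $|V|$: for $g=6$, $7|V|/2\le 3|V|+12$ gives $|V|\le 24$. Following \v{S}krekovski's argument, combined with the Gallai and Dirac bounds on edges of critical graphs, we sharpen this to $|V|\le h+1$, or whatever small slack his analysis leaves at $g=6,9$.

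\textbf{Exclusion.} The basic tool is Gallai's theorem: a $k$-critical graph on at most $2k-2$ vertices has a disconnected complement, so it is a join $G_1+G_2$ of two smaller critical graphs whose critical numbers sum to $k$. Iterating, an $8$-critical graph on $n\le 11$ vertices is a join $K_a + F$, where $F$ has no dominating vertex. With $n-a$ vertices and chromatic number $8-a$, the piece $F$ is itself a join of critical graphs with connected complements.

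Taking $g=6$, $n\le 11$, the candidates are:
\begin{itemize}
\item $n=8$: $K_8$.
\item $n=10$: $K_5+C_5$.
\item $n=11$: $K_4+H_7$, $K_4+M_7$, $K_2+C_5+C_5$, $K_3+$(a $5$-critical graph on $8$ vertices with connected complement), $K_1+$(a $7$-critical graph on $10$ vertices), and so on.
\end{itemize}
Gallai's edge-count bound already eliminates many of these. For each survivor we count edges against $3(n+4)=45$ when $n=11$. For example, $K_4+M_7$ has one more edge than $K_4+H_7$ (if $H_7$ is the sparser one), and $K_2+2C_5$ has $1+20+10+25=56>45$. The aim is to leave exactly the listed graphs plus at most one or two borderline cases.

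For $g=9$ (nonorientable only) the bound is $3(n+7)$, and the analogous list up to $n=12$ should leave $K_9$, $K_6+C_5$, $K_5+H_7$ and $K_5+M_7$.

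The borderline graphs whose edge counts fit but which are not in the list are the main obstacle. Here I would use a refined face-counting argument. If the bound is met with equality the embedding must be a triangulation, so every edge lies in two triangles and vertex links are cycles. I would then show directly that no such triangulation exists, for instance because some vertex's neighborhood contains no Hamiltonian cycle within the graph. For $K_4+M_7$ at $g=6$, this would also explain why it is excluded for the orientable and nonorientable surfaces alike.

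\textbf{Existence.} We have $K_8$ from Theorems~\ref{thm-mct-orient} and~\ref{thm-mct-nonorient}. Since $K_5+C_5=K_{10}-C_5$, Theorems~\ref{thm-main-orient} and~\ref{thm-main-nonorient} give $\lceil 32/12\rceil=3$ handles (Euler genus $6$) and Euler genus $\lceil 32/6\rceil=6$. For $g=9$, $K_6+C_5=K_{11}-C_5$ has nonorientable genus $\lceil 46/6\rceil=8\le 9$. The remaining graphs $K_4+H_7$, $K_5+H_7$ and $K_5+M_7$ need explicit embeddings. I would build them from a near-triangular embedding of $K_{n}$ minus a small edge set, using rotation systems or a small current graph, and then verify the Euler characteristic by a face count.
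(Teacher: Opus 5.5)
\textbf{There is a genuine gap in the exclusion half.} You never rule out critical graphs with more than $11$ vertices for $g=6$, or more than $12$ for $g=9$. Deferring to ``\v{S}krekovski's argument'' cannot work, because his calculation is exactly what fails at $g=6,9$. The sharpening to $|V|\le h+1$ you hope for is also false here, since $K_4+H_7$ has $h(6)+2=11$ vertices.

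The tools you name do not close the gap either. Gallai's bound (Theorem~\ref{thm-gallai}) only applies for $|V|\le 2h-1$, which is $|V|\le 15$ for $8$-critical graphs. Dirac's bound $2|E|\ge 7|V|+5$, set against $2|E|\le 6|V|+24$, only gives $|V|\le 19$. That leaves $16\le |V|\le 19$ open for $g=6$, and $|V|=18$ for $g=9$. The missing ingredient is the Kostochka--Stiebitz bound (Theorem~\ref{thm-ks}), which for $G\neq K_h$, $G\notin\mathcal{E}_h$ forces $|V|\le 14$ (resp.\ $15$). It leaves only $\mathcal{E}_8$ on $15$ vertices (resp.\ $\mathcal{E}_9$ on $17$).

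The borderline cases are also not the ones you flag: $K_4+M_7$ simply has $46>45$ edges. The real ones are these:
\begin{itemize}
\item $|V|=14$ at $g=6$. Gallai's bound meets the Euler bound with equality, so $G=K_1+D$ with $D\in\mathcal{D}_7$ and the embedding must be triangular. This is impossible because the link of a lonely vertex of $D$ has the $K_1$ as a cut vertex, so it has no Hamiltonian cycle.
\item The $\mathcal{E}_h$ cases. $\mathcal{D}_8$ graphs have $55$ edges against an Euler bound of $57$, and $\mathcal{D}_9$ graphs have $71$ against $72$. You must show each lonely vertex costs at least two edges of slack. For $\mathcal{D}_8$ you need both lonely vertices to get four, and Proposition~\ref{prop-edgecount} handles the other members of $\mathcal{E}_h$.
\end{itemize}
The remaining sizes, $|V|=12,13$ for $g=6$ and $13$--$16$ for $g=9$, fall to Gallai's inequality alone. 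Your local-Hamiltonicity idea is the right tool, but it has to be aimed at these cases.

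\textbf{Smaller slips.} $K_2+C_5+C_5$ has $12$ vertices, not $11$; it is still excluded, since $56>48$. There is no $5$-critical graph on $8$ vertices with connected complement, by Gallai's join theorem, because $8\le 2\cdot 5-2$. At $n=11$ the clean statement is Proposition~\ref{prop-kp3}, and $K_4+M_7$ then dies on edge count alone.

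\textbf{Existence.} Promising ``rotation systems or a small current graph'' is not yet a proof. $K_4+H_7$ has exactly $3\cdot 11+12=45$ edges, so you need triangular embeddings in both $S_3$ and $N_6$. $K_5+M_7$, with $57$ edges, must triangulate $N_9$. The paper supplies these as explicit rotation systems. For $K_5+H_7$ it triangulates $N_9$ with $K_5+H_7$ plus one extra edge.
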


The Heawood number also appears in the \emph{minimum triangulations} problem: every simple triangulation in a surface of Euler genus $g$ has at least $\lceil H(g) \rceil$ vertices, and, except for some small cases, there are tight examples in the form of complete graphs with a small number of edges deleted. The main technique used in our proof of Theorem \ref{thm-main-nonorient} also gives a new and simpler construction for minimum triangulations for each nonorientable surface. 

Ma and Ren \cite{Ma-Cycle} showed that the orientable genus of $C_m+K_n$ is $\lceil(m-2)(n-2)/4\rceil = \gamma(K_{m,n})$ for $m \geq 6n-13$. Roughly speaking, when $m$ is large compared to $n$, one expects that there exists a special genus embedding of the complete bipartite graph $K_{m,n}$ where the remaining edges can be added as diagonals of the (non-triangular) faces. Ellingham, Stephens, and Zha \cite{Ellingham-Tripartite,EllinghamStephens} observe a similar phenomenon for other joins, like $E_m+K_n$ and $E_m + K_{n,p} = K_{m,n,p}$, where $E_m$ denotes the empty graph on $m$ vertices. Since we are dealing with a small, fixed value of $m$, our constructions are similar to those of genus embeddings of complete graphs, which utilize the theory of current graphs. 

\section{Background}

The notation $K_n-H$ denotes the graph formed by taking the complete graph $K_n$ and deleting the edges of a copy of $H$. We use $P_m$ to denote the path graph on $m$ edges. The disjoint union of two graphs $G$ and $H$ is written as $G \cup H$. 

The classification of closed surfaces states that the closed orientable surfaces are the $k$-holed tori, denoted $S_k$, for each $k \geq 0$, and the closed nonorientable surfaces are spheres with $\ell$ crosscaps, denoted $N_\ell$, for $\ell \geq 1$. The Euler characteristic of $S_k$ and $N_\ell$ are $2-2k$ and $2-\ell$, respectively, hence their Euler genus is $2k$ and $\ell$, respectively. In this work, all embeddings $\phi\colon G \to \Sigma$, where $\Sigma$ is one of the aforementioned surfaces, are cellular, i.e., the connected components of $\Sigma\setminus\phi(G)$, which we call \emph{faces}, are homeomorphic to open disks. Let $F$ denote the set of faces of $\phi$. Cellular embeddings satisfy the \emph{Euler polyhedral formula}
$$|V(G)|-|E(G)|+|F| = \chi = 2-g,$$
where $g$ is the Euler genus of $S$. 

Each face can be thought of as an $f$-sided polygon, and we describe a face using a cyclic sequence $[v_1, v_2, \dotsc, v_f]$ of the vertices incident with the face. We call $f$ the \emph{length} of the polygon. If $f = 3$, we say the face is \emph{triangular}, and if $f = 4$, we call the face \emph{quadrangular}. An embedding is said to be \emph{triangular} if every face is triangular. In any embedding of a simple graph on at least three vertices, each face has length at least 3, which implies a standard consequence of the Euler polyhedral formula:

\begin{proposition}
A simple connected graph $G$ on at least 3 vertices embedded in a surface of Euler genus $g$ has at most $3|V(G)|-6+3g$ edges, with equality if and only if the embedding is triangular. 
\label{prop-eulerupper}
\end{proposition}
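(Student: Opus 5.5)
The plan is the standard double-counting argument combined with the Euler polyhedral formula. Let $\phi\colon G\to\Sigma$ be a cellular embedding of $G$, with $V=|V(G)|$, $E=|E(G)|$, and $F$ the set of faces. For each face $f\in F$ write $\ell(f)$ for its length, meaning the number of edge-sides traversed by its boundary walk.

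The first step is to note that every edge contributes exactly two sides to face boundaries: it appears twice in one face or once in each of two faces. Hence
$$\sum_{f\in F}\ell(f)=2E.$$
The second step is the claim that $\ell(f)\ge 3$ for every face. This is where simplicity and $V\ge 3$ are used, and it is the only point needing care.
\begin{itemize}
\item A face of length $1$ would be bounded by a loop.
\item A face of length $2$ would be bounded by a pair of parallel edges, or by a single edge traversed twice. The latter occurs only when $G$ is a single edge, which $V\ge 3$ and connectivity rule out.
\item A face bounded by a walk that traverses an edge back and forth still has length at least $3$ once $G$ has at least two edges; for example, a tree with $E\ge 2$ has one face of length $2E\ge 4$.
\end{itemize}
Summing the bound $\ell(f)\ge 3$ over all faces gives $3|F|\le 2E$.

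The third step substitutes this into the Euler polyhedral formula $V-E+|F|=2-g$. This gives $2-g\le V-E+2E/3$, which rearranges to $E\le 3V-6+3g$.

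For the equality statement, observe that equality holds exactly when $3|F|=2E$, that is, when every face has length exactly $3$. By definition this means the embedding is triangular; conversely, a triangular embedding forces equality throughout the chain of inequalities.

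I expect no real obstacle here. The only subtle step is justifying $\ell(f)\ge 3$ when face boundaries are closed walks that may repeat vertices or edges, and the case analysis in the second step handles this.
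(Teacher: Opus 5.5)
Your proof is correct and follows the same argument the paper relies on: the paper observes that every face of an embedding of a simple graph on at least three vertices has length at least $3$ and applies the Euler polyhedral formula, and your steps ($\sum_f \ell(f)=2|E|$, then $3|F|\le 2|E|$, then the equality analysis) spell this out in full. Your third bullet is superfluous, since the first two already rule out faces of length $1$ and $2$.
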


The \emph{orientable genus} $\gamma(G)$ of a graph $G$ is the smallest integer $k$ where $G$ embeds in $S_k$, and the \emph{nonorientable genus} $\tilde{\gamma}(G)$ is defined analogously. By rearranging Proposition \ref{prop-eulerupper}, we obtain the so-called ``Euler lower bound'' on the two genus parameters:

\begin{corollary}
Let $G$ be a simple connected graph on at least three vertices. The orientable genus of $G$ is at least
$$\gamma(G) \geq \left\lceil \frac{|E(G)|-3|V(G)|+6}{6}\right\rceil.$$
The nonorientable genus of $G$ is at least
$$\tilde{\gamma}(G) \geq \left\lceil \frac{|E(G)|-3|V(G)|+6}{3}\right\rceil.$$
\label{cor-eulerlower}
\end{corollary}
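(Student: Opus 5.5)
This follows directly from Proposition \ref{prop-eulerupper}, so the plan is to rearrange that inequality and then use integrality of the genus.

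Let $G$ be simple and connected with at least three vertices, and take a minimum-genus embedding of $G$ in the relevant surface. I will work with cellular embeddings, as in the paper's convention. This loses nothing: an embedding of a connected graph in a minimum-genus orientable surface is cellular by Youngs' theorem, and the analogous fact holds in the nonorientable case. Alternatively, a non-cellular face can be cut along a noncontractible curve and capped, which strictly lowers the Euler genus and so contradicts minimality.

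\emph{Orientable case.} Put $g = 2\gamma(G)$. Proposition \ref{prop-eulerupper} gives
$$|E(G)| \le 3|V(G)| - 6 + 3g = 3|V(G)| - 6 + 6\gamma(G).$$
Rearranging, $\gamma(G) \ge (|E(G)| - 3|V(G)| + 6)/6$. Since $\gamma(G)$ is an integer, we may take the ceiling of the right-hand side.

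\emph{Nonorientable case.} Put $g = \tilde{\gamma}(G)$. The same inequality gives $|E(G)| \le 3|V(G)| - 6 + 3\tilde{\gamma}(G)$. Hence $\tilde{\gamma}(G) \ge \lceil (|E(G)| - 3|V(G)| + 6)/3 \rceil$, again by integrality.

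There is essentially no obstacle here. The only subtle point is justifying that a minimum-genus embedding may be taken to be cellular, so that Proposition \ref{prop-eulerupper} applies; this is handled by the standard fact cited above.
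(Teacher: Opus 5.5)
Your argument is the paper's own: the corollary comes from rearranging Proposition~\ref{prop-eulerupper} with $g = 2\gamma(G)$ or $g = \tilde{\gamma}(G)$ and then rounding up because the genus is an integer. The paper sidesteps the cellularity question entirely by declaring at the outset that all embeddings are cellular.

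One correction to your aside on cellularity. The nonorientable analogue of Youngs' theorem is false as you state it. Adding a crosscap inside one face of the toroidal triangulation of $K_7$ gives a non-cellular embedding in $N_3$, and $\tilde{\gamma}(K_7)=3$. Your cut-and-cap surgery also need not contradict minimality in the nonorientable case, since the resulting surface may be orientable (here it gives back the torus).

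No contradiction is needed, however. Repeated surgery yields a cellular embedding in some surface, orientable or not, of Euler genus $g' \le g$. Proposition~\ref{prop-eulerupper} then gives $|E(G)| \le 3|V(G)|-6+3g' \le 3|V(G)|-6+3g$, which is all the argument requires.
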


We note that Theorems \ref{thm-mct-orient}, \ref{thm-mct-nonorient}, \ref{thm-main-orient}, and \ref{thm-main-nonorient} are the statements that the genera of the complete graphs (except $K_7$ in Theorem \ref{thm-mct-nonorient}) and $K_n-C_5$ match this lower bound. However, unlike the complete graphs, the graph $K_n - C_5$ never triangulates a surface, so we find genus embeddings by first constructing triangular embeddings of a supergraph, and then deleting the appropriate edges. 

\begin{proposition}
Let $G$ be a 6-edge-connected (resp. 3-edge-connected) graph with a triangular embedding in an orientable (resp. nonorientable) surface. Then, deleting up to five (resp. two) edges from this embedding yields an orientable (resp. nonorientable) genus embedding of the resulting graph.
\label{prop-del}
\end{proposition}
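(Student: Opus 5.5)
Let $G$ have a triangular embedding $\phi$ in a surface of Euler genus $g$, and let $G'$ be obtained by deleting a set $D$ of at most five edges (orientable case) or at most two edges (nonorientable case). I will compare the Euler lower bound of Corollary \ref{cor-eulerlower} for $G'$ with the Euler genus of the embedding of $G'$ inherited from $\phi$. By Proposition \ref{prop-eulerupper}, $|E(G)| = 3|V(G)|-6+3g$. Since $G$ is at least 3-edge-connected, $G'$ is connected and has the same vertex set, so the lower bound applies. In the orientable case, with $g = 2k$,
$$\gamma(G') \geq \left\lceil \frac{6k - |D|}{6}\right\rceil = k \quad\text{for } 0\le |D|\le 5.$$
In the nonorientable case,
$$\tilde\gamma(G') \geq \left\lceil \frac{g-|D|}{3}\cdot 3/3\right\rceil,$$
more precisely $\lceil (3g-|D|)/3\rceil = g$ for $|D|\le 2$. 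It therefore suffices to show that $G'$ still embeds cellularly in the same surface.

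I will delete the edges one at a time, keeping the surface fixed and the embedding cellular. Deleting an edge $e$ from a cellular embedding leaves the surface unchanged and the embedding cellular exactly when the two sides of $e$ lie on distinct faces. In that case the two faces merge into one disk and the face count drops by one. If both sides lie on the same face, deleting $e$ yields a non-cellular region, and Euler genus would drop only after re-embedding. For the upper bound I only need $G'$ to embed in the original surface, and any embedding restricts to an embedding of a subgraph, possibly non-cellular. So the cleanest route is to observe that the restriction of $\phi$ to $G'$ is an embedding in the same surface, and $\gamma(G')\le k$ (respectively $\tilde\gamma(G')\le g$) follows immediately. For the nonorientable case there is one subtlety: the minimal surface must remain nonorientable. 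But $\tilde\gamma$ is the minimum crosscap number of a nonorientable surface into which $G'$ embeds, and $N_g$ itself is nonorientable, so $\tilde\gamma(G')\le g$ holds regardless.

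Combining the two bounds gives equality, so the restricted embedding is a genus embedding. One still has to check that it is cellular, since the paper's convention is that all embeddings are cellular. A genus embedding of a connected graph is automatically cellular: if some face were not a disk, cutting along a non-contractible curve in it and capping would give an embedding in a surface of smaller Euler genus. This contradicts minimality, except in the degenerate case where only a nonorientable surface of lower genus fails to exist, and that does not arise here. This is where the edge-connectivity hypothesis enters. Connectivity of $G'$ is needed both for the lower bound and for this cellularity argument, and 6-edge-connectivity (respectively 3-edge-connectivity) guarantees that deleting five (respectively two) edges leaves $G'$ connected.

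The main obstacle I expect is the nonorientable cellularity step. Cutting and capping inside a non-disk face might produce an orientable surface instead of a smaller nonorientable one, and that does not immediately contradict the definition of $\tilde\gamma$. The fix I would try is to note that if the capped surface is orientable of Euler genus $g'<g$, adding a crosscap gives a nonorientable surface of Euler genus $g'+1\le g$. The case $g'+1=g$ needs care: there I would argue directly that the minimal-genus conclusion still forces the face structure to be cellular, since the lower bound $\tilde\gamma(G')\ge g$ already rules out any embedding in $N_{g-1}$.
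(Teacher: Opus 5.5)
Your main computation is the paper's proof. The triangular embedding gives $|E(G')|-3|V|+6=3g-|D|$, so the Euler lower bound for the connected graph $G'$ already equals the Euler genus of the surface, and the embedding inherited from $\phi$ sits in that same surface. The paper stops at this point and tacitly treats the inherited embedding as a legitimate embedding in that surface. Your first display for the nonorientable bound is garbled, but the corrected value $\lceil(3g-|D|)/3\rceil=g$ is right. The extra cellularity step you add is sound in the orientable case, since cutting and capping there stays orientable.

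The nonorientable case of that extra step has a real flaw. Suppose cutting and capping in a non-disk face yields an orientable surface of Euler genus $g'=g-1$. Adding a crosscap only brings you back to Euler genus $g$, so there is no contradiction yet. Your appeal to $\tilde\gamma(G')\ge g$ ``ruling out $N_{g-1}$'' does not help, because the surface in question is orientable. Embedding in an orientable surface of Euler genus $2\gamma$ does not imply embedding in $N_{2\gamma}$: $K_7$ embeds in $S_1$ but not in $N_2$.

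There are two easy repairs.
\begin{itemize}
\item Use Proposition~\ref{prop-eulerupper}, which is stated for Euler genus and is indifferent to orientability. Iterate the cut-and-cap until the embedding is cellular. Any cellular embedding of $G'$ in a surface of Euler genus $g''$ satisfies $3g-|D|\le 3g''$, so $g''\ge g$ whenever $|D|\le 2$. This excludes Euler genus $g-1$ for orientable and nonorientable surfaces alike.
\item Argue directly. The faces of $G'$ are unions of triangles glued along the deleted edges, and no vertex is absorbed because $G'$ is connected. Such a face fails to be a disk only if the dual edges of $D$ contain a cycle. With $|D|\le 2$, that cycle would be either an edge lying twice on one triangle, which is impossible in a simple graph, or two triangles sharing two edges. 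The latter forces the common vertex to have degree $2$, which 3-edge-connectivity excludes.
\end{itemize}
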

\begin{proof}
We prove the orientable case---the nonorientable case differs only in the denominator of the Euler lower bound. Since there exists a triangular embedding of $G = (V, E)$, then $(|E|-3|V|+6)/6$ is an integer. After deleting up to five edges, the graph is still connected, so the Euler lower bound of the resulting graph is at least
$$\left\lceil\frac{(|E|-5)-3|V|+6}{6}\right\rceil = \frac{|E|-3|V|+6}{6} + \left\lceil -\frac{5}{6} \right\rceil = \frac{|E|-3|V|+6}{6}.$$
\end{proof}

\v{S}krekovski \cite{Skrekovski} applied this result to orientable triangular embeddings of $K_n-K_2$, which exist for $n \equiv 2, 5 \pmod{12}$ (see \cite{Sun-Index2, Ringel-MapColor}), to obtain orientable genus embeddings of $K_n - C_5$. It can also be applied to orientable triangular embeddings of $K_n$, $n \equiv 0, 3, 4, 7 \pmod{12}$, as well:

\begin{proposition}[\v{S}krekovski \cite{Skrekovski}]
For $n \equiv 0, 2, 3, 4, 5, 7 \pmod{12}$, $n \geq 5$, there exists an orientable embedding of $K_n - C_5$ whose genus equals the Euler lower bound. 
\label{prop-skrek}
\end{proposition}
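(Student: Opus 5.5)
The approach is to apply Proposition \ref{prop-del} to a suitable triangular embedding and then delete edges. There are two cases.

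\textbf{Case $n \equiv 0, 3, 4, 7 \pmod{12}$.} By Theorem \ref{thm-mct-orient}, $(n-3)(n-4)/12$ is an integer here, so a genus embedding of $K_n$ has exactly $3n-6+6\gamma(K_n)$ edges. By Proposition \ref{prop-eulerupper}, such an embedding is triangular. For $n \geq 7$, $K_n$ is $(n-1)$-edge-connected and hence $6$-edge-connected. Deleting the five edges of a $5$-cycle therefore yields, by Proposition \ref{prop-del}, an orientable embedding of $K_n - C_5$ of genus equal to the Euler lower bound. The remaining small values are $n=3,4$, which are excluded by $n \geq 5$. For $n \in \{5,6\}$, only $n=5$ is not of the listed residues... in fact $n \equiv 5$ falls in the second case.

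\textbf{Case $n \equiv 2, 5 \pmod{12}$.} Here we invoke the cited orientable triangular embeddings of $K_n - K_2$ (from \cite{Sun-Index2, Ringel-MapColor}). Choose the $5$-cycle so that it contains the missing edge $uv$, say $C_5 = u v w x y$. Then $K_n - C_5$ is obtained from $K_n - K_2$ by deleting the four edges $vw, wx, xy, yu$. The graph $K_n - K_2$ has minimum degree $n-2$, and its edge-connectivity equals its minimum degree, which is at least $6$ once $n \geq 8$. Proposition \ref{prop-del} then applies with four deletions.

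\textbf{Small cases.} These are $n = 5$ and $n = 7$ (the latter is covered by the first case since $K_7$ is $6$-edge-connected). For $n=5$, $K_5 - C_5 = C_5$ is planar, and the Euler lower bound is $\lceil(5-15+6)/6\rceil = 0$, so there is nothing to prove. The value $n = 4$ is excluded by the hypothesis.

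\textbf{Main obstacle.} The main obstacle is not the deletion argument, which is routine. It is verifying the existence and connectivity hypotheses: that the triangular embeddings of $K_n - K_2$ exist for all $n \equiv 2, 5 \pmod{12}$ in the needed range, and that the edge-connectivity of $K_n$ or $K_n - K_2$ is at least $6$ in the small instances. These are handled by the standard fact that complete graphs and complete graphs minus an edge have edge-connectivity equal to their minimum degree, together with the direct check for $n=5$ above.
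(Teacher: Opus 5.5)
Your proof is correct and takes the same route as the paper, which attributes the argument to \v{S}krekovski: apply Proposition~\ref{prop-del} to the triangular embeddings of $K_n$ for $n \equiv 0,3,4,7 \pmod{12}$, and to those of $K_n - K_2$ for $n \equiv 2,5 \pmod{12}$, routing the $5$-cycle through the missing edge so that only four edges are deleted. Your separate treatment of $n=5$ supplies a detail the paper leaves implicit, since $K_5-K_2$ is not $6$-edge-connected but $K_5 - C_5 \cong C_5$ is planar; just clean up the garbled remark about $n \in \{5,6\}$.
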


For the nonorientable genus of $K_n-C_5$ and the remaining residues in the orientable case, we find triangular embeddings of complete graphs with three or four edges missing. Let $P_m$ denote the path graph on $m$ edges. For three edges, we have a choice to construct an embedding of either $K_n - (K_2 \cup P_2)$ or $K_n - P_3$, but for four edges, the only option is $K_n-P_4$. 

In the nonorientable case, triangular embeddings of $K_n-P_3$ were found by Ringel \cite{Ringel-MinNon} for most residues $n$ modulo $12$ as part of a more general construction for minimum triangulations. The full result for minimum triangulations in all closed surfaces was proven in two papers by Jungerman and Ringel:

\begin{theorem}[Jungerman and Ringel \cite{Ringel-MinNon, JungermanRingel-Minimal}]
Besides the surfaces $S_2$, $N_2$, and $N_3$, the fewest number of vertices in a simple triangulation of a surface of Euler genus $g$ is $\lceil H(g) \rceil.$ For the remaining surfaces, there is a simple triangulation on $\lceil H(g) \rceil + 1$ vertices. 
\label{thm-mintri}
\end{theorem}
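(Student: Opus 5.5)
This theorem is the Jungerman--Ringel minimum triangulation theorem, cited from \cite{Ringel-MinNon, JungermanRingel-Minimal}. The paper announces that its techniques give a simpler proof of the nonorientable half, so I would only reprove that half. The orientable half would be quoted as is.

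The lower bound is just Proposition \ref{prop-eulerupper}. A simple triangulation on $v$ vertices has $3v-6+3g \le \binom{v}{2}$ edges, so $v^2-7v+12-6g \ge 0$, which gives $v \ge H(g)$. It remains to show that $N_g$ has a simple triangulation on $n = \lceil H(g)\rceil$ vertices, except for $N_2$ and $N_3$. Here $n$ is the least integer with $(n-3)(n-4)\ge 6g$.

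Write $t = \binom{n}{2} - (3n-6+3g)$ for the number of edges missing from $K_n$ in such a triangulation. Since $(n-3)(n-4)/6 - g$ lies in $[0,\,(n-4)/3)$, as $n$ is minimal, we get $0 \le t < n-4$. It is therefore enough to build triangular embeddings of $K_n - H_t$ in nonorientable surfaces, where $H_t$ has $t$ edges and $t$ ranges over all admissible values $0 \le t \le n-5$ with $t \equiv \binom{n}{2}-3n+6 \pmod 3$. For $t = 0$ these are Ringel's nonorientable triangulations by complete graphs, from Theorem \ref{thm-mct-nonorient}; the exception $K_7$ is exactly why $N_2$ fails.

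For larger $t$, I would avoid constructing a separate family for every $t$ and instead reduce to a few base embeddings. Start from a triangular embedding of $K_n$, or of $K_n - P_3$ or $K_n - K_2$, depending on $n \bmod 6$. Then lower the genus one crosscap at a time by local surgery. For example, take two triangles $[a,b,c]$ and $[a',b,c]$ sharing edge $bc$, where $a a'$ is an edge and the embedding has a suitable twisted cycle nearby. Deleting three edges and re-triangulating cuts the Euler genus by one while keeping the graph simple. Each step drops three edges and one unit of Euler genus, which is consistent with the Euler count. Repeating this while $t < n-4$ reaches every target surface.

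To get base embeddings with the right missing edges, and room to perform the surgery, I would use current graphs with index $1$ or $2$ and cyclic current groups $\Z_{n}$ or $\Z_{n-1}$, as in the Map Color Theorem. I would then modify them with one or two vortices or a nonorientable ``twisted'' edge, so that the derived embedding is nonorientable and omits a prescribed short path. This handles each residue of $n$ modulo $6$ (or $12$). Small $n$, say $n \le 12$, would be checked by explicit embeddings. For $N_3$, $n=7$ would require $K_7$ minus one edge to triangulate $N_3$, which fails for the same reason $K_7$ does not embed in $N_2$. That leaves $\lceil H \rceil + 1$ vertices for $N_2$ and $N_3$, and for $S_2$ on the orientable side, to be exhibited directly.

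The main obstacle is the repeated surgery step. It needs local configurations, available at every stage, that delete three edges without creating multi-edges and while preserving nonorientability. I would try to guarantee these by designing the current graph so that its rotation contains many disjoint copies of a fixed ``removable'' pattern, for instance consecutive currents $k, k+1$ around a vertex. Each copy can then be used once, giving the roughly $(n-4)/3$ successive reductions needed.
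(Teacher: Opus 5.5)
Your outline has the same architecture as the paper's (and Ringel's). You use the Euler bound, reduce to nonorientable $(n,t)$-triangulations with $t\le n-6$ in the right class mod $3$ (your $t\le n-5$ is equivalent once the congruence is imposed), and then subtract crosscaps repeatedly from a current-graph embedding. But the step that carries all the content is left open, and what you sketch for it does not work as stated.

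\textbf{The surgery is misdescribed.} Two triangles on $bc$ plus an edge $aa'$ do not determine which three edges to delete or how to retriangulate. Removing one crosscap (Construction~\ref{con-subtract}) needs a specific configuration:
the four faces $[b,p,d]$, $[b,d,c]$, $[b,c,a]$, $[p,a,c]$ must form a hexagon $[b,p,d,c,p,a]$ in which $p$ appears twice. Equivalently, $p,d,c,a$ are consecutive at $b$, and the pairs $a,c$ and $b,d$ are consecutive at $p$, under a signature making these faces triangles (which forces nonorientability). You delete the path $[a,c,b,d]$ and reverse the segment $c\dots b$ at $p$, leaving the triangles $[p,b,a]$ and $[p,d,c]$.

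\textbf{The main gap is independence.} You give no way to guarantee about $(n-6)/3$ such configurations at once that are pairwise face-disjoint. ``Consecutive currents $k,k+1$'' is not the needed pattern. Translating a single configuration by the additivity rule also does not automatically give independent copies: the paper notes that for $n=11$ and $n=12$ the translates share faces, because the four faces come from only two current-graph vertices, or one lies in a vortex face.

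\textbf{The missing idea is the broken 2-ladder} (Lemma~\ref{lem-broken}). Suppose the rungs carrying $\pm\gamma$ and $\pm2\gamma$, with $k\mid\gamma$, are both traversed unidirectionally by the same circuit. Then its log contains $\alpha,\gamma,\gamma-\beta,2\gamma,\alpha+\gamma,\dots,-\gamma,-\beta$. This is exactly the configuration above with $b=0$, $p=\gamma$, $d=\gamma-\beta$, $c=2\gamma$, $a=\alpha+\gamma$. Its four faces come from four distinct trivalent vertices, so the $s=m/k$ translates are pairwise independent. In the families used there are always more of them than the at most $(n-6)/3$ subtractions needed, and a leftover subtractible crosscap certifies nonorientability. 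This answers your concern about preserving nonorientability.

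\textbf{You still need families for every residue.} Even with this lemma, one must exhibit families with broken 2-ladders for every residue of $n$ mod $12$, and ``index $1$ or $2$ with one or two vortices'' does not do that. The paper:
quotes known families for $n\equiv0,3,4,5,9,11$;
builds new index-$1$ families for $n\equiv1,7$ (the case $n\equiv1$ is where Ringel's original argument appears incomplete);
treats $n\equiv8$ by handling $t=1$ separately and combining Construction~\ref{con-con8} with subtraction for $t\ge4$;
uses the gluing induction of Lemma~\ref{lem-induction} for $n\equiv2,6,10$;
and checks $n=9,11,12,14$ by hand.

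\textbf{Your exceptional case is miscomputed.} $\lceil H(3)\rceil=8$, not $7$. What fails for $N_3$ is an $(8,1)$-triangulation, i.e.\ $K_8-K_2$, which is a separate result of Ringel and does not follow ``for the same reason'' as Franklin's theorem. Note that $K_7-e$ has $20$ edges, short of the $24$ a $7$-vertex triangulation of $N_3$ would need. You also still owe explicit $(8,4)$- and $(9,6)$-triangulations for $N_2$ and $N_3$.
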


The exceptions to the above formula were demonstrated by Huneke \cite{Huneke}, Franklin \cite{Franklin-SixColor}, and Ringel \cite{Ringel-MinNon}, respectively. For all other cases, Ringel \cite{Ringel-MinNon} observed that the problem can be phrased in terms of finding triangular embeddings of graphs on specific numbers of vertices and edges. An \emph{$(n,t)$-triangulation} is a triangular embedding of a simple graph on $n$ vertices and $\binom{n}{2}-t$ edges. 

\begin{lemma}[Ringel \cite{Ringel-MinNon}]
To construct a minimum triangulation for all nonorientable surfaces of Euler genus at least 4, it suffices to find a nonorientable $(n,t)$-triangulation for all $n \geq 9$ and nonnegative $t \leq n-6$, where $t \equiv 1 \pmod{3}$ if $n \equiv 2 \pmod{3}$ and $t \equiv 0 \pmod{3}$, otherwise. 
\label{lem-nt}
\end{lemma}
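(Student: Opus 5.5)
Fix a nonorientable surface $N_g$ with $g \geq 4$ and let $n = \lceil H(g) \rceil$. The plan is to show that a suitable nonorientable $(n,t)$-triangulation lives on exactly $N_g$, so the needed triangulations are precisely those listed in the lemma. A triangulation on $n$ vertices with $\binom{n}{2}-t$ edges has Euler genus, by Proposition~\ref{prop-eulerupper}, equal to
$$g = \frac{\binom{n}{2} - t - 3n + 6}{3} = \frac{(n-3)(n-4) - 2t}{6}.$$
So the task is: given $g \geq 4$, exhibit $n$ and an admissible $t$ with $(n-3)(n-4) - 2t = 6g$ and $n = \lceil H(g) \rceil$. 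Any such triangulation is minimum, since the Heawood bound forbids fewer vertices. Note that $H(g) \le n$ is equivalent to $(n-3)(n-4) \geq 6g$, i.e.\ $t \geq 0$.

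First I would pin down the range of $t$. Since $n$ is the least integer with $(n-3)(n-4) \geq 6g$, we have $(n-4)(n-5) < 6g$, so
$$2t = (n-3)(n-4) - 6g < (n-3)(n-4) - (n-4)(n-5) = 2(n-4).$$
Hence $0 \leq t \leq n-5$. Integrality also forces $(n-3)(n-4) \equiv 2t \pmod 6$. Now $(n-3)(n-4) \bmod 3$ is $0$ when $n \equiv 0$ or $1$, and $2$ when $n \equiv 2 \pmod 3$; it is always even. So $t \equiv 0 \pmod 3$ when $n \not\equiv 2 \pmod 3$, and $t \equiv 1 \pmod 3$ when $n \equiv 2 \pmod 3$, which are exactly the congruences in the lemma.

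The bound $t \leq n-5$ must be sharpened to $t \leq n-6$, and this is the delicate step. Since $t \equiv n-5 \pmod 3$ fails in every residue class, $t = n-5$ is impossible. If $n \equiv 2 \pmod 3$, then $n-5 \equiv 0 \not\equiv 1$. If $n \equiv 0$ or $1 \pmod 3$, then $n-5 \equiv 1$ or $2$, neither of which is $0$. So $t \leq n-6$.

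For $g \geq 4$ we have $H(4) = (7+\sqrt{97})/2 \approx 8.4$, so $n \geq 9$ (and $n$ increases with $g$). The pair $(n,t)$ therefore lies in the range of the lemma. Conversely, each admissible pair gives a value of $g$ via the formula above, but we only need the forward direction. I expect no real obstacle: the main care point is the residue check excluding $t = n-5$, together with confirming that $\lceil H(g) \rceil$ is characterized by $(n-3)(n-4) \ge 6g > (n-4)(n-5)$.
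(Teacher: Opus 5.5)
Your proof is correct, and it is exactly the Euler-formula bookkeeping behind Ringel's observation, which the paper cites without proof: $n=\lceil H(g)\rceil$ gives $(n-4)(n-5)<6g\le (n-3)(n-4)$, hence $0\le t\le n-5$; integrality of $g=\bigl((n-3)(n-4)-2t\bigr)/6$ forces the stated residue of $t$ modulo $3$; and that residue excludes $t=n-5$. One phrase needs tightening: ``least integer with $(n-3)(n-4)\ge 6g$'' should be restricted to $n\ge 4$, where the quadratic is increasing, or replaced by $n-1<H(g)$ with $n-1\ge 8$ lying above the smaller root.
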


Then, each residue $n$ modulo $12$ is handled separately, similar to the proofs of Theorems \ref{thm-mct-orient} and \ref{thm-mct-nonorient}. However, Ringel's original argument \cite{Ringel-MinNon} appears to be incomplete for $n \equiv 1 \pmod{12}$. For this residue, Ringel used an inductive construction (see Section 10.1 of Ringel \cite{Ringel-MapColor} for a current graph interpretation):

\begin{lemma}[Ringel \cite{Ringel-MinNon}]
For each $t \geq 2$, if there exist $(2t+1, r_1)$-, $(2t+1, r_2)$-, and $(2t+1, r_3)$-triangulations, where each graph has a vertex adjacent to all other vertices, then there exists a $(6t+1, r_1+r_2+r_3)$-triangulation. If any of the former three embeddings are nonorientable, then the latter embedding is nonorientable as well.
\end{lemma}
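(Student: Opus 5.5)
This is Ringel's lemma, and I would prove it with the standard "index 3" construction: amalgamate three triangulations on $2t+1$ vertices into one triangulation on $6t+1$ vertices. Let $G_1, G_2, G_3$ be the $(2t+1,r_i)$-triangulations. In each $G_i$, let $u_i$ be a vertex adjacent to all others. The remaining $2t$ vertices are then to be identified with the nonzero elements of a structure of order $6t+1$. More precisely, I would follow Section 10.1 of Ringel. Take a current graph with currents in $\mathbb{Z}_{6t+1}$ (or the vertex set $\mathbb{Z}_{6t+1}$) whose derived embedding is a triangular embedding of $K_{6t+1}$ in which the vertices fall into a central vertex and three ``blocks.'' Equivalently, I would realise $K_{6t+1}$ as $K_1 + (K_{2t}+K_{2t}+K_{2t})$ restricted appropriately.

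The key steps are as follows.

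\emph{Step 1.} Delete $u_i$ from $G_i$. Since $u_i$ is adjacent to everything, its rotation is a Hamiltonian cycle of $G_i - u_i$ bounding a single disk face. This gives a triangulated surface with one boundary $2t$-gon $C_i$, and it misses exactly $r_i$ edges among its $2t$ vertices.

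\emph{Step 2.} Build a triangular embedding of the complete tripartite-plus-apex graph $K_1 + K_{2t,2t,2t}$. Here each of the three partite sets has its vertices ordered cyclically, and the embedding has three distinguished $2t$-gonal holes, one per part, whose boundaries are exactly these cycles. Everything else is triangular and every vertex is a disk point. I would obtain this from an index-3 current graph over $\mathbb{Z}_{6t+1}$, or from Ringel's explicit orientable construction, choosing it so that the hole boundaries match the cyclic orders of the $C_i$.

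\emph{Step 3.} Glue each bordered surface from Step 1 into the corresponding hole of Step 2, identifying $C_i$ with the hole boundary. The result is a triangular embedding of a graph on $1+6t$ vertices. Its non-edges are exactly the non-edges inside the parts, so it misses $r_1+r_2+r_3$ edges. The apex can play the role of a universal vertex if needed.

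\emph{Step 4.} For nonorientability, if some $G_i$ is nonorientable, then $G_i-u_i$ contains an orientation-reversing cycle. That cycle persists in the glued surface, so the glued surface is nonorientable. I would also check that the graph is simple, which holds because the parts are disjoint, and that vertex links are single cycles. Links are fine because each boundary vertex's link is an arc in the piece plus an arc in the frame.

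The main obstacle is Step 2. Producing the frame with exactly three holes whose boundary cycles are $2t$-gons, with every other face a triangle and all links cycles, requires an explicit current graph and a verification of its face boundaries and the Kirchhoff/excess conditions for each $t \ge 2$. I would try to adapt Ringel's index-3 current graph family from Section 10.1. I would also need to check that the cyclic order induced on each hole can be matched to an arbitrary Hamiltonian cycle $C_i$; this is fine by relabeling, since any cyclic ordering of the part works.
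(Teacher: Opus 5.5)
Your architecture is the right one. It is the template of the Ringel construction the paper cites, and the same one the paper carries out in its proof of Lemma~\ref{lem-induction}: delete the universal vertex so its link becomes a Hamiltonian boundary, then glue the bordered surface into a non-triangular face of a current-graph ``frame.'' Steps 1, 3 and 4 are fine. Suppose the frame is a triangular embedding of $K_1+K_{2t,2t,2t}$ plus a $2t$-cycle in each part, with these three cycles bounding its only non-triangular faces. Then the glued complex is a simple triangulation whose vertex links are cycles, and it misses exactly $r_1+r_2+r_3$ edges. Nonorientability is inherited, since removing the open star of $u_i$ does not change orientability. As a consistency check, such a frame has Euler genus $4t^2-2$, and Euler genus adds under this gluing, which matches the target.

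The gap is that the entire content of the lemma sits in Step 2, which you only assert, and the tool you name cannot exist as described. The index of a current graph must divide the order of its current group, so there is no index~3 current graph over $\mathbb{Z}_{6t+1}$. Nor is the derived embedding you want one of $K_{6t+1}$; that would leave no holes to glue into.

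One setup of the right shape is an index~3 current graph over $\mathbb{Z}_{6t}$ with the following features:
\begin{itemize}
\item The three circuits generate the rotations of the three residue classes mod~$3$, and these classes are the parts.
\item One vortex of type (V1) has its subdivision vertex serve as the apex.
\item On each circuit $[j]$ there is one vertex of degree~1 carrying a current $\delta_j\in 3\mathbb{Z}_{6t}$ of order $2t$. It generates the hole $[v, v+\delta_j, v+2\delta_j, \dotsc]$ on class $j$.
\item All other vertices have degree~3 and satisfy Kirchhoff's current law.
\item The log of each circuit contains every element not divisible by $3$ exactly once, and no multiple of $3$ other than $\pm\delta_j$.
\end{itemize}
Producing an explicit family of such frames for every $t\geq 2$ is what the cited Ringel construction supplies, and your proposal leaves it undone. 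As written, you prove only that the existence of the frame implies the lemma.
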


For the smallest case $t = 2$, there is a $(5,1)$-triangulation in the plane, and there does not exist a $(5,4)$-triangulation. Thus, this method can only construct an orientable $(13,3)$-triangulation and cannot construct a $(13,6)$-triangulation. The gap in this construction affects every order $n = 12s+1$, where $s$ is a power of $3$. Given the work on computationally enumerating small triangulations of surfaces (e.g., Ellingham and Stephens \cite{Ellingham-Neighborly} and Sulanke and Lutz \cite{SulankeLutz}), almost certainly the missing nonorientable triangulations have been discovered at some point, but the author is not aware of any explicit examples in the literature.

While we can cite Ringel's other results in our proofs, the author felt obligated to find a new method for the nonorientable part of Theorem \ref{thm-mintri}. Roughly speaking, for the other residues $n \not\equiv 1 \pmod{12}$, Ringel's proof proceeds in two steps:
\begin{itemize}
\item find an appropriate triangular embedding of a complete or near-complete graph on $n$ vertices, then
\item repeatedly modify the rotation system to obtain $(n,t)$-triangulations for larger values of $t$.
\end{itemize}
For the first step, Ringel reused embeddings from his earlier work, mostly from his proof of Theorem \ref{thm-mct-nonorient} \cite{Ringel-Non}. These were found using his ``leading permutation'' method, which apparently Ringel himself (see \cite[pg.9]{Ringel-MapColor}) considered to be very difficult. Youngs \cite{Youngs-Nonorientable} (with further improvements by Jungerman \cite{Jungerman-Case1} and Korzhik \cite{Korzhik-AnotherProof,Korzhik-SimpleProof}) simplified the proof of Theorem \ref{thm-mct-nonorient} using current graphs, but it is not clear whether the second step in Ringel's plan, which we call \emph{crosscap subtraction}, can be performed on all of these embeddings. In Section \ref{sec-min}, we provide a sufficient condition for repeated crosscap subtraction in embeddings derived from current graphs. Then, we combine old and new families of current graphs (including one for $n \equiv 1 \pmod{12}$) satisfying this condition to reprove the nonorientable part of Theorem \ref{thm-mintri}.

\section{Combinatorial embeddings and current graphs}

For more background on topological graph theory and the theory of current graphs, see Gross and Tucker \cite{GrossTucker} and Ringel \cite{Ringel-MapColor}. 

To specify cellular embeddings combinatorially, we start by distinguishing the ends of each edge in $E$. Each edge $e$ induces two arcs $e^+$ and $e^-$ pointing in opposite directions, and we denote the set of such arcs by $E^+$. Cellular embeddings in surfaces are specified by a (general) \emph{rotation system}, where each vertex is assigned a \emph{rotation}, a cyclic permutation of the arcs leaving the vertex, and the edges are assigned a \emph{signature} $\lambda\colon E \to \{-1, +1\}$. An edge with signature $+1$ is said to be \emph{normal} and an edge with signature $-1$ is said to be \emph{twisted}. A face-boundary walk has two \emph{behaviors}, a \emph{normal} behavior where the next arc after $u \to v$ is chosen to be the arc in the rotation of $v$ after $v \to u$, and an \emph{alternate} behavior where the next arc chosen is the one before $v \to u$. The face-boundary walk switches to the other behavior when it traverses a twisted edge. We say that an edge is \emph{bidirectional} if the two times a face-boundary walk traverses the edge is in opposite directions, and \emph{unidirectional}, otherwise. The embedding is nonorientable if and only if there is a cycle in the graph with an odd number of twisted edges. In particular, when all of the edges in a rotation system are normal, the embedding is orientable. 

Given a triangular embedding of a simple graph, if there are two triangular faces $[a, b, c]$ and $[b, a, d]$ sharing the edge $(a, b)$, then the \emph{edge flip} $(a, b) \to (c, d)$ deletes $(a, b)$ and adds $(c, d)$ in the resulting quadrangular face. If $c$ and $d$ are already adjacent, then we delete the original edge connecting those two vertices, which triggers another edge flip. A \emph{sequence of edge flips}, denoted by 
$$(u_1, v_1) \to (u_2, v_2) \to \dotsc \to (u_{i-1}, v_{i-1}) \to (u_i, v_i),$$
deletes the edges $(u_1, v_1), \dotsc, (u_{i-1}, v_{i-1})$ and adds $(u_2, v_2), \dotsc, (u_i, v_i)$ in the resulting quadrangular faces, respectively.

A \emph{current graph} is an embedded graph $\phi\colon G \to \Sigma$, given as a rotation system, with an arc-labeling $\alpha\colon E^+ \to \Gamma$ with elements from a finite abelian group $\Gamma$. In this work, $\Gamma$, the \emph{current group}, is always a cyclic group $\Z_m$. For each edge $e$, the arc-labeling satisfies $\alpha(e^+) = -\lambda(e)\alpha(e^-)$, where $\lambda$ is the edge signature. The \emph{index} of the current graph is the number of faces in the embedding, and it is required that the index divides the order of the current group $m$. Face boundaries are called \emph{circuits}, and they are labeled $[0], [1], \dotsc, [k-1]$, where $k$ is the index. Given a circuit that traverses the cyclic sequence of arcs $(e^\pm_1, e^\pm_2, \dotsc)$, the \emph{log} of the circuit replaces $e^\pm_i$ with $\alpha(e^\pm_i)$ if the circuit is in normal behavior, and $-\alpha(e^\pm_i)$, otherwise. 

The excess of a vertex is the sum of the currents entering that vertex, and if the excess is 0, we say that vertex satisfies \emph{Kirchhoff's current law}. Most vertices are of degree 3 and satisfy Kirchhoff's current law, which cause them to generate triangular faces, but we also make use of special vertices, called \emph{vortices}, that do not satisfy Kirchhoff's current law, and hence generate longer faces. Given a current graph of index $k$, each vortex has degree $k$, is incident with each of the $k$ circuits, and has excess that either generates (1) the subgroup $k \Z_m$, or (2) the subgroup $(2k) \Z_m$. We call such a vertex a \emph{vortex of type (V1) or (V2)}, respectively. We take the convention that vortex labels are incorporated into logs as a reminder of where the nontriangular faces will be. 

Except for the current graphs used in Lemma \ref{lem-induction}, the current graphs in this work satisfy the following standard properties:

\begin{itemize}
\item[(C1)] Each vertex has degree 1 or 3.
\item[(C2)] The log of each circuit contains each nonzero element of the current group exactly once. 
\item[(C3)] Unlabeled vertices of degree 3 satisfy Kirchhoff's current law.
\item[(C4)] The excess of each unlabeled vertex of degree 1 has either order 2 or 3 in the current group.
\item[(C5)] Labeled vertices are vortices of type (V1) or (V2). 
\item[(C6)] For each edge $e$, if circuit $[i]$ passes through $e^+$ in, say, normal behavior, and $[j]$ passes through $e^-$, then $j - i \equiv \alpha(e^+) \pmod{k}$. 
\end{itemize}

By property (C2), the derived graph will be the complete graph with vertex set $\mathbb{Z}_m$. To obtain the rotation system of the derived embedding of the current graph, we first temporarily ignore vortex labels in each of the logs. The rotation at vertex $v \in \mathbb{Z}_m$ is generated by the ``additivity rule'': take the log of circuit $[v \bmod{k}]$ and add $v$ to every element. An edge in the derived embedding is twisted if and only if the corresponding edge in the current graph is unidirectional. Vortices of type (V1) generate Hamiltonian $m$-sided faces, which we subdivide with a vertex of the same name. Vortices of type (V2) generate two $(m/2)$-sided faces. If the vortex label, is, say, $y$, we subdivide the face incident with vertex $0$ with a vertex labeled $y_0$, and the other face with a vertex labeled $y_1$. However, we note that $y_1$ is not necessarily adjacent to vertex $1$ when the index of the current graph is greater than 1. We call vertices that come from the current group \emph{numbered}, and the subdivision vertices from vortices \emph{lettered}.

We describe infinite families of current graphs using standard building blocks. For current graphs of index 1 and 2, this is a simple ladder, as seen in Figure \ref{fig-sampleladder}(a), where the ``rungs,'' i.e., the vertical arcs, alternate in direction and have currents that form an arithmetic sequence. To determine the rotations for the missing vertices, we refer to the two rungs separated by the ellipses. If the ``upper'' vertices have different colors, then they form the checkerboard pattern shown in Figure \ref{fig-sampleladder}(a). This pattern will only be used for the index 2 current graphs in Figure \ref{fig-orient11}. If the upper vertices have the same color, as will be the case in all other infinite families of index 1 and 2 in this work, then every rung has the same colors for the upper and lower vertices. 

Infinite families of index 3 current graphs have the more complicated ladder structure in Figure \ref{fig-sampleladder}(b), where every other rung is what Ringel and Youngs called \emph{globular}: there are two additional vertices connected by two parallel edges. The currents on those arcs match the horizontal arcs above and below each such arc. 

\begin{figure}[htbp]
\centering
    \begin{subfigure}[b]{0.99\textwidth}
        \centering
        \includegraphics[scale=0.95]{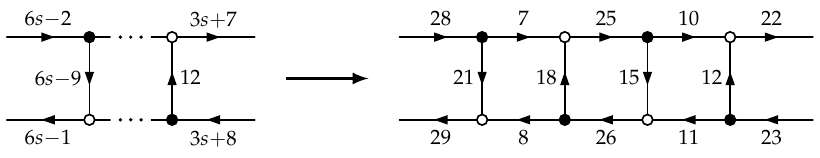}
        \caption{}
    \end{subfigure}
    \begin{subfigure}[b]{0.99\textwidth}
        \centering
        \includegraphics[scale=0.89]{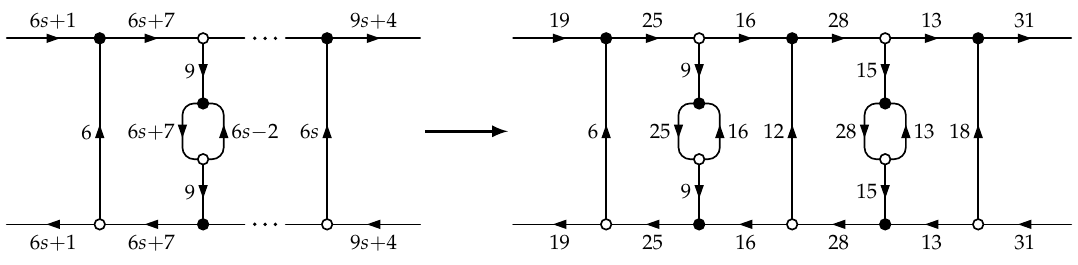}
        \caption{}
    \end{subfigure}
\caption{A ladder for index 2 (a) and index 3 (b) and their specifications for $s = 5$ and $s = 3$, respectively.}
\label{fig-sampleladder}
\end{figure}

\section{Orientable constructions} \label{sec-orient}

For brevity, all embeddings in this section are orientable. Gross \cite{Gross-Case6} observed that one can flip edges in a triangular embedding of $K_n-K_3$ to produce $(n,3)$-triangulations with different underlying graphs. We only need one such edge flip:

\begin{construction}
Given a triangular embedding of $K_n-K_3$, if $x$ and $y$ are two nonadjacent vertices and there is a vertex $u$ whose rotation is of the form
$$\begin{array}{rccccccccccccccccccccccccccccccc}
u. & \dotsc & x & v & y & \dotsc, \\
\end{array}$$
for some other vertex $v$, then flip the edge $(u, v) \to (x,y)$ to obtain a triangular embedding of $K_n-(K_2 \cup P_2)$. 
\label{con-k2p2}
\end{construction}

\begin{lemma} 
For $n \equiv 1, 6, 9, 10 \pmod{12}$, $n \geq 6$, $n \neq 9$, there exists a triangular embedding of $K_n - (K_2 \cup P_2)$ or $K_n - P_3$.
\label{lem-16910}
\end{lemma}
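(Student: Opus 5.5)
The plan is to start from known orientable triangular embeddings of $K_n-K_3$ and apply Construction \ref{con-k2p2}. A single edge flip then converts the graph into $K_n-(K_2\cup P_2)$.

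\textbf{Check the residues.} For $n\equiv 1,6,9,10 \pmod{12}$ we need $\binom{n}{2}-3\equiv 3n-6 \pmod 6$, so that the Euler characteristic makes a triangulation possible. Equivalently, $(n-3)(n-4)/2-3 \equiv 0 \pmod 6$, or $(n-3)(n-4)\equiv 6 \pmod{12}$.
\begin{itemize}
\item $n=1$: $(-2)(-3)=6$.
\item $n=6$: $3\cdot 2=6$.
\item $n=9$: $6\cdot 5=30\equiv 6$.
\item $n=10$: $7\cdot 6=42\equiv 6$.
\end{itemize}
So these are exactly the residues where $K_n-K_3$ can triangulate an orientable surface.

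\textbf{Existence of the starting embeddings.} Triangular embeddings of $K_n-K_3$ in these residues are supplied by the Map Color Theorem literature (Ringel \cite{Ringel-MapColor}, and Gross \cite{Gross-Case6} for the $K_n-K_3$ variants). They are typically produced by current graphs with a vortex whose three lettered vertices $x,y,z$ are mutually nonadjacent. I would cite these case by case.
\begin{itemize}
\item $n\equiv 1 \pmod{12}$: index-$3$ current graphs over $\Z_{12s-2}$ with three lettered vertices, or an equivalent construction.
\item $n\equiv 10 \pmod{12}$: Case 10, handled similarly.
\item $n\equiv 6, 9 \pmod{12}$: Cases 6 and 9, treated analogously.
\end{itemize}
Where a residue's known construction instead directly yields $K_n-P_3$ (for instance by adding a vertex, or by taking $K_{n}-K_2$ and adjusting), I would use that in place of the flip.

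\textbf{Applying the flip.} Let $x,y$ be two of the three missing-edge vertices. I need a vertex $u$ whose rotation contains $x,v,y$ consecutively. In a current graph derivation this is easy to arrange. The lettered vertices appear in the logs of the circuits at known positions, so in the rotation of a numbered vertex $u$ the entries around $x$ and $y$ are determined by the arcs adjacent to the vortices. It therefore suffices to find one circuit log where the vortex labels $x$ and $y$ are separated by exactly one current. Alternatively, argue from triangularity: the faces around $x$ form a wheel, and the neighbors of $x$ are all vertices except $y,z$. Take a neighbor $v$ of both $x$ and $y$ whose rotation has $x$ and $y$ two apart. If that fails generically, I would pick explicit small cases and verify.

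Once such a $u$ is found, the flip $(u,v)\to(x,y)$ keeps the embedding triangular and simple, since $x,y$ were nonadjacent. The missing edges become $(x,z),(y,z),(u,v)$, and $\{u,v\}$ is disjoint from $\{x,y,z\}$, so the missing set is $K_2\cup P_2$.

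\textbf{Main obstacle.} The main obstacle is the small values. The hypothesis excludes $n=9$, presumably because no suitable embedding of $K_9-K_3$ or $K_9-P_3$ exists or the configuration fails. Likewise $n=6$ and $n=10$ may need explicit rotation systems: for $n=6$, $K_6-K_3=K_{3}+\overline{K_3}$ triangulates the torus, and I would check the flip by hand. I would also verify the rotation-adjacency condition uniformly in each infinite family. I expect that to require inspecting each family's current graph near the vortices.
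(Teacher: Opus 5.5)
Your reduction for residues $1,6,9$ is the paper's: take a triangular $K_n-K_3$ and apply Construction~\ref{con-k2p2}. Your criterion, that two vortex labels are separated by a single current in some log, is exactly the paper's condition that two of the three (V1) vortices are adjacent in the current graph. But the whole content of the lemma is exhibiting embeddings that have this configuration, and you leave that step open. Your fallback ``argue from triangularity'' cannot work. Having $x,v,y$ consecutive at $u$ means the edge $uv$ lies in the link cycles of both $x$ and $y$. These are two Hamiltonian cycles on $V\setminus\{x,y,z\}$, and triangularity gives no reason for them to share an edge: there are only $3(n-3)$ link edges among $\binom{n-3}{2}$ possible ones. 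So specific families are needed, and the paper supplies them:
\begin{itemize}
\item current graphs with three (V1) vortices, two of them adjacent, for $n=12s+1$, $s\ge 2$, and $n=12s+9$, $s\ge 1$, from \cite{Sun-Kainen}, and for $n=12s+6$, $s\ge 2$, from \cite{Sun-Minimum};
\item for the two orders these families miss, Jungerman's explicit embeddings of $K_{13}-K_3$ and $K_{18}-K_3$ \cite{Jungerman-K18}, with the flips $(2,6)\to(11,12)$ and $(8,10)\to(16,17)$.
\end{itemize}
Your suggested ``index-$3$ current graphs over $\Z_{12s-2}$'' cannot exist, since the index must divide $12s-2\equiv 1 \pmod 3$.

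Two further cases are wrong or missing.
\begin{itemize}
\item For $n=6$, $K_6-K_3$ has $12=3\cdot 6-6$ edges, so a triangular embedding would have to be planar. But $K_6-K_3$ contains $K_{3,3}$, so it triangulates no surface; a torus triangulation on $6$ vertices would need $18$ edges. The paper instead uses the fact that $K_6-P_3$ is planar.
\item For $n\equiv 10 \pmod{12}$ you offer only ``handled similarly'', and the paper does not go through $K_n-K_3$ here at all. It takes a triangular embedding of $K_{10}-P_3$ from \cite{Sun-FaceDist}. For $n\ge 22$ it uses drawings of $K_n$ in the surface of genus $\gamma(K_n)-1$ with exactly three crossings from \cite{Sun-Kainen}, deleting one edge from each crossing. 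This yields $K_{22}-(K_2\cup P_2)$, and $K_n-P_3$ for $s\ge 2$.
\end{itemize}
Without an identified source of embeddings with the required configuration for this residue (and for the infinite families above), your argument does not prove the lemma.
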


\begin{proof}
For embeddings derived from current graphs, Construction \ref{con-k2p2} is possible when the current graph has three vortices of type (V1), and two of them are adjacent, such as in the families for $n = 12s+1$, $s \geq 2$ \cite{Sun-Kainen}, $n = 12s+6$, $s \geq 2$ \cite{Sun-Minimum}, and $n = 12s+9$, $s \geq 1$ \cite{Sun-Kainen}. In the triangular embeddings of $K_{13}-K_3$ and $K_{18}-K_3$ of Jungerman \cite{Jungerman-K18}, we may flip the edges $(2, 6) \to (11, 12)$ and $(8, 10) \to (16, 17)$, respectively. Finally, $K_6-P_3$ is planar.

We are left with the last residue, $n = 12s+10$. A triangular embedding of $K_{10} - P_3$ is presented in Sun \cite{Sun-FaceDist}. The author \cite{Sun-Kainen} (in particular, refer to Figures 18(b) and 22(c)) has constructed a drawing of $K_n$ in the surface of genus $\gamma(K_n)-1$ with three crossings. One can choose an edge to delete from each crossing to obtain the desired triangular embedding. For $s = 1$, we get a triangular embedding of $K_n - (K_2 \cup P_2)$ that is missing the edges $(d, b)$, $(d, 15)$, and $(2, 12)$.\footnote{The current arXiv version of \cite{Sun-Kainen} contains an error in Figure 18(b): the edge $(1, 8)$ crosses $(d, 15)$, not $(z, 15)$.} For $s \geq 2$, we get a triangular embedding of $K_n-P_3$ that is missing the edges $(a, f)$, $(f, e)$, and $(e, c)$. 
\end{proof}

For the exceptional case $n = 9$, Huneke \cite{Huneke} proved that there is no $(9,3)$-triangulation. However, it is possible to embed some of the graphs with four edges missing:

\begin{proposition}
The genus of $K_9-C_5$ is $2$. 
\label{prop-k9}
\end{proposition}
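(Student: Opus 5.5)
The statement has two halves: a lower bound $\gamma(K_9-C_5)\ge 2$ and an explicit embedding of $K_9-C_5$ in $S_2$.

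\emph{Lower bound.} The graph $K_9-C_5$ has $9$ vertices and $36-5=31$ edges. Corollary \ref{cor-eulerlower} gives
$$\gamma(K_9-C_5)\ge \left\lceil \frac{31-27+6}{6}\right\rceil=\lceil 10/6\rceil = 2,$$
which matches the formula of Theorem \ref{thm-main-orient}, since $\lceil (81-63+2)/12\rceil=2$.

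\emph{Why the usual route fails.} Proposition \ref{prop-del} would need a triangular embedding in $S_2$ of a $9$-vertex supergraph. Such a graph has $3\cdot 9-6+6=27$ edges, i.e.\ $t=9$ edges missing from $K_9$. But $K_9-C_5$ has only $5$ edges missing, so any supergraph has at most $5$ missing edges. So we cannot triangulate a supergraph and then delete edges. Nor can we start from a $(9,3)$-triangulation with only three edges missing, since Huneke showed none exists. We must therefore build an embedding of $K_9-C_5$ itself in $S_2$ that is not triangular.

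\emph{Face budget.} Euler's formula gives $9-31+F=-2$, so $F=20$. The face lengths sum to $62=20\cdot 3+2$. Hence the embedding has either $18$ triangles and two quadrilaterals, or $19$ triangles and one pentagon. The two-quadrilateral option is natural: it is a $(9,9)$-triangulation of $S_2$ with two extra edges added as diagonals. So I would look for a triangulation of $S_2$ on $9$ vertices, missing $9$ edges, together with two quadrangulated faces whose diagonals are among the missing edges, such that the $7$ edges still missing form a $C_5$ plus two further edges. Equivalently, take a triangular embedding of $K_9$ minus a $9$-edge graph $M$ with $C_5\subset M$, and choose two edges $e_1,e_2\in M\setminus C_5$ that can be inserted across two disjoint pairs of adjacent triangles. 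Concretely, I would split $V=\{c_0,\dots,c_4\}\cup\{a,b,d,e\}$. The set $\{a,b,d,e\}$ is joined to everything, and the $c_i$ form the cycle $c_0c_1c_2c_3c_4$ of missing edges, so each $c_i$ has degree $6$ and each of $a,b,d,e$ has degree $8$.

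\emph{Construction and main obstacle.} The main obstacle is producing the explicit rotation system; this is a finite search. First I would try a $\Z_5$-symmetric rotation system: rotate the $c_i$ cyclically, and let $a$ be fixed with the other three apex vertices handled by hand. If that fails, I would write down the rotation of $K_9-C_5$ directly, placing the $c_i$'s rotations so that the faces around each $c_i$ are triangles. I would then verify by tracing faces that there are exactly $20$ faces, $18$ triangles and $2$ quadrangles (or $19$ triangles and a pentagon), and that the orientation is consistent. That check confirms genus $2$ by Euler's formula. A computer search over rotation systems of this small graph certainly settles it, and the proof in the paper would just present the resulting rotation system.
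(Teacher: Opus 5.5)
Your lower bound is correct, but it is the trivial half. The content of the proposition is that $K_9-C_5$ actually embeds in $S_2$, and your proposal never establishes this. Saying that a computer search ``certainly settles it'' assumes the search succeeds, and that is not automatic here. Huneke showed there is no $(9,3)$-triangulation of $S_2$. Duke and Haggard showed that $K_8-P_4$ misses its Euler bound. By Noguchi's classification quoted in the Remark after this proposition, several $9$-vertex graphs with Euler lower bound $2$ (e.g.\ $K_9-4K_2$) do not embed in $S_2$. Without an explicit, verified embedding, the upper bound $\gamma(K_9-C_5)\le 2$ is unproved. Your first search branch also cannot work. An order-$5$ automorphism cycling the $c_i$ must fix each of $a,b,d,e$, since these are the degree-$8$ vertices. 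It would then have to preserve, up to reversal, the cyclic rotation of the eight neighbours of $a$ while acting on them with order $5$, and the dihedral group of order $16$ has no such element.

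The guidance you give for the search also rests on a miscount. $S_2$ has Euler genus $4$, so a $9$-vertex triangulation of $S_2$ has $3\cdot 9-6+12=33$ edges, not $27$ (that is the torus count). It would be a $(9,3)$-triangulation, which is exactly what Huneke rules out. Consequently your ``$(9,9)$-triangulation of $S_2$ plus two diagonals'' reformulation is incoherent: $27+2\neq 31$, and a triangulation has no face in which a missing edge could be drawn. Your favoured two-quadrilateral option is also obstructed whenever the two quadrilaterals admit distinct missing-edge diagonals, since adding both would produce a $(9,3)$-triangulation of $S_2$.

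The idea you are missing is to go up to ten vertices, where a triangulation of $S_2$ has $3\cdot 10+6=36$ edges. The paper exhibits such a triangulation on $\{0,\dots,8,x\}$ in which $x$ has rotation $(0,4,5,2)$ and the non-edges among $0,\dots,8$ form the path $[0,1,2,3,4]$. Deleting $x$ leaves $K_9-P_4$ in $S_2$ with a single quadrilateral face $[0,4,5,2]$. Deleting $(0,4)$ then closes the missing path into a $5$-cycle, giving $K_9-C_5$ in $S_2$ with $19$ triangles and one pentagon. That is the branch of your face budget you set aside.
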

\begin{proof}
The Euler lower bound for $K_9-C_5$ is 2. Now consider the following triangular rotation system:
$$\begin{array}{rccccccccccccccccccccccccccccccccc}
0. & x & 2 & 7 & 5 & 3 & 8 & 6 & 4 \\
1. & & 3 & 5 & 8 & 7 & 4 & 6 \\
2. & x & 5 & 6 & 8 & 4 & 7 & 0 \\
3. & & 0 & 5 & 1 & 6 & 7 & 8 \\
4. & x & 0 & 6 & 1 & 7 & 2 & 8 & 5 \\
5. & x & 4 & 8 & 1 & 3 & 0 & 7 & 6 & 2 \\
6. & & 0 & 8 & 2 & 5 & 7 & 3 & 1 & 4 \\
7. & & 0 & 2 & 4 & 1 & 8 & 3 & 6 & 5 \\
8. & & 0 & 3 & 7 & 1 & 5 & 4 & 2 & 6 \\
x. & & 0 & 4 & 5 & 2 \\
\end{array}$$
One can check that the genus of this embedding is 2. After deleting vertex $x$, the missing edges form the path $[0, 1, 2, 3, 4]$. Deleting the edge $(0, 4)$ yields the desired embedding. 
\end{proof}

\begin{remark}
Kenta Noguchi (personal communication) has classified all 9-vertex graphs embeddable in $S_2$ without computer search. In particular, the only graphs with $\binom{9}{2}-4$ edges that do not embed are $K_9 -4K_2$, $K_9 - (K_3 \cup K_2)$ and $K_9-(K_1+(K_1 \cup K_2))$. A proof of the nonembeddability of the first graph appears in Su, Noguchi, and Zhou \cite{Su-Rings}. By Proposition \ref{prop-del}, all graphs formed by deleting five edges from $K_9$ embed in $S_2$, since each graph has a spanning supergraph on $\binom{9}{2}-4$ edges that is not one of the aforementioned graphs.
\end{remark}

For the remaining residues, we apply techniques familiar to Jungerman and Ringel \cite{JungermanRingel-Minimal} for constructing $(n,4)$-triangulations. Then, we apply additional edge flips to ensure that the missing edges form a path. 

\begin{construction}
Suppose we have a current graph of index $1$ or $3$ and current group $\mathbb{Z}_{12s+6}$ with one vortex of type (V1) labeled $x$, one vortex of type (V2) labeled $y$, and no other vortices. Suppose further that the log of circuit $[0]$ is of the form
$$\begin{array}{rccccccccccccccccc}
\dotsc & y & a & 6s{+}3 & \dotsc,
\end{array}$$
where $a$ is a numbered vertex. Delete the edges $(a, 0)$, $(0, 6s+3)$, and $(6s+3, (6s+3)+a)$. In the resulting face, add an edge between $y_0$ and $y_1$ and contract it to form a vertex $y$.
\label{con-con8}
\end{construction}

\begin{figure}[htbp]
\centering
\includegraphics[scale=1]{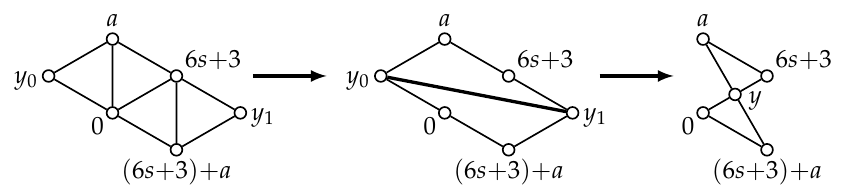}
\caption{Amalgamating the two vertices induced by a (V2) vortex.}
\label{fig-contract8}
\end{figure} 

Construction \ref{con-con8} is visualized in Figure \ref{fig-contract8}. The missing edges are $(x,y)$ and the deleted path of length 3.

\begin{lemma}
For $n \equiv 8 \pmod{12}$, $n \geq 20$, there exists a triangular embedding of $K_n-P_4$. 
\label{lem-orient8}
\end{lemma}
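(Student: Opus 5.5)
We will obtain the embedding from Construction \ref{con-con8}, so the work is to exhibit suitable current graphs and then fix up the missing edges. Write $n = 12s+8$ with $s \geq 1$. We want a current graph with current group $\Z_{12s+6}$ and index 1 or 3, one vortex $x$ of type (V1) and one vortex $y$ of type (V2), satisfying (C1)--(C6). Its derived embedding is then a triangular embedding of $K_{12s+6}$ together with vertices $x, y_0, y_1$. Here $x$ is adjacent to every numbered vertex. The vertices $y_0$ and $y_1$ are adjacent to complementary halves of the numbered vertices, namely the two cosets of the even elements of $\Z_{12s+6}$. The only lettered adjacency that appears is $(x, y_i)$ if $x$ and $y$ are adjacent in the current graph, so we choose $x$ adjacent to $y$.

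For the construction we impose one extra requirement: the log of circuit $[0]$ contains the segment $y, a, 6s{+}3$. Then $y_0$ is adjacent to $a$ and $0$, and $6s+3$ lies in the odd coset, which belongs to $y_1$. Following Construction \ref{con-con8}, we delete $(a,0)$, $(0,6s+3)$ and $(6s+3,6s+3+a)$. This opens a face containing $y_0$ and $y_1$; we join them and contract. The result is a triangulation on $12s+8$ vertices missing exactly $(x,y)$ and the path $a,0,6s+3,6s+3+a$. We have to check that no multiple edges arise, which holds because the neighborhoods of $y_0$ and $y_1$ are disjoint cosets, and that the new face structure is triangular as in Figure \ref{fig-contract8}.

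To make the five missing edges a path $P_4$, we perform one edge flip in the spirit of Construction \ref{con-k2p2}. We look for a vertex $u$ whose rotation contains $\dotsc, x, v, a, \dotsc$ (or the analogous pattern with $6s+3+a$ or $y$). Flipping $(u,v)\to(x,a)$ restores $(x,a)$, and the missing set becomes $(x,y)$, the deleted 3-path, and $(u,v)$. We need $u,v$ chosen so that this set is a path. The cleanest choice is to arrange, through the rotation at $x$ or at $y$ (read off the vortex in the log), that the flip removes an edge $(y,a)$ or $(x, 6s+3+a)$ while adding an edge already missing. The missing set then becomes, for example, the path $y, x, 6s+3+a, 6s+3, 0, a$ with one edge exchanged. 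In practice we will read the rotations at $x$ and $y$ directly from the log and pick the flip accordingly. This may require a second flip, chained as in the notation $(u_1,v_1)\to(u_2,v_2)\to\dotsb$.

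The main obstacle is constructing the infinite family of current graphs: index 3 ladders with globular rungs (Figure \ref{fig-sampleladder}(b)) for general $s$, with the vortex placement forcing the pattern $y,a,6s{+}3$ in circuit $[0]$. The element $6s+3$ has order 2, so we would realize it on a degree-1 vertex satisfying (C4), adjacent to the vertex incident with vortex $y$. We will try to adapt the known index 3 families for $K_{12s+8}$-type problems (as in Jungerman--Ringel's $(n,4)$-triangulations), verify (C2) and (C6) by checking the rung arithmetic sequences, and handle small $s$ (at least $s=1$, $n=20$) with separately drawn current graphs, possibly of index 1.
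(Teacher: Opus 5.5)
Your overall approach matches the paper's: an index 3 current graph on $\mathbb{Z}_{12s+6}$ with a (V1) vortex $x$ and a (V2) vortex $y$, then Construction \ref{con-con8}, then a flip. However, the step that turns the four missing edges into a path does not work as written.

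\textbf{The final flip.} Lettered vertices of a derived embedding are never adjacent to one another, since each subdivides a face bounded only by numbered vertices. So joining $x$ and $y$ in the current graph does not create an edge $(x,y_i)$; that is exactly why $(x,y)$ is among the missing edges. Your flip $(u,v)\to(x,a)$ adds an edge that is already present, because $x$ comes from a (V1) vortex and is adjacent to every numbered vertex. Your count of five missing edges afterwards also cannot be right: a flip, or a chain of flips, deletes one edge and restores one missing edge. The flip must therefore restore one of $(x,y)$, $(a,0)$, $(0,6s+3)$, $(6s+3,6s+3+a)$, and delete an edge that links the other three into a path. Restoring $(0,6s+3)$ never works, since three disjoint edges remain. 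Restoring $(x,y)$ works precisely when the deleted edge is $(u,v)$ with $u\in\{a,6s+3+a\}$ and $v$ off the path. For a single flip, this means the rotation at $u$ reads $\dotsc,x,v,y,\dotsc$. That is your parenthetical ``pattern with $y$'', and it is the right target. Your other options, deleting $(y,a)$ or $(x,6s+3+a)$, cannot restore $(x,y)$, because a flipped edge and its replacement are vertex-disjoint. You also never say which path edge they would restore or what in the current graph forces the needed rotation.

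\textbf{How the paper realizes this.} The $x$--$y$ edge in the current graph helps for a different reason than the one you give. If the log of circuit $[i]$ has an entry $\gamma$ flanked by $x$ and $y$, then every $u\equiv i\pmod 3$ has $u+\gamma$ flanked by $x$ and $y$ in its rotation. The paper's family in Figure \ref{fig-orient8} meets both requirements at once, with $a=3s+1$:
the log of $[0]$ contains $y,3s+1,6s+3$ (the hypothesis of Construction \ref{con-con8}), and the log of $[1]$ has $6s+4$ flanked by $x$ and $y$.
The single flip $(3s+1,9s+5)\to(x,y)$ then leaves the missing path $[9s+5,3s+1,0,6s+3,9s+4]$.

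\textbf{The main gap.} You never produce such a family. ``We will try to adapt known ladders'' is a plan. The content of the lemma is precisely the existence, for every $s\geq 1$, of current graphs satisfying (C1)--(C6) together with these two log conditions. The paper's family includes $s=1$, so no separate small case arises.

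\textbf{A minor point.} In index 3, the two $y$-faces need not consist of the even and odd elements. What holds, and suffices, is that each face meets $3\mathbb{Z}_{12s+6}$ in a single coset of $6\mathbb{Z}_{12s+6}$. Hence $0$ and $6s+3$ lie on different $y$-faces, and the neighborhoods of $y_0$ and $y_1$ are disjoint.
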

\begin{proof}
For $n = 12s+8$, $s \geq 1$, consider the family of current graphs in Figure \ref{fig-orient8}. The vortices $x$ and $y$ are of type (V1) and (V2), respectively. After applying Construction \ref{con-con8} with $a = 3s+1$, we have a triangular embedding of $K_{12s+8}$ with the edges $(3s+1, 0)$, $(0, 6s+3)$, $(6s+3, 9s+4)$, and $(x,y)$ missing. Flipping the edge $(3s+1, 9s+5) \to (x, y)$ yields the desired embedding.

\begin{figure}[htbp]
\centering
\includegraphics[scale=1]{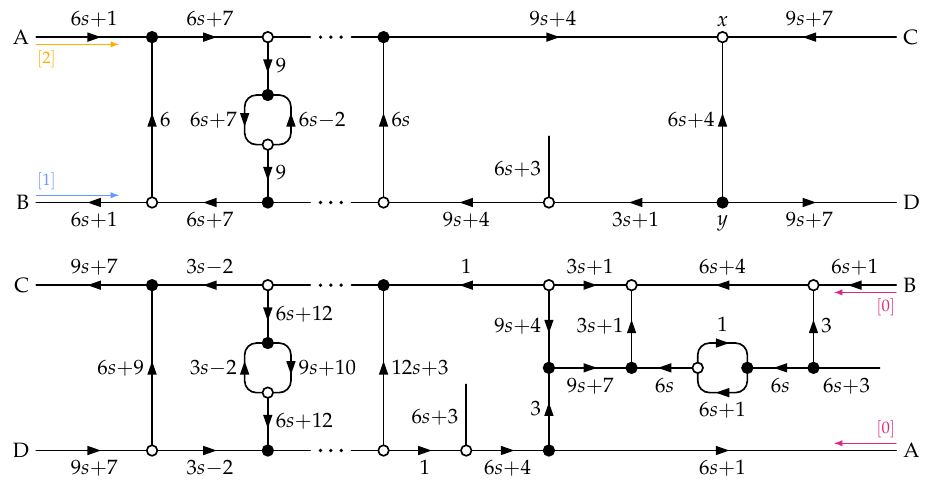}
\caption{Index 3 current graphs with group $\mathbb{Z}_{12s+6}$, $s \geq 1$.}
\label{fig-orient8}
\end{figure}
\end{proof}

\begin{lemma}
For $n \equiv 11 \pmod{12}$, $n \geq 11$, there exists a triangular embedding of $K_n-P_4$. 
\label{lem-orient11}
\end{lemma}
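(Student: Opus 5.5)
We model the proof on Lemma \ref{lem-orient8}, now with $n = 12s+11$. Here $K_n - P_4$ has $\binom{n}{2}-4$ edges, and a triangular embedding requires $\binom{n}{2}-4-3n+6 \equiv 0 \pmod 6$. For $n = 12s+11$ we have $\binom{n}{2} = (12s+11)(6s+5) \equiv 55 \equiv 1 \pmod 6$ and $3n \equiv 3 \pmod 6$, so the quantity is $1-4-3+6 = 0 \pmod 6$, as required.

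The natural source is a current graph of index $2$ (the checkerboard ladders announced for Figure \ref{fig-orient11}) with current group $\Z_{12s+8}$. Its derived embedding has $12s+8$ numbered vertices, and three lettered vertices are needed to reach $12s+11$. I would use one vortex of type (V1), labeled $x$, and one of type (V2), labeled $y$, which subdivides into $y_0$ and $y_1$. This gives a triangular embedding of $K_{12s+11}$ minus the three edges $(x,y_0)$, $(x,y_1)$ and $(y_0,y_1)$, that is, of $K_n - K_3$.

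From there I would convert the missing $K_3$ into a missing $P_4$ by edge flips, in the style of Construction \ref{con-k2p2}. Alternatively, I would follow an analogue of Construction \ref{con-con8}: delete a short path of numbered edges near $y$ in circuit $[0]$ so that $y_0$ and $y_1$ become co-facial, add the edge $(y_0,y_1)$, and contract it. That would produce a group $\Z_{12s+10}$-type count, which does not match, so I prefer the first route and keep $y_0$ and $y_1$ separate.

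\textbf{Step 1 (triangular embedding of $K_n-K_3$).} Choose the index $2$ ladder on $\Z_{12s+8}$ so that conditions (C1)--(C6) hold. The circuits' logs must also contain patterns $\dotsc\, x\; u\; v\; \dotsc$ such that, by the additivity rule, some numbered vertex $u$ has rotation $\dotsc\, y_0\; v\; x\, \dotsc$ and another has $\dotsc\, y_1\; w\; x\, \dotsc$.

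\textbf{Step 2 (two flips).} Flip $(u,v)\to(x,y_0)$ and then $(u',w)\to(x,y_1)$. The missing edges become $(y_0,y_1)$, $(u,v)$ and $(u',w)$. To make these four vertices form a path with the right number of edges, I would instead flip only one edge to restore $(x,y_0)$. This leaves the missing set $\{(x,y_1),(y_0,y_1),(u,v)\}$, which is $K_2\cup P_2$, or $P_3$ if $v=y_0$ is impossible.

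Since a $P_4$ has four edges, the count must actually be four missing edges. The correction is therefore to start from a $K_n-K_3$ embedding with an extra deleted edge: delete one numbered edge $(a,b)$ and create a quadrangle, as in Construction \ref{con-con8}. Then the flips are chosen so that the four missing edges are $(y_1,y_0)$, $(y_0,x)$ or a rerouted version, and $(x,a)$, $(a,b)$, chained along a path. The parity computation above guarantees that the face count works out.

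\textbf{Small cases.} For $n=11$ (and possibly $n=23$, if the ladder needs $s\ge 2$) I would give an explicit rotation system, as in Proposition \ref{prop-k9}, or a sporadic current graph.

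\textbf{Main obstacle.} The hard part is designing the ladder family for all $s$ so that the required local rotation patterns appear uniformly, allowing the flip sequence to land the missing edges exactly on a path. I would search small $s$ by computer and then generalize the rungs.
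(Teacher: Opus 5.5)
\textbf{There is a genuine gap, and it is already in Step 1.} A triangular embedding of $K_n-K_3$ with $n=12s+11$ cannot exist.

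\emph{Why the starting embedding is impossible.} We have $\binom{n}{2}=(12s+11)(6s+5)\equiv 1 \pmod 3$, so $K_n-K_3$ has $\binom{n}{2}-3\equiv 1\pmod 3$ edges. A triangulation, however, satisfies $2|E|=3|F|$, so its edge count is divisible by $3$. Redoing your own divisibility check with $t$ missing edges shows that an orientable triangulation of $K_{12s+11}$ minus $t$ edges forces $t\equiv 4\pmod 6$.

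\emph{Why your current graph would not give it anyway.} A (V2) vortex generates two $(m/2)$-gons. So $y_0$ and $y_1$ are each adjacent to only half of the numbered vertices, which is exactly why Construction \ref{con-con8} amalgamates them.

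\emph{Why Step 2 cannot repair the count.} Each edge flip deletes one edge and adds one, so flips never change the number of missing edges. Deleting an extra numbered edge $(a,b)$ leaves a quadrangular face, so the embedding is no longer triangular. The appeal to ``the parity computation'' does not produce an embedding. Since the ladder design and the case $n=11$ are also deferred to an unspecified search, nothing is actually constructed.

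\textbf{The missing idea is that reaching four missing edges from a legal starting count requires a change of genus.} The paper proceeds as follows.
\begin{enumerate}
\item It starts from an orientable triangular embedding of $K_n-K_5$. This has ten missing edges, and $10\equiv 4\pmod 6$. The embedding is derived from index $2$ current graphs with group $\mathbb{Z}_{12s+6}$ and five (V1) vortices $a,b,c,d,e$, with a figure of Sun used for $s=1$.
\item It finds a numbered vertex $u$ whose rotation has the form $\dotsc\, a\, v_1\, b \dotsc c\, v_2\, d\, v_3\, e\, \dotsc$.
\item It flips $(u,v_1)\to(a,b)$, $(u,v_2)\to(c,d)$, $(u,v_3)\to(d,e)$, and $(u,d)\to(c,e)$.
\item It attaches one handle between the faces $[u,b,a]$ and $[c,e,d]$. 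Triangulating the tube restores six edges: $(u,d)$ and every missing lettered pair except $(b,c)$.
\end{enumerate}
Exactly four edges then remain missing: $(b,c)$, $(u,v_1)$, $(u,v_2)$, $(u,v_3)$. A short further sequence of flips, specific to $s=1$, even $s$, and odd $s$, rearranges them into a path. The case $n=11$ is settled by an explicit rotation system.
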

\begin{proof}
For $n = 11$, we use the following triangular rotation system, where the missing edges form the path $[0, 1, 2, 3, 4]$:
$$\begin{array}{rccccccccccccccccccccccccccccccccc}
0. & 2 & 5 & 4 & 10 & 9 & 3 & 6 & 8 & 7 \\
1. & 3 & 9 & 8 & 5 & 10 & 4 & 6 & 7 \\
2. & 0 & 7 & 10 & 8 & 6 & 4 & 9 & 5 \\
3. & 0 & 9 & 1 & 7 & 5 & 8 & 10 & 6 \\
4. & 0 & 5 & 7 & 8 & 9 & 2 & 6 & 1 & 10 \\
5. & 0 & 2 & 9 & 6 & 10 & 1 & 8 & 3 & 7 & 4 \\
6. & 0 & 3 & 10 & 5 & 9 & 7 & 1 & 4 & 2 & 8 \\
7. & 0 & 8 & 4 & 5 & 3 & 1 & 6 & 9 & 10 & 2 \\
8. & 0 & 6 & 2 & 10 & 3 & 5 & 1 & 9 & 4 & 7 \\
9. & 0 & 10 & 7 & 6 & 5 & 2 & 4 & 8 & 1 & 3 \\
10. & 0 & 4 & 1 & 5 & 6 & 3 & 8 & 2 & 7 & 9
\end{array}$$

For $n = 12s+11$, $s \geq 1$, we start with the triangular embeddings of $K_n - K_5$ in Figure \ref{fig-orient11} for $s \geq 2$, and Figure 25 of Sun \cite{Sun-Minimum} for $s = 1$. The vortices $a, b, c, d, e$ in these current graphs are all of type (V1). In each rotation system, there is a vertex $u$ whose rotation is of the form 

$$\begin{array}{rccccccccccccccccc}
u. & \dotsc & a & v_1 & b & \dotsc & c & v_2 & d & v_3 & e & \dotsc \\
\end{array}$$

Successively flip the edges $(u, v_1) \to (a, b)$, $(u, v_2) \to (c, d)$, $(u, v_3) \to (d, e)$, $(u, d) \to (c, e)$. Then, by connecting the faces $[u, b, a]$ and $[c, e, d]$ with a handle like in Figure \ref{fig-handle11}, one can add back $(u, d)$ and all of the missing edges between the lettered vertices except $(b, c)$. At this point, the missing edges are $(b, c)$, $(u, v_1)$, $(u, v_2)$, and $(u, v_3)$. 

For $s = 1$, the numbered vertices are $(u, v_1, v_2, v_3) = (1, 3, 9, 8)$. We apply the sequences of edge flips $(3, 16) \to (11, 17) \to (1, 8)$ and $(2, 16) \to (b, c)$. Now, the missing edges form the path $[2, 16, 3, 1, 9]$. For the larger cases, $(u, v_1, v_2, v_3)$ are $(0, 12s+3, 6s+5, 6s+9)$ for even $s \geq 2$ and $(0, 6s+5, 9s+4, 3s+2)$ for odd $s \geq 3$. The sequences of edge flips starting by deleting $(c, v_3)$ are
$$(c, 6s+9) \to (4, 6s+6) \to (0, 6s+5) = (u, v_2)$$
for even $s$, and 
$$(c, 3s+2) \to (6s+4, 6s+6) \to (0, 6s+5) = (u, v_1)$$
for odd $s$. The embeddings now have the missing path $[b, c, 6s+9, 0, 12s+3]$ or $[b, c, 3s+2, 0, 9s+4]$, respectively.

\begin{figure}[htbp]
\centering
    \begin{subfigure}[b]{0.99\textwidth}
        \centering
        \includegraphics[scale=0.95]{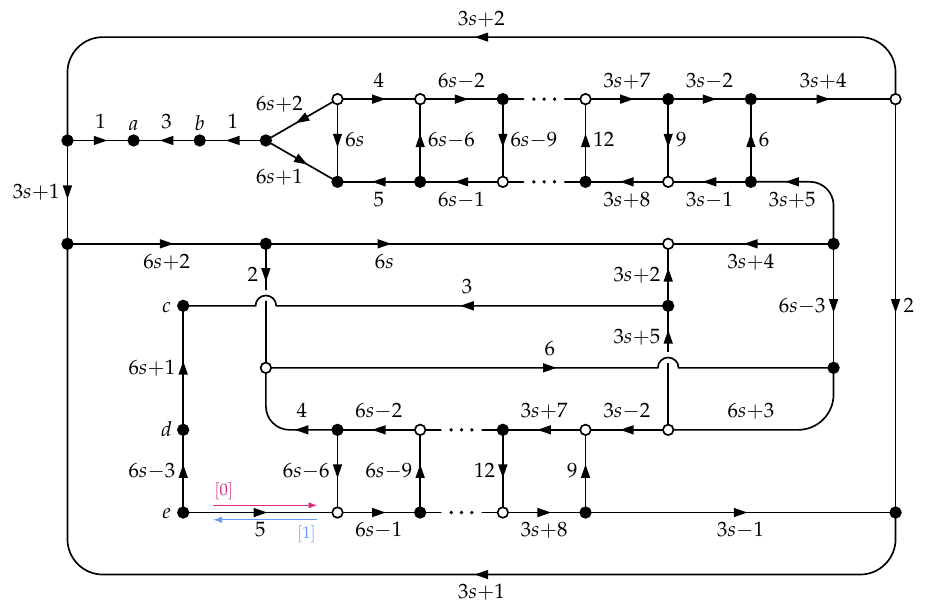}
        \caption{}
    \end{subfigure}
    \begin{subfigure}[b]{0.99\textwidth}
        \centering
        \includegraphics[scale=0.95]{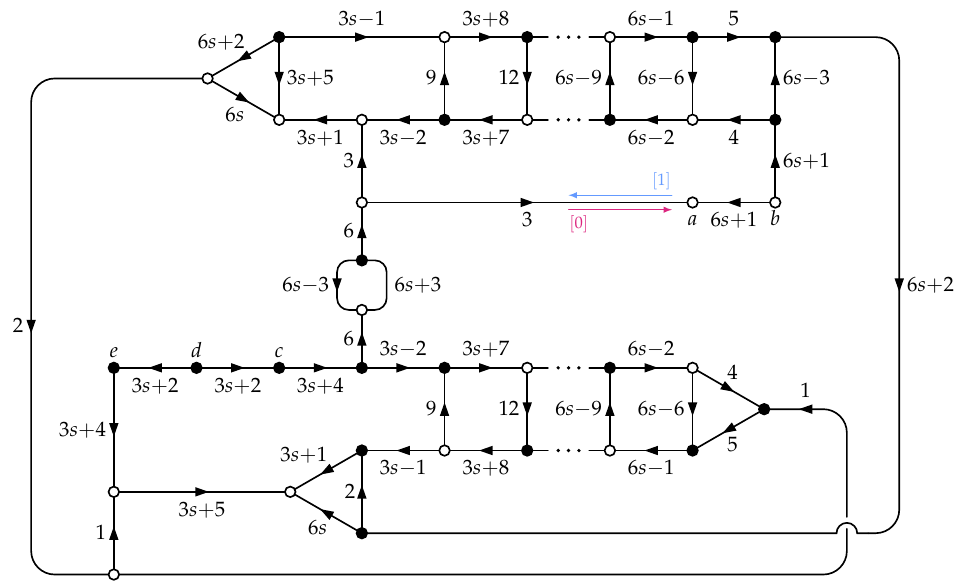}
        \caption{}
    \end{subfigure}
\caption{Families of index 2 current graphs with current group $\mathbb{Z}_{12s+6}$, for even $s \geq 2$ (a) and odd $s \geq 3$ (b).}
\label{fig-orient11}
\end{figure}

\begin{figure}[htbp]
\centering
    \begin{subfigure}[b]{0.75\textwidth}
        \centering
        \includegraphics[scale=1]{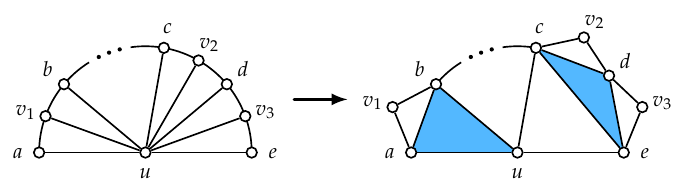}
        \caption{}
    \end{subfigure}
    \begin{subfigure}[b]{0.24\textwidth}
        \centering
        \includegraphics[scale=1]{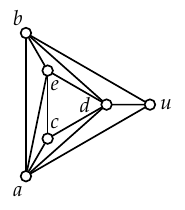}
        \caption{}
    \end{subfigure}
\caption{Adding most of the edges between the lettered vertices with one handle.}
\label{fig-handle11}
\end{figure}
\end{proof}

The final case is $n = 8$. As proven by Duke and Haggard \cite{DukeHaggard}, $K_8-P_4$ does not embed in the torus, but $K_8-C_5$ does.

\begin{proposition}[Duke and Haggard \cite{DukeHaggard}]
The genus of $K_8-C_5$ is 1. 
\label{prop-k8}
\end{proposition}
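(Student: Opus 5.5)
The statement has two halves: a lower bound, which is a counting argument, and an upper bound, which needs an explicit toroidal embedding.

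\emph{Lower bound.} The graph $K_8-C_5$ has $8$ vertices and $\binom{8}{2}-5=23$ edges. Corollary \ref{cor-eulerlower} gives
$$\gamma(K_8-C_5)\geq \left\lceil \frac{23-24+6}{6}\right\rceil = 1.$$
Equivalently, by Proposition \ref{prop-eulerupper} a planar graph on $8$ vertices has at most $18<23$ edges, so $K_8-C_5$ is not planar.

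\emph{Upper bound.} I would write down an explicit rotation system for $K_8-C_5$ in the torus. On $8$ vertices with $23$ edges, Euler's formula with $\chi=0$ forces $15$ faces. Since $2\cdot 23=46=14\cdot 3+4$, the natural target is fourteen triangles and one quadrangle.

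To find this embedding, I would start from a triangulation of the torus on $8$ vertices. Such a triangulation has $24$ edges, so its graph is $K_8$ minus a $4$-edge subgraph $R$. Deleting one edge of it then leaves $23$ edges with exactly the face structure above. The constraint is that $R$ together with the deleted edge must form a $5$-cycle, so $R$ must be a path $P_4$ whose endpoints lie on a common triangle through the closing edge. Duke and Haggard's result that $K_8-P_4$ is not toroidal rules this out: no such triangulation exists. The construction must therefore use a different face structure, namely a genuine non-triangulation whose nontriangular face is a quadrangle not obtained by deleting an edge from a triangulation of $K_8-P_4$.

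Concretely, I would label the missing cycle $[0,1,2,3,4]$ and the other vertices $5,6,7$. Vertices $5,6,7$ are adjacent to everything, with degree $7$, while $0,\dots,4$ have degree $5$. I would search by hand, or by a short computer check, for a rotation system whose face tracing yields fourteen triangles and one $4$-face. One plausible shape is a $4$-face $[0,a,2,b]$ containing the nonedge $(0,2)$, which is a chord of the $C_5$. As a guide, I would embed $K_{3,5}$, joining $\{5,6,7\}$ to the cycle vertices, as a quadrangulation-like skeleton on the torus, then insert the chords among $0,\dots,4$ and the triangle $5,6,7$. Checking a candidate is routine: trace the faces, count $15$ of them, and verify $V-E+F=0$.

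The main obstacle is actually producing this rotation system, since the nonexistence of a toroidal $K_8-P_4$ rules out the easy route through Proposition \ref{prop-del}. Alternatively, the statement could simply be cited from Duke and Haggard \cite{DukeHaggard}, as the attribution suggests.
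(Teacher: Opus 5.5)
\paragraph{Verdict.} Your lower bound is correct. The upper bound, which is the actual content, has a gap: you describe a search for a toroidal rotation system of $K_8-C_5$ but never exhibit one. The paper gives no construction either; it simply attributes the result to Duke and Haggard. So if you fall back on that citation you match the paper, and your upper-bound paragraph becomes commentary rather than proof.

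\paragraph{What is right.} Two of your observations are correct and useful. The counting forces fourteen triangles and one quadrangle. Both diagonals of the quadrangle must be edges, since otherwise adding the missing diagonal would give a toroidal triangulation of $K_8-P_4$.

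\paragraph{Why your suggested face cannot occur.} The one concrete hint you give points somewhere that cannot work.
\begin{itemize}
\item First, $(0,2)$ is not a nonedge. Only the edges of the cycle $[0,1,2,3,4]$ are removed, so its chords, $(0,2)$ among them, are edges of $K_8-C_5$.
\item Second, a face $[0,a,2,b]$ is impossible. The only common neighbors of $0$ and $2$ are $5,6,7$, so $a,b\in\{5,6,7\}$.
\item The edge $(0,2)$ is not a side of this quadrangle, so it lies on two triangular faces $[0,2,z_1]$ and $[0,2,z_2]$ with $z_1\neq z_2$ in $\{5,6,7\}$. Hence $\{z_1,z_2\}$ must meet $\{a,b\}$.
\item If, say, $z_1=a$, then this triangle and the quadrangle are distinct faces that both pass through the corner at $a$ formed by $(0,a)$ and $(a,2)$. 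That forces $a$ to have degree $2$, but it has degree $7$.
\end{itemize}

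\paragraph{A construction that works.} A quadrangle with $0$ and $2$ as adjacent corners does work. Consider the rotation system below, with all edges normal. It has exactly the neighborhoods of $K_8-C_5$. Tracing faces with the paper's rule (after $u\to v$, take the arc after $v\to u$ in the rotation of $v$) gives the quadrangle $[0,2,5,6]$ and the fourteen triangles
$[0,7,2]$, $[0,5,7]$, $[0,3,5]$, $[0,6,3]$, $[2,4,5]$, $[2,6,4]$, $[2,7,6]$, $[1,7,4]$, $[1,4,6]$, $[4,7,5]$, $[1,3,7]$, $[1,5,3]$, $[1,6,5]$, $[3,6,7]$:
$$\begin{array}{rccccccc}
0. & 2 & 7 & 5 & 3 & 6 \\
1. & 7 & 3 & 5 & 6 & 4 \\
2. & 7 & 0 & 5 & 4 & 6 \\
3. & 6 & 0 & 5 & 1 & 7 \\
4. & 6 & 2 & 5 & 7 & 1 \\
5. & 1 & 3 & 0 & 7 & 4 & 2 & 6 \\
6. & 1 & 5 & 0 & 3 & 7 & 2 & 4 \\
7. & 4 & 5 & 0 & 2 & 6 & 3 & 1
\end{array}$$
Each of the $23$ edges lies on exactly two of these $15$ faces, traversed in opposite directions. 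Then $8-23+15=0$ and the rotation system is orientable, so this is an embedding in the torus. It supplies the missing upper bound without relying on the citation.
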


By combining Propositions \ref{prop-skrek}, \ref{prop-k9}, and \ref{prop-k8} and Lemmas \ref{lem-16910}, \ref{lem-orient8}, and \ref{lem-orient11}, we obtain the desired genus formula:

\begin{theorem}
For all $n \geq 5$, the orientable genus of $K_n - C_5$ matches its Euler lower bound. 
\label{thm-orient}
\end{theorem}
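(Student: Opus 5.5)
The proof is a case analysis on the residue of $n$ modulo $12$. The lower bound is Corollary \ref{cor-eulerlower}: since $K_n - C_5$ has $\binom{n}{2}-5$ edges, the bound is
$$\left\lceil \frac{\binom{n}{2}-5-3n+6}{6}\right\rceil = \left\lceil \frac{n^2-7n+2}{12}\right\rceil .$$
So it suffices, for every $n \geq 5$, to exhibit an orientable embedding of $K_n - C_5$ of this genus.

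For $n \equiv 0, 2, 3, 4, 5, 7 \pmod{12}$ with $n \geq 5$, this is Proposition \ref{prop-skrek}. For every other $n$ (except a few small values), I will take an orientable triangular embedding of a graph $K_n - H$, where $H$ has at most $4$ edges and is contained in some $C_5$ on the same vertex set. I then delete the remaining edges of that $C_5$, which is at most $5-|E(H)|$ further edges. Proposition \ref{prop-del} then says the result is a genus embedding.

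\begin{itemize}
\item If $H = P_4$, the path $[v_0,\dots,v_4]$ closes to a $5$-cycle by deleting the single edge $(v_4,v_0)$.
\item If $H = P_3$ or $H = K_2 \cup P_2$, the three missing edges extend to a $5$-cycle on five vertices (using fresh vertices when $n \geq 5$). This needs two more deletions.
\end{itemize}

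The only hypothesis of Proposition \ref{prop-del} to verify is $6$-edge-connectivity of $K_n - H$. This holds once $n$ is moderately large, because deleting at most four edges from $K_n$ leaves minimum degree at least $n-3$ and edge-connectivity at least $n-5$. For the smallest orders I will check connectivity directly, or note that only connectedness of the graph after deletion is actually used in the proof of Proposition \ref{prop-del}.

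The residues are then covered as follows.
\begin{itemize}
\item $n \equiv 1, 6, 9, 10 \pmod{12}$ with $n \neq 9$: Lemma \ref{lem-16910}.
\item $n \equiv 8 \pmod{12}$ with $n \geq 20$: Lemma \ref{lem-orient8}.
\item $n \equiv 11 \pmod{12}$: Lemma \ref{lem-orient11}.
\item $n = 9$ and $n = 8$: Propositions \ref{prop-k9} and \ref{prop-k8} give the genus directly, and I will confirm that $2$ and $1$ agree with the formula.
\end{itemize}

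The main obstacle is bookkeeping rather than any new construction. I must make sure that every residue and every small exceptional $n$ is covered. For instance, $n = 6$ goes through $K_6 - P_3$, where the deletion argument needs only connectivity. I must also make sure that in each case the missing edges really do lie on a common $5$-cycle.
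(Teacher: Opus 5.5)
Your proposal is correct and follows the paper's argument. The paper proves this theorem simply by combining Propositions \ref{prop-skrek}, \ref{prop-k9}, \ref{prop-k8} and Lemmas \ref{lem-16910}, \ref{lem-orient8}, \ref{lem-orient11}, with Proposition \ref{prop-del} turning the triangular embeddings of $K_n-P_3$, $K_n-(K_2\cup P_2)$ and $K_n-P_4$ into genus embeddings of $K_n-C_5$; your extra bookkeeping (completing the missing edges to a $5$-cycle, and noting that only connectedness of $K_n-C_5$ is needed, which handles $n=6$) makes explicit what the paper leaves implicit.
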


\section{A detour for nonorientable minimum triangulations}\label{sec-min}

Recall that Ringel \cite{Ringel-MinNon} found nonorientable minimum triangulations via a process we called crosscap subtraction. More specifically, the edges of paths of length 3 are deleted, and the rotation system is modified to keep the embedding triangular:

\begin{construction}[Ringel \cite{Ringel-MinNon}]
Suppose the rotation system of a (necessarily nonorientable) triangular embedding of $G$ is of the form
$$\begin{array}{rlllllllllllllllllllllllllllll}
b. & \dotsc & p & d & c & a & \dotsc \\
\end{array}$$
and
$$\begin{array}{rlllllllllllllllllllllllllllll}
p. & \dotsc & a & c & \dotsc & b & d & \dotsc \\
\end{array}$$
for some edge signature. Then, delete the edges of the path $[a, c, b, d]$, reverse the subsequence $c \dotsc b$ in the rotation of $p$, and switch the signature of each edge $(p, c) \dotsc, (p, b)$. The resulting embedding is a (possibly orientable) triangular embedding of $G - \{(a, c), (c, b), (b, d)\}$. 
\label{con-subtract}
\end{construction}

One can show that the original embedding has to be nonorientable and that the final embedding is triangular, most easily by observing that it is the reverse of the operation shown in Figure \ref{fig-crossadd}. The edges $(p, c), \dotsc, (p, b)$ are passed through a crosscap, merging the faces $[p, b, a]$ and $[p, d, c]$, which is then triangulated with the edges of the path in Figure \ref{fig-crossadd}(b). We call this configuration of four faces a \emph{subtractible crosscap} with vertices $(b, p, d, c, a)$. For the minimum triangulations problem, we need to subtract multiple crosscaps to obtain $(n,t)$-triangulations for large values of $t$. To this end, we say that two or more subtractible crosscaps are \emph{independent} if none of the crosscaps share any faces. Construction \ref{con-subtract} affects the four faces in a subtractible crosscap, but since it leaves the other faces intact, it can be successively applied to a set of pairwise independent subtractible crosscaps. 

\begin{figure}[htbp]
\centering
    \begin{subfigure}[b]{0.69\textwidth}
        \centering
        \includegraphics[scale=1]{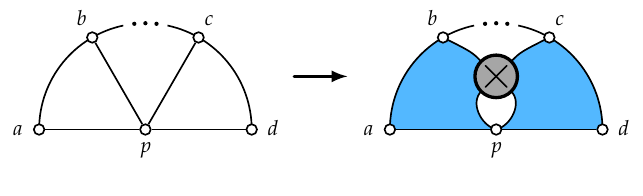}
        \caption{}
    \end{subfigure}
    \begin{subfigure}[b]{0.30\textwidth}
        \centering
        \includegraphics[scale=1]{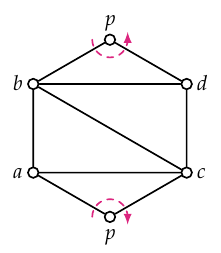}
        \caption{}
    \end{subfigure}
\caption{The reverse of crosscap subtraction.}
\label{fig-crossadd}
\end{figure}

\begin{remark}
At the time, Ringel did not have the concept of an edge signature, instead relying on two equivalent properties of rotations called ``Rule R'' and ``Rule $\Delta$'' (see Ringel \cite[p.131]{Ringel-MapColor}). In modern terms, if the rotation system satisfies these rules, then there is an edge signature which makes the embedding triangular. 
\end{remark}

In the orientable minimum triangulations problem, Jungerman and Ringel \cite{JungermanRingel-Minimal} describe a certain ladder substructure in a current graph that allows for ``handle subtraction.'' We describe an analogue for crosscap subtraction, which we call the \emph{broken 2-ladder}, shown in Figure \ref{fig-broken}. Crucially, the same circuit traverses the rungs of this ladder (the edges with currents $\pm \gamma$ and $\pm 2\gamma$), and the rungs are unidirectional. We note that the edges of a broken 2-ladder may appear to form a small ``local'' substructure in the graph, but the conditions on the circuits are ``global'' in that the behavior of the circuit as it passes through the broken 2-ladder depends on the rest of the current graph. We also observe that, since we can flip vertices, the signatures of the edges in a broken 2-ladder do not necessarily follow Figure \ref{fig-broken}. The presentation shown here indicates how the broken 2-ladder was discovered: as a fragment of some of the current graphs found in Ringel \cite{Ringel-MapColor}.

\begin{figure}[htbp]
\centering
\includegraphics[scale=1.2]{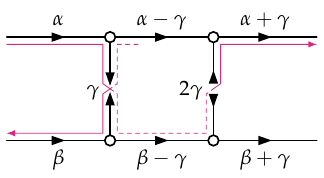}
\caption{A fragment of a current graph that induces subtractible crosscaps.}
\label{fig-broken}
\end{figure}

\begin{lemma}
Suppose we have a current graph of index $k$ and current group $\mathbb{Z}_{ks}$ that generates a triangular embedding. If it has a broken 2-ladder where $k$ divides the currents on the rungs of the broken 2-ladder, then the embedding has $s$ independent subtractible crosscaps.  
\label{lem-broken}
\end{lemma}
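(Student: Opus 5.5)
We will read the subtractible crosscaps directly off the rotation system produced by the additivity rule. Without the figure we have to fix notation. Suppose the broken 2-ladder has rungs carrying currents $\gamma$ and $2\gamma$, both traversed by a single circuit $[i]$. Between the rungs there are trivalent vertices satisfying Kirchhoff's current law, and the horizontal arcs joining them carry some currents $\beta,\beta+\gamma,\dots$ (read off Figure \ref{fig-broken}).

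\textbf{Step 1: find one subtractible crosscap at a vertex $v\equiv i \pmod k$.} The log of $[i]$ passes through the broken 2-ladder. Its relevant portion has the shape $\dots, p', d', c', a', \dots$ near one rung and $\dots, a'', c'', \dots, b'', d'', \dots$ near the other. Adding $v$ to these entries gives the rotation of the numbered vertex $v$. Kirchhoff's law at the trivalent vertices tells us which faces appear in that rotation. We will identify the five vertices $(b,p,d,c,a)$ as explicit translates of $v$, such as $v$, $v+\gamma$, $v+2\gamma$ and $v$ plus a horizontal current. These must satisfy two conditions. First, the rotation of $p$ has the form $\dots a\,c\dots b\,d\dots$. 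Second, the rotation of $b$ has the form $\dots p\,d\,c\,a\dots$.

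\textbf{Step 2: justify the use of the rotation at $b$.} This is where $k\mid\gamma$ is needed. Vertices $v$ and $v+\gamma$ (and $v+2\gamma$) then lie in the same residue class mod $k$. By the additivity rule their rotations are translates of the same log $[i]$. Hence a local pattern in $[i]$ gives the required pattern in the rotations of both $p$ and $b$ simultaneously. The unidirectionality of the rungs makes the edges $(v,v\pm\gamma)$ and $(v,v\pm 2\gamma)$ twisted. As Construction \ref{con-subtract} requires, this makes the configuration of four faces $[p,b,a]$, $[p,d,c]$ and the two triangles containing the path $[a,c,b,d]$ the picture of Figure \ref{fig-crossadd}(b) with a genuine crosscap. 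We will check that the face-tracing with these signatures produces exactly these four triangles. The behavior reversal along the twisted rungs is what produces the reversed order $c\dots b$ in the rotation of $p$.

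\textbf{Step 3: produce $s$ crosscaps and show independence.} Take the crosscap from Step 1 for each $v\in\{i, i+k, i+2k,\dots\}$. This gives $s$ translates $v=i+jk$, $j=0,\dots,s-1$, and hence $s$ configurations. Each face of the derived embedding corresponds to a triangle generated at a Kirchhoff vertex (up to the translation action of $\Z_{ks}$). The four faces of the configuration based at $v$ are translates by $v$ of four specific triangles $T_1,\dots,T_4$ arising from the broken 2-ladder's vertices. Two configurations based at $v\ne v'$ share a face only if some $v+T_r=v'+T_{r'}$. We will rule this out by showing that the four triangles $T_r$ lie in distinct orbits under translation. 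We will also show that each orbit of triangles has full size $ks$, or at least that its stabilizer meets $k\Z_{ks}$ trivially. This holds because the triangles come from distinct degree-3 vertices, and a triangle fixed by a nonzero translation would need currents of order 3 (cf. (C4)). We must additionally check that the edge-deletions never remove an edge twice, and that the same edge does not serve two crosscaps. This follows from face-disjointness, since each deleted edge is interior to the union of its crosscap's faces.

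The main obstacle is Step 2: matching the orientation and behavior conventions precisely. We must confirm that, with unidirectional rungs, the log of $[i]$ reversed along the alternate behavior really yields the order $a\,c\dots b\,d$ at $p$ and $p\,d\,c\,a$ at $b$. We must also confirm that the actual vertex labels are consistent after vertex flips. We will handle this by computing everything for the signature in Figure \ref{fig-broken} and noting that flipping a vertex of the current graph changes neither the derived embedding nor the set of faces.
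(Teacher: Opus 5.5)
Your proposal is correct and is essentially the paper's proof. The paper writes the log of circuit $[0]$ as $\dots,\alpha,\gamma,\gamma-\beta,2\gamma,\alpha+\gamma,\dots,-\gamma,-\beta,\dots$ and uses $k\mid\gamma$ (which it notes is forced anyway by (C6), since one circuit traverses each rung twice), so that the rotation at $\gamma$ is the same log shifted by $\gamma$. It reads off $(b,p,d,c,a)=(0,\gamma,\gamma-\beta,2\gamma,\alpha+\gamma)$, translates by multiples of $k$, and gets independence as you plan: the four faces come from four distinct current-graph vertices, each generating $s$ faces.

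The signature and face-tracing check you flag as the main obstacle is unnecessary. In a triangular embedding of a simple graph, consecutive rotation entries already determine the faces, so the two rotation patterns are all Construction~\ref{con-subtract} needs. Also, $[p,b,a]$ and $[p,d,c]$ are the faces after subtraction; the four faces being replaced are $[b,p,d],[b,d,c],[b,c,a],[p,a,c]$.
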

\begin{proof}
Suppose, without loss of generality, that circuit $[0]$ passes through the rungs of the broken 2-ladder. Its log is of the form
$$\begin{array}{rlllllllllllllllllllllllllllll}
\dotsc & \alpha & \gamma & (\gamma-\beta) & 2\gamma & (\alpha+\gamma) & \dotsc & -\gamma & -\beta & \dotsc
\end{array}$$
Since the same circuit traverses each rung twice, by property (C6), $\gamma$ is a multiple of $k$. Thus, the rotation at $\gamma$, which is also generated by this log, is of the form
$$\begin{array}{rlllllllllllllllllllllllllllll}
\dotsc & (\alpha+\gamma) & 2\gamma & \dotsc & 0 & (\gamma-\beta) & \dotsc
\end{array}$$
This is the form in Construction \ref{con-subtract}, where $b = 0$, $p = \gamma$, $d = \gamma-\beta$, $c = 2\gamma$, and $a = \alpha+\gamma$. By the additivity rule, we find $s$ subtractible crosscaps in total. Since each of the four faces modified by Construction \ref{con-subtract} is induced by a distinct vertex in the broken 2-ladder, and each such vertex generates $s$ triangular faces, none of the subtractible crosscaps share any faces and hence are all pairwise independent.
\end{proof}

In the interest of minimizing the number of distinct families of current graphs used in this work, for some residues, we leverage the inductive construction described in Section 10.2 of Ringel \cite{Ringel-MapColor}. For the family of current graphs used in that construction, Figure \ref{fig-induction} provides an alternative current assignment that the author believes is simpler to verify, but splits into two cases depending on parity. 

\begin{figure}[htbp]
\centering
    \begin{subfigure}[b]{0.99\textwidth}
        \centering
        \includegraphics[scale=1]{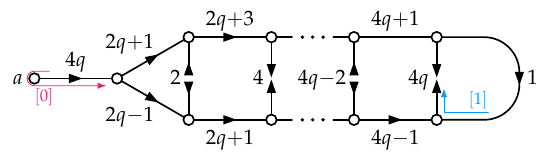}
        \caption{}
    \end{subfigure}
    \begin{subfigure}[b]{0.99\textwidth}
        \centering
        \includegraphics[scale=1]{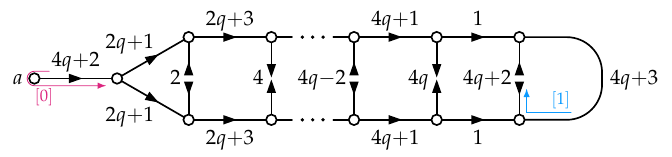}
        \caption{}
    \end{subfigure}
\caption{Current graphs with group $\mathbb{Z}_{8q+2}$ and $\mathbb{Z}_{8q+6}$ used in an inductive construction.}
\label{fig-induction}
\end{figure}

\begin{lemma}
For $r \geq 2$, if there exists a $(2r+2, t)$-triangulation with one vertex adjacent to every other vertex, then there exists a nonorientable $(4r+2, t+3i)$-triangulation, for any $i = 0, 1, \dotsc, 2r$. 
\label{lem-induction}
\end{lemma}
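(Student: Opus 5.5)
We plan to follow the inductive construction of Section 10.2 of Ringel \cite{Ringel-MapColor}, using the current graphs of Figure \ref{fig-induction}. The group is $\Z_{8q+2}$ or $\Z_{8q+6}$, and this order is exactly $4r+2-(2r+2)+\dots$. More precisely, the current graph has index $2$ and current group $\Z_{2r+2}$, the parity of $r$ selecting which of the two families is used. Its circuits log every nonzero element. It also carries a distinguished vortex-like structure: a set of ``vortices'' whose faces in the derived embedding form $2r+2$ Hamiltonian-type faces, i.e.\ the faces are wheels/holes around the numbered vertices. These current graphs do not satisfy (C1)--(C6), which is why Lemma \ref{lem-induction} was excluded there.

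\textbf{Step 1: derive the embedding and fill the holes.} First we derive an embedding of $K_{2r+2}$ on the numbered vertices, together with $2r$ additional lettered vertices. In this embedding all faces are triangles except for the faces generated by the vortices. Each vortex face is then filled by a copy of the given $(2r+2,t)$-triangulation. To do this, we delete the vertex adjacent to all others in that triangulation; this leaves a disk bounded by a $(2r+1)$-cycle. The disk is glued into the face after matching the boundary cyclic order with the face boundary. This matching is exactly why the hypothesis requires a dominating vertex. The resulting triangular embedding lives on $4r+2$ vertices, and its only missing edges are the $t$ edges missing from the glued-in triangulation. So far this gives a $(4r+2,t)$-triangulation, the case $i=0$. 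I would verify nonorientability directly from the current graph, by exhibiting a cycle with an odd number of twisted (unidirectional) edges.

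\textbf{Step 2: subtract crosscaps $2r$ times.} Next I would exhibit $2r$ pairwise independent subtractible crosscaps in the derived embedding, each disjoint from the glued-in region. The natural source is a broken 2-ladder in Figure \ref{fig-induction} whose rung currents are divisible by the index. Lemma \ref{lem-broken} does not apply verbatim because the current graph is nonstandard, but its proof is local to circuit logs. So I would redo that computation: read off from the log the pattern $\alpha,\gamma,\gamma-\beta,2\gamma,\alpha+\gamma,\dots,-\gamma,-\beta$ and apply the additivity rule. This should produce $s=(2r+2)/2$ crosscaps per ladder, and several ladders together supply at least $2r$. I would then check that these $2r$ crosscaps involve only faces induced by current-graph vertices, so they avoid the vortex faces and are pairwise independent. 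Applying Construction \ref{con-subtract} to $i$ of them removes $3i$ further edges and keeps the embedding triangular, which yields a $(4r+2,t+3i)$-triangulation.

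\textbf{Main obstacle.} The hard part is nonorientability for every $i$ up to $2r$: each subtraction may make the surface orientable. I would handle this by keeping a ``witness'' twisted cycle that is untouched by the chosen crosscaps. For example, I would take a cycle through unidirectional edges of the current graph that are not rungs of the broken ladders, and whose corresponding edges are not among those whose signatures are switched in Construction \ref{con-subtract}; that is, the edges $(p,\cdot)$ lie outside it. If no such witness exists in the original graph, the fallback is to subtract at most $2r-1$ crosscaps from the ladders and obtain the last one from a different configuration that preserves a twisted cycle. Alternatively, for $i=2r$, the Euler characteristic count may itself force nonorientability, since an orientable surface needs an even Euler genus; a parity check of $\binom{4r+2}{2}-t-6r$ against $3(4r+2)-6$ may settle the extreme cases.
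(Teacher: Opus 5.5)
Your skeleton matches the paper's: glue the small triangulation, minus its dominating vertex, into the vortex face, then subtract crosscaps coming from a broken 2-ladder. The main gap is nonorientability, which you flag as the obstacle but do not resolve. The witness twisted cycle is never exhibited. It is not clear that one exists avoiding every deleted edge and every re-signed edge $(p,c),\dots,(p,b)$ at the vertices $p$ you use. The parity fallback only helps when the Euler genus is odd, and each subtraction lowers the Euler genus by exactly one, so it settles only about half of the values of $i$.

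The missing idea is Ringel's shortcut: a triangular embedding that contains a subtractible crosscap is necessarily nonorientable. This is because Construction~\ref{con-subtract} is the reverse of inserting a crosscap (Figure~\ref{fig-crossadd}). The broken 2-ladder with rungs $\pm 2,\pm 4$ gives, by Lemma~\ref{lem-broken} with $k=2$ and $s=2r+1$, a set of $2r+1$ pairwise independent subtractible crosscaps, none involving the vortex. After subtracting any $i\le 2r$ of them, at least one untouched crosscap remains and certifies nonorientability. This is exactly why the statement stops at $i=2r$ although $2r+1$ subtractions are available. It also handles $i=0$ without checking any twisted cycle.

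Second, Step 1 misdescribes the current graphs, and as written it does not yield a $(4r+2,t)$-triangulation. The corrections are:
\begin{itemize}
\item The group is $\Z_{4r+2}$ (note $8q+2=4(2q)+2$ and $8q+6=4(2q+1)+2$), not $\Z_{2r+2}$.
\item All $4r+2$ vertices are group elements, and there are no lettered vertices.
\item There is a single vortex $a$, whose face is a $(2r+1)$-gon through the even elements in the order $0,2r,2(2r),\dots$.
\item Only circuit $[1]$ logs every nonzero element. Circuit $[0]$ contains the odd elements but no even element other than $\pm 2r$.
\end{itemize}
That omission is what makes the surgery work. The derived embedding is $K_{4r+2}$ minus all edges among the even vertices except those of that $(2r+1)$-cycle. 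Gluing in the $(2r+2,t)$-triangulation minus $a'$, with the link of $a'$ matched to the cycle, restores every such edge except the $t$ absent ones. Note that this piece is a surface with one boundary cycle, not a disk in general.

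In your version the counts do not close up. A $(2r+1)$-cycle boundary cannot be matched to Hamiltonian faces on $2r+2$ numbered vertices, and gluing several copies would leave more than $t$ edges missing. Likewise, you do not need several ladders: the single broken 2-ladder already supplies all $2r+1$ crosscaps.
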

\begin{proof}
Consider the two families of index 2 current graphs with current group $\mathbb{Z}_{4r+2}$ in Figure \ref{fig-induction}, defined for all $r \geq 1$, where $r = 2q$ or $r = 2q+1$. The log of circuit $[1]$ contains every nonzero element of $\mathbb{Z}_{4r+2}$ exactly once. On the other hand, the log of circuit $[0]$ contains each odd element exactly once and does not contain any even elements except $\pm 2r$. Furthermore, the vortex labeled $a$ generates a $(2r+1)$-sided face incident with all of the even elements. Relabel the vertices of the $(2r+2, t)$-triangulation so that the vertex adjacent to every other vertex is called $a'$ and its rotation is of the form
$$\begin{array}{rlllllllllllllllllllllllllllll}
a'. & \dotsc & 0 & 2r & 2(2r) & 3(2r) & \dotsc, \\
\end{array}$$
matching the ordering of the face induced by vortex $a$. Delete $a'$ and its incident edges and faces and the face induced by vortex $a$ and glue the two surfaces along the resulting boundaries, identifying the pairs of vertices with the same name. This yields a $(4r+2, t)$-triangulation. 

When $r \geq 2$, there is a broken 2-ladder with currents $\pm 2$ and $\pm 4$ on the rungs, so we may subtract up to $2r+1$ crosscaps using Lemma \ref{lem-broken}. Since none of these crosscaps involve the vortex $a$, this does not interfere with the aforementioned surgery operation. This process yields $(4r+2, t+3i)$-triangulations, for each $i = 0, 1, \dotsc, 2r+1$. To avoid having to explicitly check for nonorientability, we use a shortcut of Ringel \cite{Ringel-MinNon}: the presence of a subtractible crosscap immediately implies nonorientability, so the triangulations for $i = 0, \dotsc, 2r$ (but not necessarily $i = 2r+1$) are nonorientable.
\end{proof} 

\begin{corollary}
For $n \equiv 6, 10 \pmod{12}$, $n \geq 10$, there exist nonorientable $(n, t)$-triangulations for $t \leq n-6$, $t \equiv 0 \pmod{3}$. For $n \equiv 2 \pmod{12}$, $n \geq 26$, there exist nonorientable $(n, t)$-triangulations for $t \leq n-6$, $t \equiv 1 \pmod{3}$. 
\label{cor-2610}
\end{corollary}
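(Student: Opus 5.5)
Corollary \ref{cor-2610} follows by applying Lemma \ref{lem-induction} with $n = 4r+2$, i.e.\ $r = (n-2)/4$. Residues $n \equiv 2, 6, 10 \pmod{12}$ are all $\equiv 2 \pmod 4$, so $r$ is an integer with $2r+2 = (n+2)/2$. What remains is to choose a suitable seed $(2r+2, t_0)$-triangulation with a vertex adjacent to all others, and to check that $t_0 + 3i$ covers every required $t \le n-6$ in the right residue class mod 3. The lemma yields $t = t_0, t_0+3, \dotsc, t_0 + 6r$, and $6r = (3n-6)/2 \ge n-6$. So the upper range is never a problem, and the only work is choosing $t_0$ small with the correct residue.

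For the seeds I would use triangular embeddings of complete graphs or near-complete graphs on $m = (n+2)/2$ vertices. Any vertex of degree $m-1$ serves as the apex, and in $K_m - H$ any vertex outside $H$ does. The seed may be orientable or nonorientable, since the lemma only needs a triangulation.
\begin{itemize}
\item $n = 12s+6$: $m = 6s+4 \equiv 4 \pmod{12}$, and $K_m$ triangulates an orientable surface by Theorem \ref{thm-mct-orient} (e.g.\ $K_4$ in the plane for $n=6$, though the corollary starts at $n \geq 10$). This gives $t_0 = 0$, and then every $t \equiv 0 \pmod 3$ with $0 \le t \le n-6$ is covered.
\item $n = 12s+10$: $m = 6s+6 \equiv 0, 6 \pmod{12}$. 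For $m \equiv 0$, $K_m$ triangulates orientably (Theorem \ref{thm-mct-orient}). For $m \equiv 6$, $K_m$ triangulates a nonorientable surface (Theorem \ref{thm-mct-nonorient}, since $(m-3)(m-4)/6$ is then an integer, with $m=6$ giving $K_6$ in the projective plane). Either way $t_0 = 0$. For $n=10$, $m=6$ and $K_6$ in $N_1$ works.
\item $n = 12s+2$, $n \ge 26$: $m = 6s+2 \equiv 2, 8 \pmod{12}$, and we need $t_0 \equiv 1 \pmod 3$. Here $(m-3)(m-4)/6$ is not an integer, and the natural seed is a $(m,1)$-triangulation, i.e.\ a triangular embedding of $K_m - K_2$. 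For $m \equiv 2 \pmod{12}$ such embeddings exist orientably (cited in the discussion before Proposition \ref{prop-skrek}). For $m \equiv 8 \pmod{12}$, I would cite nonorientable $(m,1)$-triangulations from Ringel's minimum-triangulation work \cite{Ringel-MinNon}, or the standard nonorientable triangulations of $K_m - K_2$. Any vertex not incident with the missing edge is the apex. The bound $n \ge 26$ (so $m \ge 14$) presumably avoids small exceptions such as $m = 8$, where a $(8,1)$-triangulation fails to exist or be suitable.
\end{itemize}

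The main obstacle is the seed availability for $n \equiv 2 \pmod{12}$ with $m \equiv 8 \pmod{12}$. I would first try to extract a $(m,1)$-triangulation from a known triangular embedding of $K_m$ minus a small graph, or from the orientable current-graph constructions for $K_{12s+8}$ (e.g.\ the current graphs of Figure \ref{fig-orient8} before contraction). Failing that, I would cite Ringel's $(m,1)$ seeds for this residue. Everything else is arithmetic: $t_0 + 3i$ with $i \le 2r$ reaches beyond $n-6$, and the lemma supplies nonorientability for all these values.
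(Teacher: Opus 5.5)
Your plan is the paper's own proof. Write $n=4r+2$, feed a triangular embedding of $K_{2r+2}$ (giving $t_0=0$) or of $K_{2r+2}-K_2$ (giving $t_0=1$) into Lemma \ref{lem-induction}, and observe that $t_0+6r$ already exceeds $n-6$. Your handling of $n\equiv 10$ and $n\equiv 2 \pmod{12}$ is correct. For $m\equiv 8\pmod{12}$ the paper cites the nonorientable triangulations of $K_{12k+8}-K_2$ from Korzhik \cite{Korzhik-Case8} or Sun \cite{Sun-Index2}, which is the fallback you name.

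One step fails as written. For $n=12s+6$ you claim $m=6s+4\equiv 4 \pmod{12}$, but that holds only for even $s$. For odd $s$ you get $m\equiv 10\pmod{12}$, and this includes the very first case of the corollary, $n=18$ with $m=10$. There $(m-3)(m-4)/12$ is not an integer, so $K_m$ has no orientable triangular embedding and Theorem \ref{thm-mct-orient} gives you no seed.

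The repair is immediate. For $m=12k+10$ one has $(m-3)(m-4)/6=(12k+7)(2k+1)$, so Theorem \ref{thm-mct-nonorient} supplies a nonorientable triangular embedding of $K_m$, and the lemma does not care about the orientability of the seed. The paper avoids the split altogether. It notes that $(2r-1)(2r-2)/6=(2r-1)(r-1)/3$ is an integer exactly when $r\not\equiv 0 \pmod 3$, i.e., exactly when $n\equiv 6,10\pmod{12}$. In all those cases it uses the nonorientable triangulation of $K_{2r+2}$, and it uses $K_{2r+2}-K_2$ only when $r\equiv 0\pmod 3$.

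Separately, your first idea for $m\equiv 8\pmod{12}$ cannot work: extracting an $(m,1)$-triangulation from an orientable construction such as Figure \ref{fig-orient8} is impossible. An orientable $(m,1)$-triangulation would need $(m-2)(m-5)/12$ to be an integer, and it is not for such $m$. The seed there is necessarily nonorientable, which is why the citation you fall back on is the right one.
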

\begin{proof}
We apply Lemma \ref{lem-induction} with $t = 0$ or $1$ and different values of $i$. Write $n$ as $4r+2$ for some positive integer $r$. If $r \neq 0 \pmod{3}$, then by Theorem \ref{thm-mct-nonorient}, there exists a triangular embedding of $K_{2r+2}$. Otherwise, we need a triangular embedding of $K_{2r+2}-K_2$. If $2r+2 \equiv 2 \pmod{12}$, then we may use the orientable triangular embeddings of $K_{12s+2}-K_2$, $s \geq 1$, mentioned earlier in Proposition \ref{prop-skrek}. If $2r+2 \equiv 8 \pmod{12}$, then we use the nonorientable triangular embeddings of $K_{12s+8}-K_2$, $s \geq 1$, presented in Korzhik \cite{Korzhik-Case8} or Sun \cite{Sun-Index2}. 
\end{proof}

For some of the other residues, we gather known families of current graphs with broken 2-ladders:

\begin{itemize}
\item For $n \equiv 0, 4 \pmod{12}$, $n \geq 16$, the families ``$\Gamma(12s+12)$'' and ``$\Gamma(12s+4)$,'' respectively, in Figure 9 of Korzhik \cite{Korzhik-AnotherProof}. 
\item For $n \equiv 5, 11 \pmod{12}$, $n \geq 17$, the families in Figures 8.18--21 of Ringel \cite{Ringel-MapColor}.
\item For $n \equiv 3, 9 \pmod{12}$, $n \geq 15$, the index 3 family in Figure 9.16 of Ringel \cite{Ringel-MapColor} (see the Remark on page 158 for $n \equiv 3 \pmod{12}$), formed by combining three copies of the aforementioned current graphs for $n \equiv 5, 11 \pmod{12}$. 
\end{itemize}

In each of these families, there are at most two vortices of type (V1), meaning that the order of the current group is always at least $n - 2$. The index of each current graph is at most $3$, so by Lemma \ref{lem-broken}, we are able to subtract at least $n - 2$ edges. By Ringel's shortcut for determining nonorientability, we obtain:

\begin{lemma}
For $n \equiv 0, 3, 4, 9 \pmod{12}$, $n \geq 15$, there exist nonorientable $(n, t)$-triangulations for all $t \leq n-6$, $t \equiv 0 \pmod{3}$. For $n \equiv 5, 11 \pmod{12}$, $n \geq 11$, there exist nonorientable $(n, t)$-triangulations for all $t \leq n-6$, $t \equiv 1 \pmod{3}$. 
\label{lem-prevknown}
\end{lemma}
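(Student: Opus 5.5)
The plan is to apply Lemma \ref{lem-broken} to each of the cited current graph families and count how many independent subtractible crosscaps we get. Then we combine the starting triangulation with repeated applications of Construction \ref{con-subtract}.

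\textbf{Starting triangulation.} Each cited family (Korzhik's $\Gamma(12s+12)$ and $\Gamma(12s+4)$, Ringel's Figures 8.18--21, and the index 3 family of Figure 9.16) derives a triangular embedding of $K_n$ or of a near-complete graph. For $n \equiv 0,3,4,9 \pmod{12}$, the derived embedding is a triangular embedding of $K_n$, so it is an $(n,0)$-triangulation. For $n \equiv 5, 11 \pmod{12}$, the families of Ringel give triangular embeddings of $K_n - K_2$, so we start from an $(n,1)$-triangulation; this matches the residue condition $t \equiv 1 \pmod 3$ of Lemma \ref{lem-nt} for $n \equiv 2 \pmod 3$.

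\textbf{Counting crosscaps.} In each family I would check two things. First, that the broken 2-ladder appears with its rungs traversed by a single circuit, in the configuration of Figure \ref{fig-broken}. Second, that the rung currents $\pm\gamma, \pm 2\gamma$ are divisible by the index $k$. For index 1 the divisibility is automatic. For indices 2 and 3, I would inspect the specific current assignments; the proof of Lemma \ref{lem-broken} shows that property (C6) forces it whenever the same circuit traverses a rung twice. Lemma \ref{lem-broken} then gives $m/k \cdot k$ crosscaps. Here I would check whether, for index $k>1$, all $k$ residue-translates of the ladder contribute. The cleaner route is that each vertex of the ladder generates $m/k$ faces per circuit class. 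That route is the delicate point, but the paper has already asserted that at least $m \ge n-2$ subtractions are available. Each crosscap subtraction deletes exactly $3$ edges, removing the path $[a,c,b,d]$, and it preserves triangularity. Since the crosscaps are pairwise independent, we may subtract any number $i$ of them, from $0$ up to the available count. This yields $(n, t_0+3i)$-triangulations. To cover all $t \le n-6$, we need $3i \le n-6$, i.e.\ $i \le (n-6)/3$, which is far fewer than $n-2$ crosscaps.

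\textbf{Nonorientability and small cases.} By Ringel's shortcut, any triangulation that still contains a subtractible crosscap is nonorientable, and Construction \ref{con-subtract} can only be applied to a nonorientable embedding. Since we use at most $(n-6)/3 < n-2$ of the independent crosscaps, at least one remains, so every triangulation produced is nonorientable. This includes the case $i=0$ when the starting embedding was orientable. Finally, the ranges must be matched. The cited families cover $n \ge 15$ or $n \ge 16$ for the first statement and $n \ge 17$ for the second. For the second statement, $n=11$ must be handled separately: either Ringel's index 1 family includes $n=11$ with a broken 2-ladder, or we exhibit a nonorientable $(11,1)$- and $(11,4)$-triangulation directly. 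I expect the main obstacle to be verifying, in each borrowed figure, that a genuine broken 2-ladder with unidirectional rungs and index-divisible currents is present.
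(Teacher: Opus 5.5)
Your overall route is the paper's: cite the families, apply Lemma \ref{lem-broken}, subtract crosscaps with Construction \ref{con-subtract}, and certify nonorientability with Ringel's shortcut. There are two problems: a miscount that happens to be harmless, and the case $n=11$, which you leave unproved.

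\textbf{The crosscap count.} Lemma \ref{lem-broken} gives $s=m/k$ independent crosscaps for the group $\Z_{ks}$, not $m/k\cdot k=m$. The crosscap $(0,\gamma,\gamma-\beta,2\gamma,\alpha+\gamma)$ reads the rotations at both $0$ and $\gamma$ off the log of circuit $[0]$. So the additivity rule only lets you translate it by multiples of $k$. A translate by $j\not\equiv 0 \pmod k$ would have to be read off the log of circuit $[j \bmod k]$, which never passes through the rungs, so nothing guarantees the pattern there. Your own remark confirms the bound: each ladder vertex generates only $m/k$ faces, so the four ladder vertices supply $4m/k$ faces and at most $m/k$ pairwise independent crosscaps of this form. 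What $m\ge n-2$ and $k\le 3$ actually give is $3s\ge n-2$ deletable edges, not $n-2$ crosscaps. The argument still works with the correct count, but only just. You use $i\le (n-6-t_0)/3$ crosscaps, and $i+1\le (n-3)/3<(n-2)/3\le s$, so one crosscap is left over for Ringel's shortcut. It is not ``far fewer.'' Also, the starting embedding already contains a subtractible crosscap, so it is automatically nonorientable; there is no orientable $i=0$ case to worry about.

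\textbf{The case $n=11$.} Your first option is false. The $n=11$ member of the family has no broken 2-ladder, and additivity translates of an ad hoc crosscap can collide: for $(i,3+i,5+i,6+i,2+i)$, the choices $i=0$ and $i=3$ share faces. You need explicit crosscaps. This is what the paper does in Proposition \ref{prop-smallmin}, which is also where $n=11$ is actually covered; the argument just before the lemma only treats $n\ge 17$ for these residues. The paper uses the two crosscaps $(b,p,d,c,a)=(0,3,5,6,2)$ and $(1,4,6,7,3)$. Subtracting the first turns the $(11,1)$-triangulation into an $(11,4)$-triangulation, and the second survives to certify nonorientability. That certification is needed. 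The $(11,1)$-triangulation has odd Euler genus $9$ and is automatically nonorientable, but the $(11,4)$-triangulation has Euler genus $8$, and an orientable one exists (Lemma \ref{lem-orient11}).
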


It remains to solve the cases $n \equiv 1, 7, 8 \pmod{12}$ and some small sporadic cases. 

\begin{proposition}
For $n = 9, 12$, there exist nonorientable $(n, t)$-triangulations, for $t \equiv 0 \pmod{3}$ and $t \leq n-6$. For $n = 11, 14$, there exist nonorientable $(n, t)$-triangulations, for $t \equiv 1 \pmod{3}$ and $t \leq n-6$.
\label{prop-smallmin}
\end{proposition}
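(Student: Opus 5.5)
The proposition concerns only four small orders, and for each one few triangulations are needed. For $n=9$ and $n=12$ we need $t\in\{0,3\}$ and $t\in\{0,3,6\}$, respectively. For $n=11$ and $n=14$ we need $t\in\{1,4\}$ and $t\in\{1,4,7\}$, respectively. My plan is to produce each case either from a known triangular embedding followed by crosscap subtraction (Construction~\ref{con-subtract}), or from an explicit rotation system verified by hand or computer.

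\textbf{The $t=0$ and $t=1$ cases.} Theorem~\ref{thm-mct-nonorient} gives nonorientable triangular embeddings of $K_9$ and $K_{12}$, since $(n-3)(n-4)/6$ is an integer for $n=9,12$. For $n=11$ and $n=14$, the quantity $(n-3)(n-4)/6$ is not an integer, but a nonorientable triangular embedding of $K_n-K_2$ attains the Euler bound. For $n=11$ I would take the $n\equiv 11$ current-graph family mentioned before Lemma~\ref{lem-prevknown}, checking whether its smallest member already covers $n=11$. For $n=14$ I would cite the nonorientable $K_{12s+2}-K_2$ embeddings used in Corollary~\ref{cor-2610}. If that source only treats orientable ones, I would instead write down an index~1 or index~2 current graph with group $\mathbb{Z}_{12}$ or $\mathbb{Z}_{13}$ and the appropriate vortices.

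\textbf{The larger values of $t$.} From each such embedding I would locate subtractible crosscaps $(b,p,d,c,a)$, that is, a vertex $p$ with rotation $\dots a\,c\dots b\,d\dots$ while $b$ has $\dots p\,d\,c\,a\dots$. I would then apply Construction~\ref{con-subtract} once, or twice when $t=6,7$ is needed. For the second subtraction the crosscaps must be independent, or else I must find a new subtractible configuration in the modified embedding. Ringel's shortcut guarantees nonorientability only while another subtractible crosscap remains. So for the last step ($t=n-6$) I would either exhibit an additional crosscap, or check directly that some cycle carries an odd number of twisted edges.

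\textbf{Main obstacle.} I expect the main difficulty to be $n=9$, $t=3$, where there is no room at all. A $(9,3)$-triangulation lives in $N_3$, which has Euler genus $3=\tilde\gamma(K_9)-2$. Huneke's obstruction concerns only $S_2$, and $N_3$ has a 9-vertex triangulation, so existence is not in doubt. Still, a broken 2-ladder need not exist in any small current graph. I would therefore search directly: start from an explicit $K_9$ triangulation of $N_5$ (e.g.\ Ringel's) and test all vertex pairs for the pattern of Construction~\ref{con-subtract}. If that fails, I would write down an explicit $(9,3)$ rotation system with signatures, in the style of Proposition~\ref{prop-k9}, and verify its faces. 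I would handle $n=12$, $t=6$ and $n=14$, $t=7$ the same way if two independent crosscaps cannot be found in the base embeddings.
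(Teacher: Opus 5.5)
\textbf{There is a genuine gap: the proposal is a search plan rather than a proof.} Your overall route is the paper's. You start from current-graph triangular embeddings of $K_9$, $K_{11}-K_2$, $K_{12}$ and $K_{14}-K_2$, subtract crosscaps via Construction~\ref{con-subtract}, and keep one spare crosscap to certify nonorientability. That last point is correct and is exactly what the paper does.

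But the proposal stops where the proof has to begin. For a purely existential statement about four small orders, the proof \emph{is} the explicit data. You never fix a nonorientable base embedding for $n=11,14$, and you never exhibit a single subtractible crosscap for any $n$. The fallback of writing down an explicit rotation system just restates the claim.

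No general lemma fills this in either. The small current graphs have no broken 2-ladder, so Lemma~\ref{lem-broken} does not apply. Naive translates of a crosscap can also share faces: for $n=12$ the translates $i=0,1$ of $(i,10+i,x,1+i,4+i)$ share $[0,1,x]$, and for $n=11$ the translates $i=0,3$ conflict. So specific crosscaps must be chosen and checked to remain subtractible after the earlier ones. The paper supplies exactly this, using the current graphs of Figure~\ref{fig-smallcurrent}. It subtracts $(b,p,d,c,a)=(0,3,5,6,2),(1,4,6,7,3)$ for $n=9,11$. For $n=12$ it uses $(0,10,x,1,4),(2,1,x,3,6),(4,3,x,5,8)$. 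For $n=14$ it uses $(0,10,8,9,y),(2,0,10,11,y),(4,2,0,1,y)$. Each list has one more crosscap than needed.

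\textbf{Two specific points in your plan are wrong.} First, your \emph{main obstacle} rests on a miscount. A $(9,3)$-triangulation has $33$ edges, so $|E|=3|V|-6+3g$ gives Euler genus $4$. It lives in $N_4$, the same Euler genus as Huneke's $S_2$, not in $N_3$. The $9$-vertex triangulation of $N_3$ is a $(9,6)$-triangulation and does not imply a $(9,3)$ one, so your claim that existence is not in doubt is unjustified. Appealing to Theorem~\ref{thm-mintri} for $N_4$ would be circular, since this section re-proves it. In fact $n=9$ is the easy case: one crosscap in the current-graph $K_9$ gives $(9,3)$, and a second gives the $(9,6)$ minimum triangulation of $N_3$.

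Second, for $n=14$ the $K_{12s+2}-K_2$ embeddings used in Corollary~\ref{cor-2610} are the orientable ones. An orientable triangulation contains no subtractible crosscap, and that corollary's nonorientable outputs start at $n=26$. Lemma~\ref{lem-induction} cannot produce a $(14,1)$-triangulation either, because no $(8,1)$-triangulation exists. So a new nonorientable $K_{14}-K_2$ embedding is genuinely required. The paper provides it with the index~2 current graph in Figure~\ref{fig-smallcurrent}(d); your proposal only promises one.
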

\begin{proof}
For $n = 9, 11, 12$, we consider the current graphs belonging to the aforementioned families, shown in Figure \ref{fig-smallcurrent}(a--c). We also utilize the index 2 current graph in Figure \ref{fig-smallcurrent}(d), which generates a nonorientable triangular embedding of $K_{14}-K_2$. None of these current graphs have broken 2-ladders, but we are still able to successively subtract the following crosscaps:
 
\begin{itemize}
\item For $n = 9, 11$, $(b, p, d, c, a) = (0, 3, 5, 6, 2)$ and $(1, 4, 6, 7, 3)$.
\item For $n = 12$, $(b, p, d, c, a) = (0, 10, x, 1, 4)$, $(2, 1, x, 3, 6)$, and $(4, 3, x, 5, 8)$. 
\item For $n = 14$, $(b, p, d, c, a) = (0, 10, 8, 9, y)$, $(2, 0, 10, 11, y)$, and $(4, 2, 0, 1, y)$.
\end{itemize} 
\end{proof}

\begin{remark}
The $(9,6)$-triangulation is a minimum triangulation for the surface $N_3$, one of the exceptions in Theorem \ref{thm-mintri}. It is nonorientable because the Euler genus is odd.
\end{remark}

We caution that attempts to use the additivity rule to generate multiple subtractible crosscaps for these current graphs, as done in Lemma \ref{lem-broken}, can result in crosscaps that share faces. For example, in the $n = 12$ case, we used subtractible crosscaps of the form $(b, p, d, c, a) = (i, 10+i, x, 1+i, 4+i)$. The independent crosscaps we obtained were from setting $i = 0, 2, 4$. However, the two crosscaps where $i = 0,1$ overlap at the face $[0, 1, x]$. This possibility arises because vortex $x$ does not generate triangular faces, but a Hamiltonian face that we subdivided later. For $n = 11$, the crosscaps of the form $(b, p, d, c, a) = (i, 3+i, 5+i, 6+i, 2+i)$ conflict when $i = 0$ and $3$. In this case, the four triangular faces in each subtractible crosscap are generated from just two vertices in the current graph. 

\begin{figure}[htbp]
\centering
    \begin{subfigure}[b]{0.55\textwidth}
        \centering
        \includegraphics[scale=1]{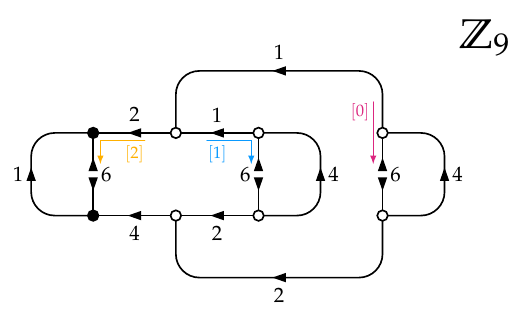}
        \caption{}
    \end{subfigure}
    \begin{subfigure}[b]{0.28\textwidth}
        \centering
        \includegraphics[scale=1]{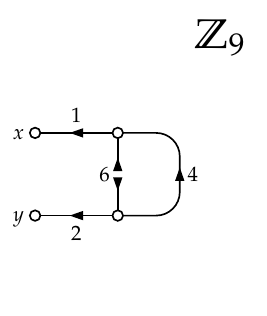}
        \caption{}
    \end{subfigure}
    \begin{subfigure}[b]{0.34\textwidth}
        \centering
        \includegraphics[scale=1]{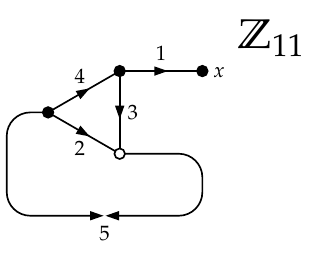}
        \caption{}
    \end{subfigure}
    \begin{subfigure}[b]{0.59\textwidth}
        \centering
        \includegraphics[scale=1]{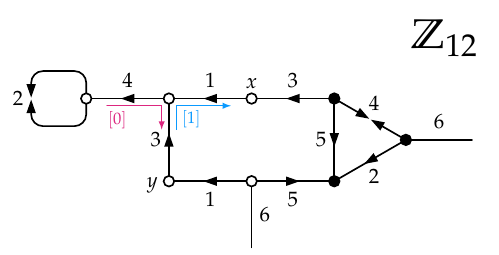}
        \caption{}
    \end{subfigure}
\caption{Small current graphs with subtractible crosscaps.}
\label{fig-smallcurrent}
\end{figure}

\begin{lemma}
For $n \equiv 1, 7 \pmod{12}$, $n \geq 13$, there exist nonorientable $(n, t)$-triangulations for all $t \leq n-6$, $t \equiv 0 \pmod{3}$.
\label{lem-min17}
\end{lemma}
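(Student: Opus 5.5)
The plan is to follow the same template as Lemma \ref{lem-prevknown}. For each residue I will produce a family of current graphs, of index at most $3$, that generates a triangular embedding of $K_n$ or of a near-complete graph with $t_0 \equiv 0 \pmod 3$ missing edges. Each family will contain a broken 2-ladder whose rung currents are divisible by the index. Then Lemma \ref{lem-broken} supplies enough independent subtractible crosscaps, and Construction \ref{con-subtract} lets me remove any number of them. Ringel's shortcut gives nonorientability: every triangulation that still has a subtractible crosscap left is nonorientable.

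For $n = 12s+7$ I would use an index $1$ current graph with group $\mathbb{Z}_{12s+7}$ and no vortices. Here $(n-3)(n-4)/6$ is an integer, so such a graph gives a triangular embedding of $K_n$ outright. If $(n-3)(n-4)/6$ were not an integer, I would instead use group $\mathbb{Z}_{12s+6}$ with one vortex of type (V1). In either case the ladder part of the graph should be arranged to contain a broken 2-ladder, for example with rung currents $\pm\gamma$ and $\pm 2\gamma$, so that Lemma \ref{lem-broken} gives at least $n-1$ independent crosscaps. Subtracting $i$ of them yields an $(n, 3i)$-triangulation. Since $3i \le n-6$ requires only $i \le (n-6)/3 < n-1$, there is always a crosscap left over, so every triangulation obtained is nonorientable.

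For $n = 12s+1$, which is the residue where Ringel's argument has a gap, I would use a new family. A natural choice is index $3$ with group $\mathbb{Z}_{12s}$ and one vortex of type (V1), giving $K_{12s+1}$. The index-$3$ globular ladder of Figure \ref{fig-sampleladder}(b) should be modified so that one circuit traverses a broken 2-ladder whose rung currents are multiples of $3$. Lemma \ref{lem-broken} would then give $4s$ independent crosscaps. We need $i \le (12s-5)/3$, i.e.\ $i \le 4s-2$, so we have room to spare, and at least one crosscap always remains, which gives nonorientability. If an index $3$ design proves awkward, a fallback is index $2$ with group $\mathbb{Z}_{12s}$ and one (V1) vortex.

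Small cases not covered by a family, such as $n = 13$, $19$ or $25$, would be given by explicit current graphs with hand-chosen crosscaps, as in Proposition \ref{prop-smallmin}. There I must check independence directly, because the additivity rule can produce crosscaps that overlap. The main obstacle is designing the current graphs themselves. They must satisfy (C1)--(C6), in particular (C2) and the (C6) consistency condition, while forcing both rungs of the broken 2-ladder onto the same circuit with unidirectional behavior. This is a global condition that depends on the whole rotation. My first attempt would be to graft a broken 2-ladder fragment onto the end of a known Youngs/Korzhik-style ladder for these residues, then adjust the arithmetic-progression currents and vertex colors until the circuit bookkeeping closes up, checking small $s$ by hand.
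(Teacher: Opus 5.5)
\textbf{The gap.} Your outer framework is the paper's: a current graph containing a broken 2-ladder, Lemma~\ref{lem-broken} to get independent subtractible crosscaps, successive applications of Construction~\ref{con-subtract}, and Ringel's shortcut for nonorientability. Your count of how many crosscaps are needed is also correct.

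The problem is that the lemma is a pure existence statement, and its whole content is the current graphs. You never produce any. You say which parameters you ``would use'' and then name designing the graphs as the main obstacle. Lemma~\ref{lem-broken} only turns a broken 2-ladder into crosscaps; it does not create one. The requirements are also global: both rungs must lie on one circuit and be unidirectional, while (C2) and (C6) hold everywhere. So ``graft a fragment onto a known ladder and adjust'' is a search strategy, not an argument.

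Nor can you shortcut this by citing the known nonorientable triangulations of $K_n$ behind Theorem~\ref{thm-mct-nonorient}. It is not clear that those embeddings admit repeated crosscap subtraction. Moreover, for $n\equiv 1 \pmod{12}$ Ringel's inductive route breaks down when $n=12s+1$ with $s$ a power of $3$, which is exactly why a new family is needed for that residue.

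\textbf{What the paper does.} The paper supplies explicit index-$1$ families. For $n=12s+1$ it uses group $\mathbb{Z}_{12s+1}$ with no vortices (Figure~\ref{fig-non1}). For $n=12s+7$ it uses group $\mathbb{Z}_{12s+6}$ with one (V1) vortex (Figure~\ref{fig-non7}). Every one of these graphs has a broken 2-ladder except the one for $n=13$. There the crosscaps $(b,p,d,c,a)=(i,i+12,i+11,i+4,i+7)$, $i=0,1,2$, are checked by hand to be independent. Three is exactly enough, since $t\in\{0,3,6\}$ needs at most two subtractions and leaves one crosscap for nonorientability.

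Your parameter choices differ: index $3$ over $\mathbb{Z}_{12s}$ with one vortex for $n\equiv 1$, and vortex-free $\mathbb{Z}_{12s+7}$ for $n\equiv 7$. They carry their own unverified constraints. For example, in index $3$ over $\mathbb{Z}_{12s}$ the involution $6s$ is divisible by $3$. By (C6), any non-pendant edge carrying it would have both arcs on the same circuit and would put $6s$ twice into that log. This forces an order-$2$ pendant vertex on each of the three circuits. Until some family is actually exhibited and verified, the proposal is a plan rather than a proof.
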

\begin{proof}
Consider the families of current graphs in Figures \ref{fig-non1} ($n \equiv 1$) and \ref{fig-non7} ($n \equiv 7$). Each current graph has a broken 2-ladder except for the graph in Figure \ref{fig-non1}(a). For this case, we use the \emph{ad hoc} subtractible crosscaps $(b, p, d, c, a) = (i, i+12, i+11, i+4, i+7)$. The crosscaps for $i = 0, 1, 2$ are independent.
\end{proof}

\begin{figure}[htbp]
\centering
    \begin{subfigure}[b]{0.99\textwidth}
        \centering
        \includegraphics[scale=1]{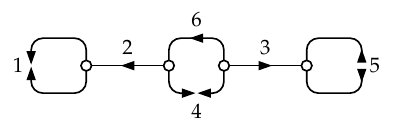}
        \caption{}
    \end{subfigure}
    \begin{subfigure}[b]{0.99\textwidth}
        \centering
        \includegraphics[scale=1]{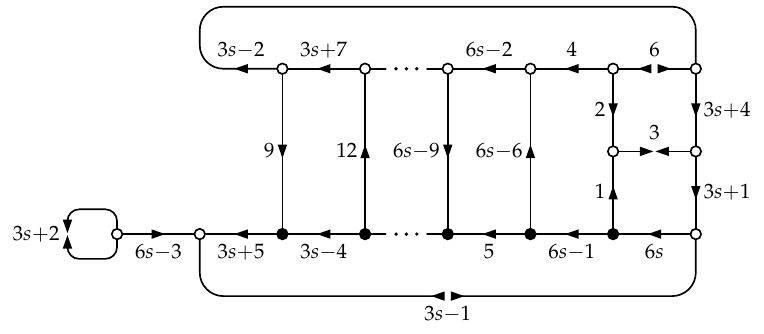}
        \caption{}
    \end{subfigure}
\caption{Index 1 current graphs with group $\Z_{12s+1}$ for $s = 1$ (a) and $s \geq 2$ (b).}
\label{fig-non1}
\end{figure}

\begin{figure}[htbp]
\centering
    \begin{subfigure}[b]{0.99\textwidth}
        \centering
        \includegraphics[scale=1]{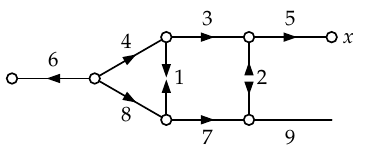}
        \caption{}
    \end{subfigure}
    \begin{subfigure}[b]{0.99\textwidth}
        \centering
        \includegraphics[scale=1]{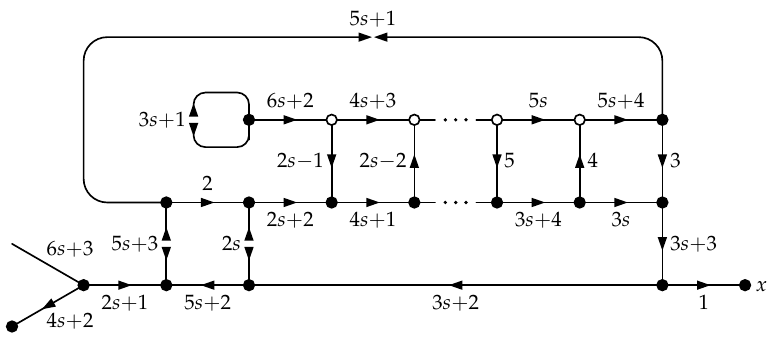}
        \caption{}
    \end{subfigure}    
\caption{Index 1 current graphs with group $\Z_{12s+6}$ for $s = 1$ (a) and $s \geq 2$ (b).}
\label{fig-non7}
\end{figure}

\begin{lemma}
For $n \equiv 8 \pmod{12}$, $n \geq 20$, there exist nonorientable $(n, t)$-triangulations for all $t \leq n-6$, $t \equiv 1 \pmod{3}$.
\label{lem-min8}
\end{lemma}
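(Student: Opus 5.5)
The plan is to mirror the treatment of $n \equiv 8 \pmod{12}$ in the orientable case (Lemma \ref{lem-orient8}), but with a nonorientable current graph that also contains a broken 2-ladder. Since $n = 12s+8$ and we need $t \equiv 1 \pmod 3$, the natural starting point is a nonorientable triangular embedding of $K_{12s+8}-K_2$, so that $t=1$ is already achieved. Such embeddings are available (Korzhik \cite{Korzhik-Case8}, Sun \cite{Sun-Index2}), but I do not expect them to contain broken 2-ladders. I would therefore design a new family instead. This would be an index 1 or index 3 current graph with current group $\Z_{12s+6}$, one vortex $x$ of type (V1) and one vortex $y$ of type (V2), as in Figure \ref{fig-orient8}, now with some unidirectional edges.

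After amalgamating the two vertices $y_0,y_1$ in the style of Construction \ref{con-con8}, the only missing edge should be $(x,y)$. Here I would prefer a variant where no path is deleted: the contraction is done across a single face, possibly using one unidirectional edge, giving a $(12s+8,1)$-triangulation. If that is awkward, I would instead accept $t=4$ from Construction \ref{con-con8} and obtain $t=1$ separately from the known $K_{12s+8}-K_2$ embeddings.

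Next, I would build into the ladder a broken 2-ladder whose rung currents $\pm\gamma$, $\pm2\gamma$ are divisible by the index $k$. I would choose $\gamma=k$ (or $\gamma = 3$ for index 3) so that this condition holds. Lemma \ref{lem-broken} then provides $(12s+6)/k$ independent subtractible crosscaps. I must check that none of the four faces of each crosscap are incident with the vortex faces that are later modified, so that crosscap subtraction commutes with the $y$-amalgamation; as in Lemma \ref{lem-induction}, this reduces to the rungs and the adjacent vertices being away from the vortices.

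Each subtraction removes $3$ edges. To reach every $t \equiv 1 \pmod 3$ with $t \le n-6 = 12s+2$, we need $(12s+2-1)/3 = 4s+\tfrac13$, that is, about $4s$ subtractions. With index $1$ there are $12s+6$ crosscaps, which is plenty. With index $3$ there are $4s+2$, which is still enough. Nonorientability of all intermediate triangulations except possibly the last follows from Ringel's shortcut, since a remaining subtractible crosscap forces nonorientability. The final one can be handled by not using all the crosscaps, because we only need $\lfloor (12s+1)/3\rfloor = 4s$ of them. The base case $s=1$ ($n=20$) may require a separate small current graph with ad hoc crosscaps, as in Lemma \ref{lem-min17}.

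The main obstacle is constructing the current graph family itself. It must simultaneously satisfy (C1)--(C6) with the two vortices and produce a broken 2-ladder whose rungs are traversed by a single circuit unidirectionally. As the paper notes, this is a global condition on the circuit structure. I would try to adapt the index 3 ladder of Figure \ref{fig-orient8}, or the index 1 ladders of Figure \ref{fig-non7} for $\Z_{12s+6}$: keep the Figure \ref{fig-non7} ladder, which already has a broken 2-ladder, and replace its vortex configuration with one (V1) and one (V2) vortex by adjusting the currents at the end of the ladder.
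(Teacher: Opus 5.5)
Taken with your fallback, your outline is the architecture of the paper's proof. There, $t=1$ comes from the known nonorientable triangular embeddings of $K_{12s+8}-K_2$. Every $t\ge 4$ comes from a new family of index~1 current graphs with group $\Z_{12s+6}$, one (V1) vortex $x$ and one (V2) vortex $y$ (Figure~\ref{fig-non8}). Construction~\ref{con-con8} is applied with $a=6s+4$, followed by crosscap subtraction and Ringel's nonorientability shortcut.

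The genuine gap is that you never supply that family, and it is the whole substance of the lemma; the rest is bookkeeping. You need explicit currents satisfying (C1)--(C6) such that:
\begin{itemize}
\item the log of circuit $[0]$ contains $y, a, 6s{+}3$ consecutively;
\item a single circuit traverses the unidirectional rungs of a broken 2-ladder.
\end{itemize}
The second condition is global, so ``adjusting the currents at the end of the Figure~\ref{fig-non7} ladder'' is a hope rather than a step. The paper also needs no separate small current graph for $s=1$. The same family works there, but it has no broken 2-ladder, so the ad hoc crosscaps $(b,p,d,c,a)=(i,i+9,i+1,i+13,i+15)$ are used instead.

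Two auxiliary claims also need correcting.

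First, your preferred variant, giving $t=1$ with no path deleted, cannot work. Since $y_0$ and $y_1$ have disjoint neighbourhoods, no face of the triangular derived embedding contains both. Moreover, identifying them while keeping every edge and every face triangular lowers $|V|$ by one with $|E|$ unchanged. That raises the Euler genus from $24s^2+18s+2$ (the derived surface) to $24s^2+18s+3$. Adding an edge inside a face and contracting it preserves the Euler characteristic, so no such contraction can achieve this. The fallback is therefore forced, exactly as in the paper.

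Second, compatibility with Construction~\ref{con-con8} is not guaranteed by keeping the ladder away from the vortices. That construction also deletes the numbered edges $(a,0)$, $(0,6s{+}3)$, $(6s{+}3,6s{+}3{+}a)$. It thereby destroys the non-vortex triangles $[0,a,6s{+}3]$ and $[6s{+}3,6s{+}3{+}a,0]$. This is exactly what happens in the paper's case $s=1$: the crosscaps with $i=0,9$ are lost because $(0,9)$ is deleted.

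The correct argument is a count. By pairwise independence, only crosscaps containing one of the four faces modified by Construction~\ref{con-con8} are lost. With index~1 there are $12s+6$ crosscaps, far more than the $4s$ you need: $4s-1$ subtractions take $t=4$ up to $t=12s+1$, plus one spare for the nonorientability shortcut. Index~3 would give only $4s+2$ crosscaps, leaving no slack if two are lost.
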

\begin{proof}
We follow Ringel's approach \cite{Ringel-Non} of solving the $t = 1$ and the $t \geq 4$ cases separately. For $t = 1$, these are the triangular embeddings of $K_{12s+8}-K_2$ mentioned in Corollary \ref{cor-2610}. For $t \geq 4$, consider the family of current graphs in Figure \ref{fig-non8}. We apply Construction \ref{con-con8} with $a = 6s+4$. For $s \geq 2$, there is a broken 2-ladder. For $s = 1$, we use the subtractible crosscaps $(b, p, d, c, a) = (i, i+9, i+1, i+13, i+15)$. These crosscaps are induced by four distinct vertices, so they are all independent. However, Construction \ref{con-con8} destroyed the crosscaps corresponding to $i = 0$ and $9$, since it deleted the edge $(0, 9)$. We instead subtract the four crosscaps corresponding to $i = 1, \dotsc, 4$. 
\end{proof}

\begin{figure}[htbp]
\centering
\includegraphics[scale=1]{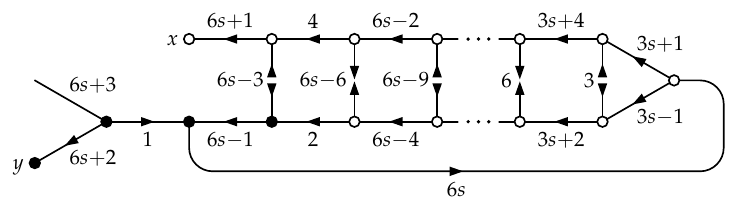}
\caption{Index 1 current graphs with group $\Z_{12s+6}$ for $s \geq 1$.}
\label{fig-non8}
\end{figure}

Corollary \ref{cor-2610}, Lemmas \ref{lem-prevknown}, \ref{lem-min17}, and \ref{lem-min8}, and Proposition \ref{prop-smallmin} cover all the requisite cases:

\begin{theorem}
There exists a nonorientable $(n,t)$-triangulation for all $n \geq 9$ and nonnegative $t \leq n-6$, where $t \equiv 1 \pmod{3}$ if $n \equiv 2 \pmod{3}$ and $t \equiv 0 \pmod{3}$, otherwise. 
\end{theorem}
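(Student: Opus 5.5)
This theorem is a case analysis over residues of $n$ modulo $12$, plus a few small orders. For each $n \geq 9$ I will point to the earlier result that supplies nonorientable $(n,t)$-triangulations for every admissible $t$. Here admissible means $0 \le t \le n-6$, with $t \equiv 1 \pmod 3$ when $n \equiv 2 \pmod 3$ and $t \equiv 0 \pmod 3$ otherwise. Since $n \equiv 2 \pmod 3$ corresponds exactly to $n \equiv 2, 5, 8, 11 \pmod{12}$, I need the $t \equiv 1$ family for these four residues and the $t \equiv 0$ family for the other eight.

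The large cases come directly from earlier results.
\begin{itemize}
\item Residues $6$ and $10$ for $n \ge 10$, and residue $2$ for $n \ge 26$, are handled by Corollary~\ref{cor-2610}.
\item Residues $0, 3, 4, 9$ for $n \ge 15$, and residues $5$ and $11$ for $n \ge 11$, are handled by Lemma~\ref{lem-prevknown}.
\item Residues $1$ and $7$ for $n \ge 13$ are handled by Lemma~\ref{lem-min17}.
\item Residue $8$ for $n \ge 20$ is handled by Lemma~\ref{lem-min8}.
\end{itemize}

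Next I list the orders $n \ge 9$ that escape these ranges and check each one.
\begin{itemize}
\item Residue $0$ leaves $n = 12$, and residue $9$ leaves $n = 9$. Both are covered by Proposition~\ref{prop-smallmin}. Residues $3$ and $4$ leave nothing, since $n = 3, 4$ are below $9$.
\item Residue $2$ leaves $n = 14$, which Proposition~\ref{prop-smallmin} covers.
\item Residue $8$ leaves $n = 8$, which is below $9$.
\item Residues $1$ and $7$ leave nothing, since $n = 1, 7$ are below $9$.
\item Residue $10$ needs nothing extra, since Corollary~\ref{cor-2610} already includes $n = 10$.
\item Residues $5$ and $11$ leave $n = 5$, below range, and $n = 11$. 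Lemma~\ref{lem-prevknown} states $n \ge 11$ for these residues, and Proposition~\ref{prop-smallmin} also covers $n = 11$ independently.
\end{itemize}

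The only step needing care is this bookkeeping of small cases. In particular, I should make sure each earlier result states its parity class of $t$ correctly for its residue; for example, Corollary~\ref{cor-2610} gives $t \equiv 1 \pmod 3$ for residue $2$. I should also confirm that $n = 14$ really lies outside the range of Corollary~\ref{cor-2610}. Once these checks are done, every pair $(n,t)$ is accounted for, and the theorem follows.
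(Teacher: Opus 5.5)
Your proposal is correct and matches the paper's proof: the theorem is assembled residue by residue from Corollary~\ref{cor-2610}, Lemmas~\ref{lem-prevknown}, \ref{lem-min17}, and~\ref{lem-min8}, and Proposition~\ref{prop-smallmin}, with the last supplying the leftover orders $n = 9, 11, 12, 14$. Covering $n = 11$ twice is sensible, because the families cited just before Lemma~\ref{lem-prevknown} for residues $5$ and $11$ only start at $n \geq 17$, so $n = 11$ really rests on Proposition~\ref{prop-smallmin}.
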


Hence, by Lemma \ref{lem-nt}, the nonorientable part of Theorem \ref{thm-mintri} is true for Euler genus at least 4. Returning to our original problem, whenever we constructed an $(n,3)$-triangulation, it was after applying crosscap subtraction once to a triangular embedding of $K_n$:

\begin{corollary}
For $n \not\equiv 2 \pmod{3}$, $n \geq 9$, there exists a nonorientable triangular embedding of $K_n-P_3$. 
\label{cor-non-p3}
\end{corollary}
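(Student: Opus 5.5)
The plan is to run through the cases for $n \not\equiv 2 \pmod 3$, $n \geq 9$, and in each one find a nonorientable triangular embedding of $K_n$ that contains at least one subtractible crosscap. Construction \ref{con-subtract} then deletes the three edges of the path $[a,c,b,d]$, which is a copy of $P_3$, and leaves a triangular embedding of $K_n-P_3$. This is exactly the $t=3$ case of the $(n,t)$-triangulations built above. We need $t = 3 \leq n-6$, which holds for $n \geq 9$. The only thing to check is that in each case the $(n,3)$-triangulation was obtained from a triangulation of the full $K_n$ (that is, $t=0$) by a single subtraction. In that situation the underlying graph is $K_n$ minus a $P_3$, not some other three-edge graph.

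For $n \equiv 0,3,4,9 \pmod{12}$ with $n \geq 15$, and for $n \equiv 1,7 \pmod{12}$ with $n \geq 13$, I would use the current graphs of Lemmas \ref{lem-prevknown} and \ref{lem-min17}. These generate triangular embeddings of $K_n$; their vortices are of type (V1), so no edges are missing. Each has a broken 2-ladder, or for Figure \ref{fig-non1}(a) the ad hoc crosscaps. Applying Lemma \ref{lem-broken}, or the ad hoc crosscap directly, once gives $K_n-P_3$. By Ringel's shortcut, the existence of a subtractible crosscap forces the starting embedding to be nonorientable. The resulting $(n,3)$-triangulation is also nonorientable, because a second independent crosscap is still present after the first subtraction ($s \geq 2$ crosscaps are available). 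For $n \equiv 6,10 \pmod{12}$, I would invoke Lemma \ref{lem-induction} with $t=0$ and $i=1$. Here $r \not\equiv 0 \pmod 3$, so the input is a triangular embedding of $K_{2r+2}$ and the glued embedding is of $K_{4r+2}$. One crosscap subtraction then yields $K_n-P_3$, and Lemma \ref{lem-induction} already guarantees nonorientability for $i \leq 2r$. The small cases $n=9,12$ are handled by Proposition \ref{prop-smallmin}, where the first listed crosscap is subtracted from the triangular embedding of $K_9$ or $K_{12}$.

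The main obstacle is $n \equiv 6,10 \pmod{12}$ when $r \equiv 0 \pmod 3$. There Corollary \ref{cor-2610} starts from $K_{2r+2}-K_2$, so the $t=0$ object is not $K_n$, and it is not a priori clear that some $(n,3)$-triangulation arises from a single subtraction on $K_n$. My first attempt would be to arrange the surgery of Lemma \ref{lem-induction} so that the missing edge of $K_{2r+2}-K_2$ is incident to $a'$. That does not match the hypothesis, because $a'$ must be adjacent to every other vertex. So instead I would check the arithmetic: $n = 4r+2$ with $r \equiv 0 \pmod 3$ gives $n \equiv 2 \pmod{3}$, which is excluded by the hypothesis. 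Hence in the allowed residues $6,10$ modulo 12, $r \not\equiv 0 \pmod 3$ always holds, and the complete-graph input is available. Once this is confirmed, the remaining work is verifying that the three deleted edges form a path, which is immediate from Construction \ref{con-subtract}.
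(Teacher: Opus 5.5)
Your proposal is correct and follows the paper's reasoning. The corollary is exactly the observation that, for $n \not\equiv 2 \pmod 3$, each $(n,3)$-triangulation built in Section~\ref{sec-min} (Lemmas~\ref{lem-prevknown} and \ref{lem-min17}, Corollary~\ref{cor-2610}, Proposition~\ref{prop-smallmin}) comes from a single crosscap subtraction on a triangular embedding of $K_n$; your case check makes this explicit, including the remark that $r \equiv 0 \pmod 3$ only occurs when $n \equiv 2 \pmod{12}$.

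The one soft spot is your stated reason that the starting graph is $K_n$, since two (V1) vortices would leave their lettered vertices nonadjacent. The conclusion still holds: $\binom{n}{2} \equiv 0 \pmod 3$ for these $n$, so every $(n,t)$-triangulation has $t \equiv 0 \pmod 3$, and the $t=0$ embeddings of Lemma~\ref{lem-prevknown} are necessarily of $K_n$.
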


\section{The remaining nonorientable cases for $K_n-C_5$} \label{sec-nonorient}

For $n \equiv 2 \pmod{3}$, the $(n, 4)$-triangulations constructed in the previous section are of the graph $K_n - (K_2 \cup P_3)$. Like in the orientable case, additional edge flips are needed to ensure that the missing edges form a single path.

\begin{lemma}
For $n = 12s + 11$, $s \geq 0$, there exists a nonorientable triangular embedding of $K_n-P_4$. 
\label{lem-skrek11}
\end{lemma}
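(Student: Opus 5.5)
The plan is to start from a nonorientable triangular embedding of $K_n-K_2$ and subtract one crosscap with Construction \ref{con-subtract}. That produces an $(n,4)$-triangulation whose missing edges are $K_2\cup P_3$. We then repair it with edge flips, as in the orientable Lemma \ref{lem-orient11}, so that the four missing edges form a single path.

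For $n=12s+11$ with $s\geq 1$, we use the current graphs of Figures 8.18--21 of Ringel \cite{Ringel-MapColor}. Lemma \ref{lem-prevknown} cites these for $n\equiv 11 \pmod{12}$. They generate triangular embeddings of $K_n-K_2$, with the missing edge joining the two lettered vertices $x,y$ coming from (V1) vortices, and they contain a broken 2-ladder. By Lemma \ref{lem-broken} we choose one subtractible crosscap $(b,p,d,c,a)$, namely $b=0$, $p=\gamma$, $d=\gamma-\beta$, $c=2\gamma$, $a=\alpha+\gamma$. Deleting the path $[a,c,b,d]$ leaves the missing edges $(x,y)$, $(a,c)$, $(c,b)$, $(b,d)$.

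Next we look for a single edge flip $(u,v)\to(x,y)$ with $u$ an endpoint of the missing path, say $u=a$ or $u=d$. For this we need $x$ and $y$ to be separated by exactly one vertex $v$ in the rotation of $a$ (or $d$). The flip then deletes $(u,v)$ and restores $(x,y)$, so the missing path becomes $[v,a,c,b,d]$ or $[a,c,b,d,v]$, which is a $P_4$. If no endpoint has this form, we use a short sequence of flips $(u,v)\to(u',v')\to\dotsb\to(x,y)$ that starts at an endpoint of the path, exactly as in Lemma \ref{lem-orient11}. Since the vortices are Hamiltonian faces, $x$ and $y$ each appear in every numbered rotation. The separation between them is determined by the logs via the additivity rule, so we check the rotation of $a$ or $d$ in the general family and pick the crosscap (one of the $s$ translates in Lemma \ref{lem-broken}) whose endpoint has $x$ and $y$ at distance two. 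We must also make sure the flipped edge is not one of the face edges already used by the subtraction, and that the flip creates no multiple edge.

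For the small cases we expect to write explicit rotation systems or use small current graphs. The case $n=11$ can use Figure \ref{fig-smallcurrent}(b) with the crosscap $(0,3,5,6,2)$ and then an ad hoc flip. The case $n=23$ may also need separate treatment if the general family starts later. Each of these can be checked directly by computer-style verification.

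The main obstacle is the global condition that the two lettered vertices sit at distance two in the rotation of a path endpoint, uniformly in $s$. Here the current assignments of the known families may not cooperate, and we may have to alter the ladder or use a flip sequence whose intermediate edges must be tracked carefully so that none coincides with an already-missing edge.
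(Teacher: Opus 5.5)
\textbf{There is a genuine gap.} Your strategy is exactly what the paper does for $s=1$: subtract one crosscap from a $K_n-K_2$ triangulation, then flip $(u,v)\to(x,y)$ at an end of the path $[a,c,b,d]$. But the step that carries the whole argument, an endpoint whose rotation reads $x,v,y$, is never verified. For the current graphs you propose, it cannot hold.

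\begin{itemize}
\item In a derived embedding, a numbered rotation contains $x,v,y$ consecutively exactly when the corresponding log reads $x,g,y$. That means the vortices $x$ and $y$ are joined by an edge of the current graph.
\item Ringel's families for $n\equiv 11 \pmod{12}$ appear to be index 1. The paper builds his index 3 family for $n\equiv 3,9$ by combining three copies of them, and the $\Z_9$ graph you want for $n=11$ is also index 1. So $x$ and $y$ have degree 1, and two degree-1 vertices in a connected current graph cannot be adjacent.
\item At index 1 every numbered rotation is a translate of the same log. Choosing a different translate of the crosscap therefore changes nothing.
\item The subtraction does not help either. At $a$ (resp.\ $d$) it only removes $c$ (resp.\ $b$) from the rotation, and that vertex is flanked by the numbered vertices $p,b$ (resp.\ $p,c$).
\end{itemize}

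So the flip you describe never exists. What is left is your fallback, an unspecified ``short sequence of flips'' that must work uniformly in $s$. That is the real content of the lemma, and it is missing. The same applies to the ``ad hoc flip'' for $n=11$ and to $n=23$.

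You also never certify nonorientability, and it is not automatic. The Euler genus $24s^2+30s+8$ is even, and $K_n-P_4$ also has an orientable triangular embedding (Lemma \ref{lem-orient11}). You would need something like a surviving independent subtractible crosscap.

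The paper handles the lemma in three pieces:
\begin{itemize}
\item For $n=11$ it gives an explicit nonorientable rotation system.
\item For $n=23$ it uses a purpose-built index 3 current graph over $\Z_{21}$ (Figure \ref{fig-non11-s1}). Its vortices have degree 3 and \emph{are} adjacent, so your plan works there: subtract the crosscap $(0,3,5,6,4)$, then flip $(4,14)\to(x,y)$ to get the path $[14,4,6,0,5]$.
\item For $s\ge 2$ it abandons $K_n-K_2$. It reruns the $K_n-K_5$ construction of Lemma \ref{lem-orient11} but joins $[0,b,a]$ and $[c,e,d]$ by a \emph{twisted} handle. This makes the surface nonorientable and leaves $(a,c)$ missing instead of $(b,c)$; the same flip sequences then produce the path.
\end{itemize}
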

\begin{proof}
For $s = 0$, we use the following triangular rotation system:

$$\begin{array}{rccccccccccccccccccccccccccccccccc}
0. & 2 & 7 & 10 & 3 & 5 & 6 & 8 & 9 & 4 \\
1. & 4 & \TT{7} & \TT{3} & 9 & 6 & 10 & 5 & 8 \\
2. & 0 & 4 & 6 & 5 & 10 & 9 & 8 & 7 \\
3. & 0 & 10 & 8 & 6 & 7 & \TT{1} & \TT{9} & 5 \\
4. & 0 & 9 & \TT{5} & \TT{7} & 1 & 8 & 10 & 6 & 2 \\
5. & 0 & 3 & \TT{9} & \TT{4} & 7 & 8 & 1 & 10 & 2 & 6 \\
6. & 0 & 5 & 2 & 4 & 10 & 1 & 9 & 7 & 3 & 8 \\
7. & 0 & 2 & 8 & 5 & \TT{4} & \TT{1} & 3 & 6 & 9 & 10 \\
8. & 0 & 6 & 3 & 10 & 4 & 1 & 5 & 7 & 2 & 9 \\
9. & 0 & 8 & 2 & 10 & 7 & 6 & 1 & \TT{3} & \TT{5} & 4 \\
10. & 0 & 7 & 9 & 2 & 5 & 1 & 6 & 4 & 8 & 3 \\
\end{array}$$

In each of our nonorientable rotation systems, the red underlined numbers indicate twisted edges. Nonorientability can be checked by observing that there is a vertex adjacent to every other vertex and not incident with any twisted edge. 

\begin{figure}[htbp]
\centering
\includegraphics[scale=1]{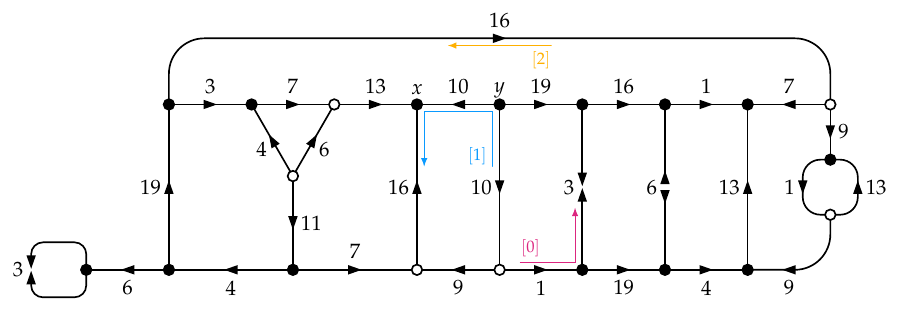}
\caption{An index 3 current graph with current group $\mathbb{Z}_{21}$.}
\label{fig-non11-s1}
\end{figure}

For $s = 1$, we refer to the current graph in Figure \ref{fig-non11-s1} containing a broken 2-ladder. After subtracting the crosscap with $(b, p, d, c, a) = (0, 3, 5, 6, 4)$, we flip $(4, 14) \to (x,y)$ so that the missing edges form the path $[14, 4, 6, 0, 5]$. 

For $s \geq 2$, we can reuse the construction from Lemma \ref{lem-orient11}. Instead of connecting the two faces $[0, b, a]$ and $[c, e, d]$ by a handle, we connect them by a \emph{twisted} handle: by reversing the orientation of the face $[0, b, a]$, the roles of vertices $a$ and $b$ are switched so that the missing edge is $(a,c)$ instead of $(b,c)$. The edge flips that comprise the rest of the construction are unchanged. 
\end{proof}

\begin{figure}[htbp]
\centering
\includegraphics[scale=1]{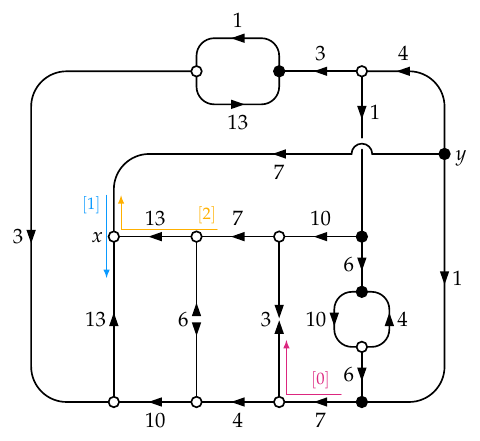}
\caption{An index 3 current graph with current group $\mathbb{Z}_{15}$.}
\label{fig-non5-s1}
\end{figure}

\begin{lemma}
For $n = 12s + 5$, $s \geq 1$, there exists a nonorientable triangular embedding of $K_n-P_4$. 
\label{lem-skrek5}
\end{lemma}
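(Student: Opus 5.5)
We handle $s = 1$ separately from $s \geq 2$, following the pattern of Lemma \ref{lem-skrek11}. In every case we start from a nonorientable triangular embedding of a near-complete graph on $n = 12s+5$ vertices with an $(n,1)$ or $(n,4)$ structure. We then use crosscap subtraction (Construction \ref{con-subtract}) together with a small number of edge flips until the four missing edges form a single path.

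For $s = 1$ we use the index 3 current graph with current group $\mathbb{Z}_{15}$ in Figure \ref{fig-non5-s1}. By analogy with the $s = 1$ case of Lemma \ref{lem-skrek11}, it has two vortices $x,y$ of type (V1). Its derived embedding is therefore a nonorientable triangular embedding of $K_{17}-K_2$, with $(x,y)$ the missing edge. The steps are as follows.
\begin{enumerate}
\item Locate a broken 2-ladder, or failing that an \emph{ad hoc} subtractible crosscap $(b,p,d,c,a)$ among numbered vertices.
\item Subtract it. This deletes the path $[a,c,b,d]$, so the missing edges are now $(x,y)$ together with a $P_3$.
\item Find a vertex $u$ at an endpoint of that path, say $u = a$ or $u = d$, whose rotation contains the pattern $\dotsc x\ v\ y \dotsc$ with $v$ some other vertex.
\item Flip $(u,v)\to(x,y)$ as in Construction \ref{con-k2p2}. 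This restores $(x,y)$ and deletes $(u,v)$, so the missing edges become the path $[v,u,\dotsc]$ of length 4.
\end{enumerate}
If no such $u$ exists at an endpoint, we look instead for a short sequence of edge flips that transfers the missing edge to an endpoint of the path. Checking these conditions against the explicit logs is routine.

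For $s \geq 2$ we reuse the orientable strategy, which was carried over to the nonorientable setting for $n \equiv 11$ in Lemma \ref{lem-skrek11}. We take a current graph family for $K_{12s+5}-K_m$ whose lettered vertices are all of type (V1) vortices. For example, we could use index 3 families with group $\mathbb{Z}_{12s}$ and five vortices, as in Sun \cite{Sun-Minimum}, or adapt Ringel's Figures 8.18--21 family, which already contains a broken 2-ladder and so gives $(n,t)$-triangulations for $t \equiv 1 \pmod 3$ by Lemma \ref{lem-prevknown}. The cleanest route is the following.
\begin{enumerate}
\item Start from Ringel's nonorientable triangular embedding of $K_{12s+5}-K_2$, in which two (V1) vortices $x,y$ leave $(x,y)$ missing.
\item Subtract a single crosscap from the broken 2-ladder using Lemma \ref{lem-broken}, giving missing edges $(x,y)$ plus the path $[a,c,b,d]$.
\item Flip an edge $(u,v)\to(x,y)$, with $u\in\{a,d\}$ and $x\ v\ y$ consecutive in the rotation of $u$.
\end{enumerate}
For a family this needs only one uniform check. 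The vortices appear in the logs at fixed positions, so the vertex $u=a$ (or $d$), which is determined by the currents $\alpha,\beta,\gamma$ of the broken 2-ladder, has its rotation given by the additivity rule. The question is whether $x$ and $y$ sit two apart in that rotation for some choice of the ladder position.

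The main obstacle is this compatibility between the position of the broken 2-ladder and the vortices $x,y$. In the known families the vortices need not be adjacent in any circuit to the rung circuit at the right offset. If they are not, I would first try a two-step flip sequence of the form $(u,v)\to(w,z)\to(x,y)$, as in Lemma \ref{lem-orient11}, where the intermediate edge exits at an endpoint of the path. Failing that, I would modify the current graph locally near the vortices so that they become adjacent on the appropriate circuit, keeping properties (C1)--(C6). Nonorientability is automatic: the original embedding contains a subtractible crosscap, and if several remain this follows from Ringel's shortcut. In any case, a vertex adjacent to all others with no twisted incident edges certifies it directly.
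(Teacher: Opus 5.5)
Your skeleton matches the paper's. You start from a nonorientable triangulation of $K_{12s+5}-K_2$ coming from a current graph with two (V1) vortices $x,y$. You subtract one crosscap supplied by a broken 2-ladder. You then flip an edge $(u,v)\to(x,y)$, where $u$ is an endpoint of the deleted path $[a,c,b,d]$ (you also need $v\notin\{b,d\}$). For $s=1$ the paper does exactly this with Figure~\ref{fig-non5-s1}: it subtracts $(b,p,d,c,a)=(0,3,8,6,10)$ and flips $(10,2)\to(x,y)$, leaving the path $[2,10,6,0,8]$. You leave these data unspecified, so that case is still only a plan.

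\textbf{The gap is the case $s\ge 2$, which is where the content of the lemma lies.} You never exhibit a current graph in which $x$ and $y$ appear as $x\ v\ y$ in the rotation of an endpoint of a broken-2-ladder crosscap, and the family you propose cannot provide this.
\begin{itemize}
\item Ringel's families for $n\equiv 5 \pmod{12}$ are index 1. (The paper obtains an index 3 family by combining three copies of them.) So each vortex has degree 1.
\item In the log, a pendant vortex $x$ is flanked by the two traversals of its own edge. For $y$ to come one entry after $x$, the arc leaving $x$ would have to enter $y$. That would make $x$ and $y$ adjacent, which is impossible for two pendant vertices of a connected current graph.
\item Hence $x$ and $y$ are never two apart in any rotation, so no edge of the derived embedding has $x$ and $y$ as the apexes of its two incident triangles.
\item Consequently neither your single flip $(u,v)\to(x,y)$ nor your fallback $(u,v)\to(w,z)\to(x,y)$ exists. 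The last step of the fallback needs an original edge $(w,z)$ whose apexes are $x$ and $y$.
\end{itemize}
The alternative $\mathbb{Z}_{12s}$ five-vortex family you mention is likewise only gestured at.

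Your final fallback, ``modify the current graph locally,'' is exactly the missing construction. The paper supplies new index 3 current graphs with group $\mathbb{Z}_{12s+3}$ (Figure~\ref{fig-skrek5}). There the degree-3 vortices are joined by an edge, and the broken 2-ladder is placed so that, after subtracting $(0,3,6s+5,6,6s+7)$, the rotation of $a=6s+7$ contains $x,\ 6s+8,\ y$ consecutively. The flip $(6s+7,6s+8)\to(x,y)$ then leaves the missing path $[6s+8,6s+7,6,0,6s+5]$. Without such a family, with (C1)--(C6) and the divisibility of the rung currents by 3 verified, the $s\ge 2$ case is unproved.

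A minor point: nonorientability needs no shortcut. Any triangular embedding of $K_{12s+5}-P_4$ has odd Euler genus $24s^2+6s-1$, so it is automatically nonorientable.
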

\begin{proof}
Consider the current graphs in Figures \ref{fig-non5-s1} and \ref{fig-skrek5}. We apply the same approach as the $s = 1$ case in Lemma \ref{lem-skrek11}. For $s = 1$ (resp.\ $s \geq 2$), after subtracting the crosscap $(b, p, d, c, a) = (0, 3, 8, 6, 10)$ (resp.\ $(b, p, d, c, a) = (0, 3, 6s+5, 6, 6s+7)$) using the broken 2-ladder, we flip the edge $(10, 2) \to (x, y)$ (resp.\ $(6s+7, 6s+8) \to (x, y)$) to obtain the missing path $[2, 10, 6, 0, 8]$ (resp. $[6s+8, 6s+7, 6, 0, 6s+5]$).
\end{proof}

\begin{figure}[htbp]
\centering
\includegraphics[scale=1]{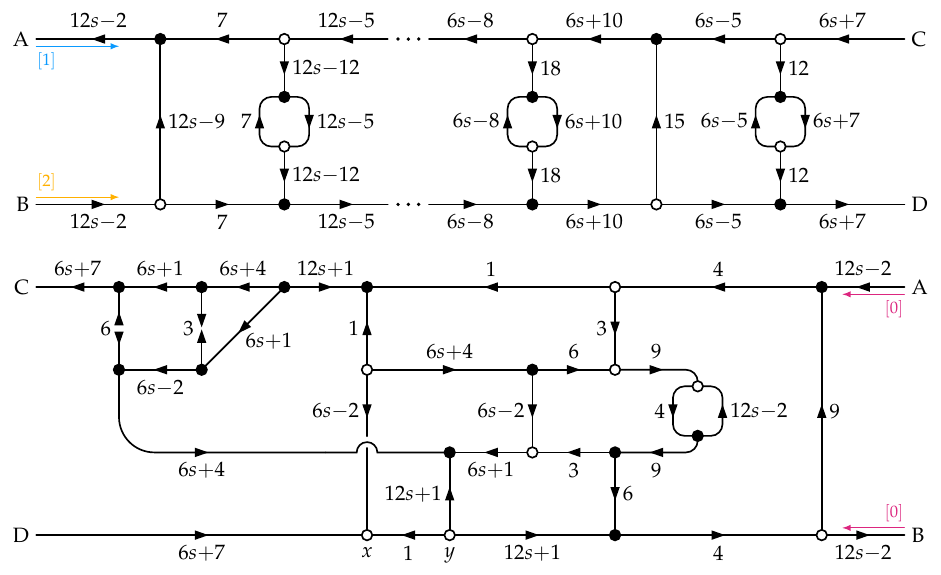}
\caption{Index 3 current graphs with group $\Z_{12s+3}$ for $s \geq 2$.}
\label{fig-skrek5}
\end{figure}

\begin{lemma}
For $n = 12s + 8$, $s \geq 0$, there exists a nonorientable triangular embedding of $K_n-P_4$. 
\label{lem-skrek8}
\end{lemma}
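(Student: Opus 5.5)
The plan is to follow the pattern of Lemmas \ref{lem-skrek11} and \ref{lem-skrek5}, starting from the $(n,4)$-triangulations of Lemma \ref{lem-min8}. For $n = 12s+8$, $s \geq 1$, take the index 1 current graph of Figure \ref{fig-non8} with group $\Z_{12s+6}$. It has one vortex $x$ of type (V1) and one vortex $y$ of type (V2). Applying Construction \ref{con-con8} with $a = 6s+4$ gives a triangular embedding of $K_{12s+8}$ with the following edges missing: the path $[6s+4, 0, 6s+3, 12s+7]$ (indices mod $12s+6$, so $12s+7 = 1$) and the edge $(x,y)$. This is the graph $K_n - (K_2 \cup P_3)$.

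Rather than subtracting a crosscap, as in the odd residues, I would use a single edge flip to merge $(x,y)$ into the path. I need a vertex $u$ whose rotation contains a consecutive triple $x, v, y$, where $u$ or $v$ is an endpoint of the missing path, say $u = 6s+4$. Flipping $(u,v) \to (x,y)$, as in Construction \ref{con-k2p2}, then deletes $(u,v)$ and restores $(x,y)$, so the missing edges become the path $[v, 6s+4, 0, 6s+3, 1]$, a $P_4$.

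Such triples $x, v, y$ are plentiful. In the log of the unique circuit, the vortex labels $x$ and $y$ sit next to the currents on their incident arcs. By the additivity rule, every numbered vertex $u$ sees $x$ and $y$, and the element separating them is determined by the local structure of the current graph near the two vortices. I would check this adjacency on the figure: if $x$ and $y$ share a neighbor (or are joined by a short path) in the current graph, the log reads $\dotsc x\, \delta\, y \dotsc$. The flip is then $(u, u+\delta) \to (x,y)$, and I would choose $u \in \{6s+4, 1\}$ or $u+\delta$ equal to an endpoint, solving for $u$.

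The main obstacle is making this compatible with Construction \ref{con-con8}. The contraction of $y_0, y_1$ changes the rotations near $0$ and $6s+3$, and the triple $x, v, y$ at the chosen $u$ must survive: in particular, $y$ must now be a single vertex, and the faces $[u,x,v]$ and $[u,v,y]$ must still be present. The flipped edge $(u,v)$ must not be one already deleted. If the figure's current graph does not supply such a triple near the path endpoints, I would instead use a short sequence of flips, as in Lemma \ref{lem-orient11}. The embedding remains nonorientable because flips do not change the surface.

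The case $s = 0$, $n = 8$, lies outside the family. Here the Euler lower bound requires a triangular embedding of $K_8 - P_4$ in $N_2$ (since $28-4-18 = 6 = 3\cdot 2$). I would give an explicit rotation system with twisted edges, found by hand or computer search, in the same format as the $n=11$ case of Lemma \ref{lem-skrek11}. I would verify that all faces are triangles and certify nonorientability by a cycle with an odd number of twisted edges; Euler genus $2$ with a nonorientable embedding means the Klein bottle. The existence of this embedding is plausible because $K_8 - P_4$ fails only on the torus (Duke and Haggard).
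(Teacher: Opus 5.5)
There is a genuine gap in the general case. The single flip $(u,v)\to(x,y)$ you rely on cannot exist in the embedding you start from.

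\textbf{Why the flip fails.} The current graph of Figure \ref{fig-non8} has index $1$, so each vortex has degree $1$. In the log of the unique circuit, the label $x$ is therefore flanked by $\alpha$ and $-\alpha$, and the label $y$ by $\beta$ and $-\beta$, where $\alpha,\beta$ are the currents entering $x$ and $y$. Your claim that a shared neighbor gives $\dots x\,\delta\,y\dots$ is wrong: a shared neighbor gives $\dots x,-\alpha,\beta,y\dots$, with two numbered entries in between. A pattern $x,v,y$ requires an arc from $x$ to $y$ in the current graph, which is impossible for pendant vortices.

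Equivalently, look at the faces. The triangles at $x$ are $[x,i,i+\alpha]$ with $\alpha$ a generator of $\Z_{12s+6}$, hence odd. The triangles at $y_0,y_1$ are $[y_j,i,i+\beta]$ with $\beta$ even. So no edge lies in both an $x$-triangle and a $y$-triangle. After Construction \ref{con-con8} with $a=6s+4$, the only new $y$-triangles are $[y,6s+4,6s+3]$ and $[y,1,0]$. Even if the face across $(6s+3,6s+4)$ or $(0,1)$ contains $x$, flipping that edge leaves a triangle plus a pendant edge missing, not a $P_4$. Your fallback, ``a short sequence of flips,'' is where all the real content would have to go, and it is left unspecified.

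\textbf{What the paper does instead.} This is exactly why the paper does not reuse Figure \ref{fig-non8}. For $s=1$ it uses a new index $3$ current graph (Figure \ref{fig-skrek8-s1}), in which the vortices can be adjacent. It applies Construction \ref{con-con8} with $a=7$, and then your single-flip idea does work: $(7,8)\to(x,y)$ leaves the missing path $[8,7,0,9,16]$.

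For $s\ge 2$ it uses new index $1$ current graphs (Figure \ref{fig-skrek8}) with $a=28$ or $a=4$. It then applies a specific flip sequence such as $(4,x)\to(3,5)\to(6s+10,12s+4)\to(6s+3,6s+7)$. The net effect is to delete $(a,x)$ and restore $(6s+3,6s+3+a)$. So $(x,y)$ is never restored; it becomes part of the missing path $[6s+3,0,a,x,y]$.

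For $n=8$, your plan is fine in principle but unexecuted. You can simply cite a known Klein bottle triangulation, as the paper does with ``Kh2'' and ``Kh5'' of Lawrencenko and Negami.
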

\begin{proof}
$K_8-P_4$ has a triangular embedding in the Klein bottle (see, e.g., the embeddings ``Kh2'' and ``Kh5'' in Lawrencenko and Negami \cite{LawrencenkoNegami}).

For $s = 1$, we use the index 3 current graph in Figure \ref{fig-skrek8-s1}. Like its orientable counterpart in Lemma \ref{lem-orient8}, we apply Construction \ref{con-con8} with $a = 7$, and then flip the edge $(7, 8) \to (x, y)$ to obtain the path $[8, 7, 0, 9, 16]$. 

\begin{figure}[htbp]
\centering
\includegraphics[scale=1]{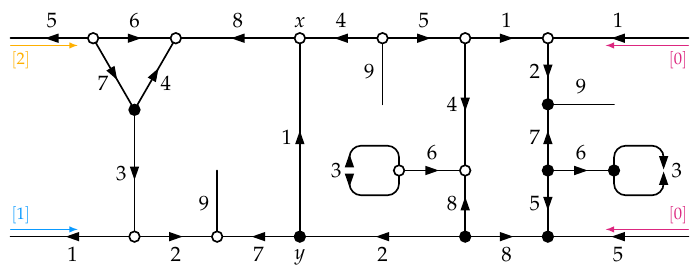}
\caption{Index 3 current graph with group $\Z_{18}$.}
\label{fig-skrek8-s1}
\end{figure}

For $s \geq 2$, we use the index 1 current graphs in Figure \ref{fig-skrek8}. For $s = 2$ (resp.\ $s \geq 3$), apply Construction \ref{con-con8} with $a = 28$ (resp. $a = 4$) and flip the edges $(28, x) \to (9, 17) \to (6, 18) \to (13, 15)$ (resp.\ $(4, x) \to (3, 5) \to (6s+10, 12s+4) \to (6s+3, 6s+7)$). In both cases, the missing path is $[6s+3, 0, a, x, y]$.  
\end{proof}

\begin{figure}[htbp]
\centering
    \begin{subfigure}[b]{0.99\textwidth}
        \centering
        \includegraphics[scale=1]{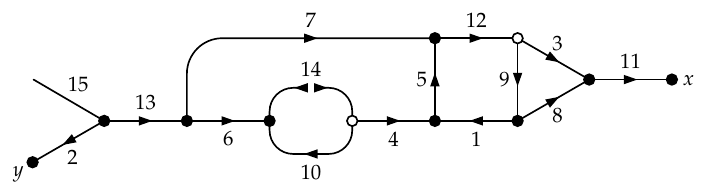}
        \caption{}
    \end{subfigure}
    \begin{subfigure}[b]{0.99\textwidth}
        \centering
        \includegraphics[scale=1]{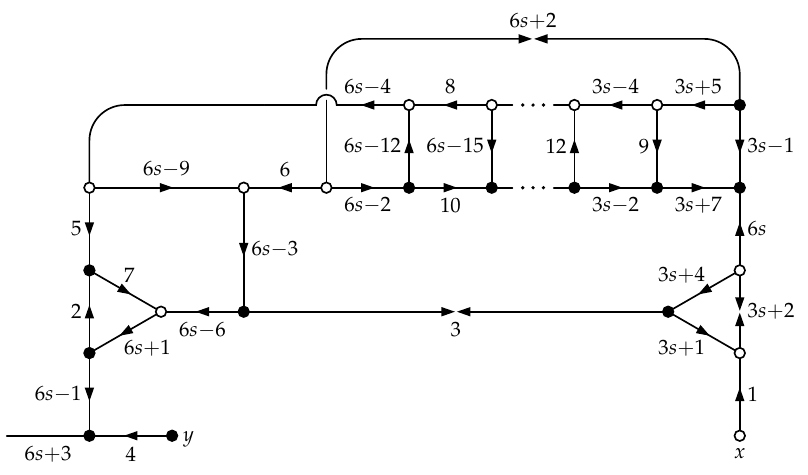}
        \caption{}
    \end{subfigure}
\caption{Index 1 current graphs with group $\Z_{12s+6}$ for $s = 2$ (a) and $s \geq 3$ (b).}
\label{fig-skrek8}
\end{figure}

\begin{remark}
The $(8,4)$-triangulation above is a minimum triangulation for the surface $N_2$, the other nonorientable exception in Theorem \ref{thm-mintri}.
\end{remark}

\begin{lemma}
For $n = 12s + 2$, $s \geq 1$, there exists a nonorientable triangular embedding of $K_n-P_4$. 
\label{lem-skrek2}
\end{lemma}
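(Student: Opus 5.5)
Following the approach of Lemmas \ref{lem-skrek11} and \ref{lem-skrek5}, I will start from a nonorientable triangular embedding of a near-complete graph on $12s+2$ vertices. I then delete three edges by one crosscap subtraction (Construction \ref{con-subtract}) and use edge flips to merge the missing edges into a single path of length $4$.

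The most natural starting point is a nonorientable triangular embedding of $K_{12s+2}-K_2$. For $s=1$ this is the index 2 current graph of Figure \ref{fig-smallcurrent}(d). There the crosscap $(b,p,d,c,a)=(0,10,8,9,y)$ is already subtractible, and the missing edges become $(x,y)$ together with the path $[y,9,0,8]$. Here $x,y$ are the lettered vertices coming from a vortex of type (V2), or the lettered pair in whatever way the missing $K_2$ arises. So I will pick the subtraction so that the deleted path already shares an endpoint with the missing $K_2$. If the path $[a,c,b,d]$ has $a=y$, then the missing edges $(x,y),(y,c),(c,b),(b,d)$ already form the path $[x,y,c,b,d]$, and no flip is needed. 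For $s=1$ this happens with $a=y$: the missing set is $[x,y,9,0,8]$, a $P_4$. So $s=1$ is settled directly by the crosscaps listed in Proposition \ref{prop-smallmin}.

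For $s\ge 2$, I will use a family of current graphs generating nonorientable triangular embeddings of $K_{12s+2}-K_2$, for instance the index 2 family of Sun \cite{Sun-Index2}, or a new index 2 or 3 family with group $\Z_{12s}$ and lettered vertices $x,y$ (with $(x,y)$ missing). The required feature is a subtractible crosscap whose vertex $a$ (or $d$) is a lettered vertex, as in the $n=14$ example. Such a crosscap lives near a vortex rather than on a broken 2-ladder, so Lemma \ref{lem-broken} does not apply, and it must be read directly from the logs: the rotation at $p$ must contain $\dots a\,c\dots b\,d\dots$ with $a$ lettered and $b,c,d$ numbered. Alternatively, if only a broken 2-ladder with all-numbered vertices is available, I will subtract that crosscap to get missing $(x,y)$ plus a path $[a,c,b,d]$. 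Then I perform a single flip $(a,v)\to(x,y)$, where $a$'s rotation contains $\dots x\,v\,y\dots$, as in Lemmas \ref{lem-skrek5} and \ref{lem-skrek8}, giving the path $[v,a,c,b,d]$.

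The main obstacle is exhibiting, uniformly in $s$, a current-graph family in which a rotation contains the needed pattern $x\,v\,y$ at an endpoint of the subtracted path (or the lettered endpoint directly), so that no further repair flips are required. I would first try to modify the ladder ends of the existing $K_{12s+2}-K_2$ family so that the vortex sits adjacent to a broken 2-ladder, and check that flipping at the vortex does not interfere with the crosscap faces. Small values of $s$ that fall outside the family (likely $s=2$) would be handled by sporadic current graphs or explicit rotation systems, as in Lemma \ref{lem-skrek8}.
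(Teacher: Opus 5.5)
For $s=1$ your argument is sound, and it takes a different route from the paper's. By (C2) and the vertex count, the Figure~\ref{fig-smallcurrent}(d) embedding comes from two (V1) vortices $x,y$. A (V2) vortex would instead give two vertices each adjacent to only six numbered vertices. So the missing edge is $(x,y)$, and subtracting the crosscap $(0,10,8,9,y)$ of Proposition~\ref{prop-smallmin} leaves exactly the path $[x,y,9,0,8]$. The next listed crosscap $(2,0,10,11,y)$ survives and certifies nonorientability.

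For $s\ge 2$, however, there is a genuine gap. You never exhibit the embedding, and what you call the ``main obstacle'' is the whole content of the lemma. Your proposed starting point is also not available in the form you need:
\begin{itemize}
\item The $K_{12s+2}-K_2$ family of \cite{Sun-Index2} enters this paper only as an \emph{orientable} triangulation (Proposition~\ref{prop-skrek}, Corollary~\ref{cor-2610}). An orientable triangulation has no subtractible crosscap, so Construction~\ref{con-subtract} cannot even start.
\item The only nonorientable $(12s+2,1)$-triangulations at hand for $s\ge 2$ are outputs of Lemma~\ref{lem-induction}. These are surgically glued rather than derived from a single current graph with lettered $x,y$. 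So there are no logs in which to find a crosscap with $a$ lettered, or a pattern $x\,v\,y$ at an endpoint of the deleted path.
\end{itemize}

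The missing idea is to avoid flips altogether and feed $t=4$ straight into Lemma~\ref{lem-induction}, with $r=3s$ and $i=0$. The required $(6s+2,4)$-triangulation is a triangular embedding of $K_{6s+2}-P_4$. It has a vertex adjacent to all others, since $P_4$ meets only five vertices. It is supplied as follows:
\begin{itemize}
\item For odd $s$, $6s+2\equiv 8\pmod{12}$, and Lemma~\ref{lem-skrek8} provides it (the Klein bottle $K_8-P_4$ when $s=1$).
\item For even $s$, $6s+2\equiv 2\pmod{12}$, and induction on $s$ provides it.
\end{itemize}
In the current-graph part, every odd vertex is adjacent to all numbered vertices. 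The only edges among even vertices are the boundary edges $(v,v\pm 2r)$ of the vortex face. After gluing, therefore, the edges among even vertices are exactly those of the small triangulation minus $a'$. Hence the missing edges of the resulting $(12s+2,4)$-triangulation are precisely the input $P_4$. Nonorientability comes for free from the broken 2-ladder crosscaps via Ringel's shortcut.
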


\begin{proof}
We induct using the surgical construction in Lemma \ref{lem-induction} with $t = 4$ and $r = 3s$. If $2r+2 \equiv 8 \pmod{12}$ (which includes the base case $s = 1$), we use the embeddings from Lemma \ref{lem-skrek8}. Otherwise, $2r+2 \equiv 2 \pmod{12}$ and existence follows by induction. 
\end{proof}

\begin{remark}
We note that, unlike in Corollary \ref{cor-2610}, the application of Lemma \ref{lem-induction} covers the $s = 1$ case because of the existence of an $(8, 4)$-triangulation. Thus, Lemma \ref{lem-induction} can also construct a nonorientable $(14, 7)$-triangulation. However, we still needed a separate construction for the $(14, 1)$-triangulation, since there are no $(8, 1)$-triangulations. 
\end{remark}

We are left with one final case that was not involved in the minimum triangulations problem:

\begin{proposition}
The nonorientable genus of $K_7-C_5$ is $1$.
\label{prop-non7}
\end{proposition}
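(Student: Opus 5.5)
The plan is to match the Euler lower bound from below and above. The graph $K_7-C_5$ has $7$ vertices and $\binom{7}{2}-5=16$ edges. A planar simple graph on $7$ vertices has at most $3\cdot 7-6=15$ edges by Proposition \ref{prop-eulerupper}, so $K_7-C_5$ is nonplanar. Equivalently, Corollary \ref{cor-eulerlower} gives $\tilde{\gamma}(K_7-C_5)\geq \lceil (16-21+6)/3\rceil = 1$. It therefore suffices to exhibit an embedding in the projective plane $N_1$.

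For the construction, I would first identify the graph structurally. Label the deleted $5$-cycle $[v_1,v_2,v_3,v_4,v_5]$, and call the remaining two vertices $u$ and $w$. Then $u$ and $w$ are adjacent to everything, and the subgraph induced on $\{v_1,\dotsc,v_5\}$ is the complement of $C_5$, which is again a $5$-cycle. Hence $K_7-C_5\cong K_2+C_5$, with the $5$-cycle $D$ consisting of the non-deleted edges among the $v_i$.

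Next, I would take the planar double wheel. Draw $D$ in the plane, put $u$ inside adjacent to all vertices of $D$, and put $w$ outside adjacent to all vertices of $D$. This is a plane triangulation of $(K_2+C_5)-(u,w)$ with $15$ edges. Choose an edge $(v_i,v_j)$ of $D$. It lies on a triangle $[u,v_i,v_j]$ inside and on a triangle $[w,v_i,v_j]$ outside. Draw the missing edge $(u,w)$ through these two faces, crossing $(v_i,v_j)$ exactly once. This gives a drawing of $K_7-C_5$ in the plane with a single crossing. Replacing a small disk around the crossing point by a crosscap, and routing $(u,w)$ and $(v_i,v_j)$ through it so that they no longer meet, yields an embedding in $N_1$.

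I would then check that this embedding is cellular, which it is, since a nonplanar connected graph embedded in $N_1$ is automatically cellular. Alternatively, I could write down the resulting rotation system with one twisted edge and verify that it has $16-7+1=10$ faces. The only mild obstacle is making that verification explicit if a combinatorial certificate is preferred. In that case, I would list the rotations of the double wheel, insert $w$ into the rotation of $u$ between $v_i$ and $v_j$ (and vice versa), twist the edge $(u,w)$, and trace the faces to confirm that there are $10$ of them. The expected count is eight triangles and two quadrilaterals, giving Euler characteristic $1$.
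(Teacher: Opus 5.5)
Your proof is correct, but it takes a different route from the paper. The paper starts from the triangular embedding of $K_6$ in $N_1$ (Theorem~\ref{thm-mct-nonorient}), deletes one edge, and inserts a new degree-$4$ vertex into the resulting quadrilateral. This gives a triangular embedding of $K_7-(K_2\cup P_2)$ in $N_1$. Since $K_2\cup P_2$ lies inside a $5$-cycle on the same seven vertices, $K_7-C_5$ is a subgraph and so embeds in $N_1$ as well. The paper leaves this last deletion step implicit, in the spirit of Proposition~\ref{prop-del}.

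You instead identify $K_7-C_5\cong K_2+C_5$ and observe that the planar double wheel plus the edge $(u,w)$ has a drawing with a single crossing, which a crosscap resolves. Your argument is self-contained and elementary: it needs no appeal to the map color theorem, though $K_6$ in $N_1$ is classical anyway. It also yields an explicit cellular embedding of $K_7-C_5$ itself. Your predicted face structure is right: eight triangles and two quadrilaterals $[u,v_i,v_j,w]$ and $[v_i,w,u,v_j]$, each using both $(u,w)$ and $(v_i,v_j)$. Your remark that a connected nonplanar graph in $N_1$ is automatically cellularly embedded is also correct, and it covers the paper's convention that embeddings are cellular.

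The paper's version, by contrast, fits its overall strategy of triangulating a supergraph with few missing edges and then deleting edges. It is also a two-line derivation from a result already in hand.
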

\begin{proof}
By Theorem \ref{thm-mct-nonorient}, there is a triangular embedding of $K_6$ in $N_1$. Delete any edge and subdivide the resulting quadrangular face with a new vertex. The resulting graph is $K_7 - (K_2 \cup P_2)$. 
\end{proof}

By combining Corollary \ref{cor-non-p3}, Lemmas \ref{lem-skrek11}--\ref{lem-skrek2}, and Proposition \ref{prop-non7}, we obtain the full nonorientable genus formula: 

\begin{theorem}
For all $n \geq 6$, the nonorientable genus of $K_n - C_5$ matches its Euler lower bound. 
\label{thm-nonorient}
\end{theorem}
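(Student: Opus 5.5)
The plan is to assemble the genus formula from the embeddings already constructed, splitting by the residue of $n$ modulo $3$ and treating the small orders separately. The Euler lower bound in Corollary \ref{cor-eulerlower} is $\lceil (n^2-7n+2)/6\rceil$ after substituting $|E| = \binom{n}{2}-5$. So it suffices to give, for each $n \geq 6$, a nonorientable embedding of $K_n - C_5$ achieving this bound.

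\textbf{Case $n \not\equiv 2 \pmod 3$, $n \geq 9$.} Corollary \ref{cor-non-p3} gives a nonorientable triangular embedding of $K_n - P_3$. I will check that $K_n - P_3$ is $3$-edge-connected; this is immediate for $n\ge 9$, since the minimum degree is at least $n-3\ge 6$ and the graph is nearly complete. A $C_5$ is a $P_3$ plus two edges: from the path $[v_1,v_2,v_3,v_4]$, delete $(v_4,v_5)$ and $(v_5,v_1)$ for a new vertex $v_5$. Proposition \ref{prop-del}, applied in its nonorientable form with two deleted edges, then yields a nonorientable genus embedding of $K_n - C_5$.

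\textbf{Case $n \equiv 2 \pmod 3$, $n \geq 8$.} Lemmas \ref{lem-skrek11}, \ref{lem-skrek5}, \ref{lem-skrek8} and \ref{lem-skrek2} provide nonorientable triangular embeddings of $K_n - P_4$ for $n \equiv 11, 5, 8, 2 \pmod{12}$ with $n \geq 8$. Deleting the single edge joining the endpoints of the path produces $K_n - C_5$. A triangulation with one edge removed attains the nonorientable Euler bound, by the computation in Proposition \ref{prop-del} with one deleted edge. The $n \equiv 5 \pmod{12}$ family starts at $n=17$, so $n=5$ lies outside the range.

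\textbf{Remaining small $n$.} Of $n \geq 6$, the orders not covered above are $n=6$ and $n=7$. For $n=7$, Proposition \ref{prop-non7} gives an embedding of $K_7 - (K_2\cup P_2)$ in $N_1$. The task is to find a $C_5$ containing those three missing edges, e.g. $K_2=\{u,w\}$ and $P_2=[a,b,c]$ with cycle $[u,w,a,b,c]$, then delete its other two edges. The Euler bound here is $\lceil 2/6\rceil=1$, so $N_1$ is optimal.

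For $n=6$, $K_6 - C_5 = K_1 + \overline{C_5} = K_1 + C_5$, a wheel, which is planar. Since the formula gives $\lceil -4/6\rceil = 0$, presumably the intended statement for nonorientable genus interprets the $n=6$ case as $N_1$ via adding a crosscap, or Theorem \ref{thm-main-nonorient} only claims $n\ge 7$; I will note that for $n=6$ the graph embeds in $N_1$ trivially.

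The main obstacle is the bookkeeping: making sure every residue and small case is covered, and verifying the connectivity hypothesis of Proposition \ref{prop-del} for the $K_n-P_3$ embeddings.
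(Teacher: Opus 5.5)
Your proposal is correct and matches the paper's proof. The paper likewise combines Corollary \ref{cor-non-p3} with two edge deletions via Proposition \ref{prop-del} for $n \not\equiv 2 \pmod 3$, Lemmas \ref{lem-skrek11}--\ref{lem-skrek2} with one edge deletion for $n \equiv 2 \pmod 3$, and Proposition \ref{prop-non7} for $n=7$. Your unease about $n=6$ is justified, since the paper's proof does not treat that case either (Theorem \ref{thm-main-nonorient} is stated only for $n \geq 7$). Because $K_6-C_5$ is the planar wheel, embedding it in $N_1$ gives $1$ rather than the bound $0$, so the claim at $n=6$ holds only under the convention that planar graphs have nonorientable genus $0$.
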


\section{Euler genus 6 and 9}\label{sec-skrek69}

Since $h(6) = 9$ and $h(9) = 10$, the goal of this section is to characterize the $8$- and $9$-critical graphs in the surfaces of Euler genus 6 and 9, respectively. \v{S}krekovski's calculation does not cover these cases, and indeed there are other $(h(g)-1)$-critical graphs embeddable in these surfaces. For the remainder of this section, when we speak of $h$-critical graphs, $h$ is at least $4$, unless otherwise specified. Label the consecutive vertices of the 5-cycle $0, 1, 2, 3, 4$. Then define $C_5(a_0, a_1, a_2, a_3, a_4)$ to be the graph where vertex $i$ of the 5-cycle is replaced with a clique on $a_i$ vertices, and each edge $(a_i, a_{(i+1)\bmod 5})$ of the 5-cycle is replaced with the edges of the complete bipartite graph $K_{a_i, a_{(i+1) \bmod 5}}$ between the cliques. For an example of such a graph, see Figure \ref{fig-efamily}. 

\begin{figure}[htbp]
\centering
\includegraphics[scale=1]{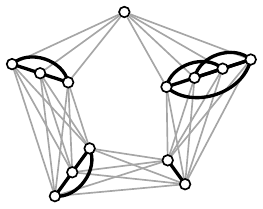}
\caption{The graph $C_5(1, 3, 3, 2, 4) \in \mathcal{E}_7$.}
\label{fig-efamily}
\end{figure}

Let $\mathcal{D}_h$ denote the family of graphs of the form $C_5(1, a, a', 1, h-2)$, where $a+a'=h-1$. Dirac \cite{Dirac-Critical} proved that $h$-critical graphs $G = (V, E)$ besides the complete graph $K_h$ satisfy $2|E| \geq (h-1)|V|+h-3$, sharpening the minimum-degree bound for $h$-critical graphs. The graphs in the family $\mathcal{D}_h$ show that this bound is tight. Gallai \cite{Gallai-Critical2} used this same family in an extension of Dirac's result, giving optimal bounds on $|E|$ for all $|V| \leq 2h-1$:

\begin{theorem}[Gallai \cite{Gallai-Critical2}]
If $G$ is an $h$-critical graph with at most $2h-2$ vertices, then $G$ is the join of two smaller critical graphs. 
\label{thm-join}
\end{theorem}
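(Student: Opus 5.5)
This is Gallai's theorem: an $h$-critical graph $G$ with $n \le 2h-2$ vertices is the join of two smaller critical graphs. The plan is the classical complement argument. Write $\bar G$ for the complement of $G$. The key step is to show that $\bar G$ is disconnected. Once that is known, let $V_1, \dotsc, V_m$ ($m\ge 2$) be the vertex sets of its components. Then $G$ is the join $G[V_1] + G[V_2 \cup \dotsb \cup V_m]$. The chromatic number of a join is additive. Every edge of $G$ either lies inside one part or joins the two parts, and removing an edge of either kind must drop the chromatic number. Because of additivity, each part must be critical, with chromatic numbers summing to $h$. So the whole theorem reduces to proving that $\bar G$ is disconnected.

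To prove the disconnection, I would argue by contradiction, assuming $\bar G$ is connected. Fix an optimal $(h-1)$-coloring idea: since $G$ is $h$-critical, for every edge $uv$, $G-uv$ has an $(h-1)$-coloring in which $u$ and $v$ share a color. Color classes in $G$ are cliques in $\bar G$. The approach, following Gallai, is to use a \emph{matching} in $\bar G$. If $\bar G$ has a matching $M$ of size $|M|$, then $G$ can be colored with $n-|M|$ colors, by giving each matched pair one color and each remaining vertex its own. Hence $h \le n - \nu(\bar G)$, so $\nu(\bar G) \le n-h \le h-2$. Conversely, criticality forces many non-edges to be coverable.

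The main step is then Gallai's lemma: if $G$ is $h$-critical and $\bar G$ is connected, then $n \ge 2h-1$. I would attempt it through the Gallai–Edmonds structure of $\bar G$. Let $D$ be the set of vertices missed by some maximum matching, $A$ the neighbors of $D$ outside $D$, and $C$ the rest. Consider first a vertex $v$ of $G$. Criticality gives an $(h-1)$-coloring of $G-v$. Pad this coloring with $v$ alone, and compare it with matching-based colorings, using the fact that components of $\bar G[D]$ are factor-critical. From this I would derive that $\chi(G)=n-\nu(\bar G)$ whenever the relevant part of $\bar G$ is connected.

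Next I would exploit criticality on the edges of $G$ to show that adding any edge to $\bar G$ increases its matching number. Removing an edge $uv$ from $G$, which means adding $uv$ to $\bar G$, lowers $\chi$ by one. This should force $\bar G$ to have the property that every pair of non-adjacent vertices can be added to increase $\nu$. In particular $\bar G$ has no perfect or near-perfect matching slack, which in the connected case forces $\bar G$ to be factor-critical with $n$ odd and $\nu = (n-1)/2$. Then $h = n-\nu = (n+1)/2$, giving $n = 2h-1 > 2h-2$, which is the contradiction.

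I expect the main obstacle to be justifying that $\chi(G)$ equals $n-\nu(\bar G)$ in the critical setting. In general, color classes can be larger cliques of $\bar G$, not just edges. I would first try to handle this by showing that in an $(h-1)$-coloring of $G-v$, the number of classes of size at least $3$ is controlled by $n\le 2h-2$. The averaging is that $n-1 \le 2h-3$ vertices sit in $h-1$ classes. I would then combine this with connectivity of $\bar G$ to re-split large classes.
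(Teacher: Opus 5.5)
The paper gives no proof of this statement; it is quoted from Gallai. So I am judging your argument on its own merits.

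\textbf{What works.} The reduction is fine: if $\overline{G}$ is disconnected, then $G$ is a join, $\chi$ is additive over joins, and criticality passes to both factors. The endgame is also fine: if every vertex of the connected graph $\overline{G}$ is missed by some maximum matching, Gallai's lemma makes $\overline{G}$ factor-critical.

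\textbf{The gap.} The two premises feeding the endgame are asserted rather than proved, and they carry the whole theorem.

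First, the identity $\chi(G)=n-\nu(\overline{G})$ cannot be derived from either hypothesis alone. It is false for critical graphs with connected complement: for $C_7$, $\chi=3$ but $7-\nu(\overline{C_7})=4$. It is also false for critical graphs with $n\le 2h-2$: for $K_3+C_7$, $h=6$, $n=10$ and $n-\nu(\overline{G})=7$. So it can only come from an argument that uses both hypotheses at once. Your averaging only shows that an optimal coloring has exactly $2h-n+e$ singleton classes, where $e=\sum_Q \max(|Q|-2,0)$. Re-splitting a large class needs a singleton that is $\overline{G}$-adjacent to one of its vertices. Connectivity of $\overline{G}$ does not supply this, because the connecting path can run through pairs in a non-alternating way.

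Second, the edge step fails separately. $\chi(G-uv)=h-1$ only gives $\nu(\overline{G}+uv)\le n-h+1$, which is an upper bound. To get $\nu(\overline{G}+uv)>\nu(\overline{G})$ you would need an $(h-1)$-coloring of the non-critical graph $G-uv$ with all classes of size at most $2$, which is the same unsolved issue. Applying Gallai--Edmonds to all of $\overline{G}$ does not help, because large color classes are invisible to matchings.

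\textbf{The missing idea.} You need to \emph{insulate} the large classes rather than eliminate them.

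Take an $h$-coloring of $G$, that is, a cover of $H=\overline{G}$ by $h$ cliques, with $e$ minimum. Let $X$ be the union of its $r$ classes of size at least $3$. Put $H'=H-X$, with Gallai--Edmonds decomposition $(D',A',C')$ and components $D'_1,\dots,D'_c$ of $H'[D']$.

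The pairs form a maximum matching of $H'$, since a larger one would give fewer than $h$ cliques. So the singletons are its exposed vertices, and there are $c-|A'|$ of them. Every $d\in D'$ can be made a singleton without touching $X$, so minimality of $e$ forbids $H$-edges between $D'$ and $X$. Thus $G-A'$ is the join of $G[X\cup C']$ and the $G[D'_i]$.

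A swapping argument shows $\chi(G[D'_i])=(|D'_i|+1)/2$ and $\chi(G[X\cup C'])=r+|C'|/2$: replace the cliques meeting one piece by a cheaper cover of that piece. Hence $\chi(G-A')=h$. Vertex-criticality then forces $A'=\emptyset$, so each $D'_i$ is a component of $H$. Connectivity allows at most one such component, whereas the number of singletons is $2h-n+e\ge 2$, a contradiction. This route uses vertex-criticality, not your edge-by-edge increase of $\nu$.
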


\begin{theorem}[Gallai \cite{Gallai-Critical2}]
For all $p = 2, \dotsc, h-1$, if $G = (V, E)$ is a $h$-critical graph with $|V| = h+p$ vertices, then $2|E| \geq (h-1)|V|+p(h-p)-2$, with equality if and only if $G$ is the join of $K_{h-p-1}$ and a graph from $\mathcal{D}_{p+1}$.
\label{thm-gallai}
\end{theorem}

For larger graphs, Kostochka and Stiebitz \cite{KostochkaStiebitz-Excess} gave a tighter bound than Dirac's. Define the family $\mathcal{E}_h$ to be the graphs $C_5(1, a, a', b', b)$, where $a+a' = h-1$, $b+b' = h-1$, and $a'+b' \leq h-1$. 

\begin{theorem}[Kostochka and Stiebitz \cite{KostochkaStiebitz-Excess}]
If $G = (V, E)$ is an $h$-critical graph and $G$ is not the complete graph $K_h$ or a member of $\mathcal{E}_h$, then $2|E| \geq (h-1)|V|+2(h-3)$.
\label{thm-ks}
\end{theorem}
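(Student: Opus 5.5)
The plan is to work with the \emph{excess} $\sigma(G) = 2|E|-(h-1)|V| = \sum_{v}(\deg v-(h-1))$, which is nonnegative because $h$-critical graphs have minimum degree at least $h-1$. We split the vertex set into \emph{low} vertices $L$, of degree exactly $h-1$, and \emph{high} vertices $H$, so that $\sigma(G)=\sum_{v\in H}(\deg v-(h-1))$. The aim is to show $\sigma(G)\geq 2(h-3)$ unless $G=K_h$ or $G\in\mathcal{E}_h$. We split into a small-order regime and a large-order regime.

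\textbf{Small order, $|V|\leq 2h-1$.} Write $|V|=h+p$. If $p=0$ then $G=K_h$, and $p=1$ is impossible by Theorem \ref{thm-join}, since no $h$-critical graph has $h+1$ vertices. For $2\leq p\leq h-1$, Theorem \ref{thm-gallai} gives $\sigma\geq p(h-p)-2$. The function $p(h-p)$ is concave, so on $2\leq p\leq h-2$ its minimum is at the endpoints, giving $\sigma\geq 2(h-2)-2=2(h-3)$. The remaining case $p=h-1$ needs a direct check, which also follows from the stated bound. We must also certify that the graphs attaining exactly $2(h-3)$ are the ones listed. By the equality clause of Theorem \ref{thm-gallai}, these are joins of $K_{h-p-1}$ with a member of $\mathcal{D}_{p+1}$ for $p\in\{2,h-2\}$, which are of the form $C_5(\cdot)$. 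One has to verify which of them lie in $\mathcal{E}_h$; the others meet the bound with equality, which is allowed. Note that $\mathcal{E}_h$ members have excess $h-3$ or similar below the threshold, so we must confirm that every graph with $\sigma<2(h-3)$ in this range is in $\mathcal{E}_h$. Here Gallai's join decomposition (Theorem \ref{thm-join}) lets us induct on the two factors: the excess of $G_1+G_2$ with $G_i$ being $h_i$-critical equals $\sigma(G_1)+\sigma(G_2)$ plus explicit cross terms.

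\textbf{Large order, $|V|\geq 2h$.} Here we use Gallai's theorem that $G[L]$ is a Gallai forest, meaning every block is complete or an odd cycle. For each component $T$ of $G[L]$, we count edges from $T$ to $H$. Each $v\in T$ sends $h-1-\deg_T v$ edges to $H$, and Gallai's lemma on Gallai trees (for $T\neq K_h$) gives a lower bound on $e(T,H)$ in terms of $|T|$. A discharging argument then does the counting: each high vertex $u$ starts with charge $\deg u-(h-1)$ and pays to adjacent low components. We want to show the total charge is at least $2(h-3)$ by showing that $H$ must contain enough high vertices, or high vertices of large degree. The key inputs are that a component which is a clique $K_{h-1}$ needs all its attachments through distinct high vertices, and that $|V|\geq 2h$ forces at least two substantial parts.

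\textbf{Main obstacle.} I expect the main obstacle to be this large-order discharging, specifically the configurations where $G[L]$ consists of few large cliques joined through very few high vertices. These are precisely the near-$\mathcal{E}_h$ structures $C_5(1,a,a',b',b)$. My first approach would be to show that if $|H|\leq 2$, the structure is forced to be such a 5-cycle blow-up, using criticality (every edge deletion is $(h-1)$-colourable) to pin the clique sizes. Otherwise, $|H|\geq 3$ together with the degree constraints yields the bound.
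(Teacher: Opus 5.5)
The paper gives no proof of this statement; it is quoted from Kostochka and Stiebitz. Your proposal therefore has to stand on its own, and it has a genuine gap exactly where the theorem has content. Every member $C_5(1,a,a',b',b)$ of $\mathcal{E}_h$ has $1+(a+a')+(b+b')=2h-1$ vertices and excess $\sigma=h-3+2(a'-1)(b'-1)$, so all the exceptions live at $p=h-1$. Your claim that this case ``also follows from the stated bound'' is false. At $p=h-1$, Theorem~\ref{thm-gallai} gives only $\sigma\ge (h-1)\cdot 1-2=h-3$. That is Dirac's bound, attained by $\mathcal{D}_h$, and it is strictly below $2(h-3)$ for every $h\ge 4$. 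You cannot fall back on join induction either: Theorem~\ref{thm-join} needs at most $2h-2$ vertices, and the members of $\mathcal{E}_h$ are not joins, since their complements are connected. What is missing is a structural classification showing that every $h$-critical graph on $2h-1$ vertices with $\sigma<2(h-3)$ is a blow-up $C_5(1,a,a',b',b)$ satisfying the stated constraints. By contrast, your discussion of the equality cases $p\in\{2,h-2\}$ is unnecessary. Equality is permitted by the theorem, and those graphs have at most $2h-2$ vertices, so none of them can lie in $\mathcal{E}_h$.

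The large-order regime $|V|\ge 2h$ is a plan, not an argument: no inequality is actually derived from the discharging. A per-component lower bound on $e(T,H)$ that is linear in $|T|$ with a small constant leads, when summed, to a density bound of Gallai's type, $\sigma\ge\frac{h-3}{h^2-3}|V|$. This reaches $2(h-3)$ only once $|V|\ge 2(h^2-3)$, so the whole range $2h\le|V|<2h^2-6$ needs a genuinely additive argument that you have not supplied.

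Your identification of the main obstacle is also misplaced. Since every member of $\mathcal{E}_h$ has exactly $2h-1$ vertices, no exception can occur when $|V|\ge 2h$, so a $C_5$-blow-up configuration there must be ruled out, not admitted. Moreover, the test $|H|\le 2$ cannot isolate the exceptional family. For example, $C_5(1,3,2,2,3)\in\mathcal{E}_6$ is $6$-critical and has five vertices of degree $6$, so $|H|=5$. Its excess is $5<6=2(h-3)$. Hence ``$|H|\ge 3$ together with the degree constraints yields the bound'' fails unless you also use the order of the graph in an essential way.
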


By setting, say, $b' = 1$ in the definition of $\mathcal{E}_h$, we obtain $\mathcal{D}_h$ as a subfamily. As a consequence of Theorem \ref{thm-gallai} (or by a direct calculation), the graphs in $\mathcal{D}_h$ have the fewest number of edges out of all graphs in $\mathcal{E}_h$:

\begin{proposition}
The number of edges in a graph from $\mathcal{D}_h$ is $h^2-h-1$. All other graphs in $\mathcal{E}_h$ have strictly more edges.
\label{prop-edgecount}
\end{proposition}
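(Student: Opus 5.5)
We compute the edge count of $C_5(1,a,a',b',b)$ directly and then minimize it over the constraints defining $\mathcal{E}_h$. The graph has $1+a+a'+b'+b = 2h-1$ vertices. Its edges are the edges inside the five cliques plus the edges of the five complete bipartite graphs between consecutive cliques:
$$|E| = \binom{a}{2}+\binom{a'}{2}+\binom{b'}{2}+\binom{b}{2} + a + aa' + a'b' + b'b + b,$$
where the first two bipartite terms $a$ and $b$ come from the single vertex in the clique of size $1$.

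First we group the terms by the pairs $(a,a')$ and $(b,b')$. Using $a+a'=h-1$, we get
$$\binom{a}{2}+\binom{a'}{2}+aa' = \binom{h-1}{2},$$
and similarly for $b,b'$. Adding the remaining linear terms $a+b$ gives
$$|E| = 2\binom{h-1}{2} + a + b + a'b' = (h-1)(h-2) + (h-1-a') + (h-1-b') + a'b'.$$
This simplifies to
$$|E| = (h-1)h - a' - b' + a'b' = h^2-h-1 + (a'-1)(b'-1).$$

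Next we read off the conclusion. Since $a',b' \geq 1$, the product $(a'-1)(b'-1)$ is nonnegative, so $|E| \geq h^2-h-1$. Equality holds exactly when $a'=1$ or $b'=1$. By the symmetry of the 5-cycle (reflecting the cycle through the vertex of clique size $1$ swaps the roles of $(a,a')$ and $(b,b')$), the case $a'=1$ is the same as $b'=1$ after relabeling. The graphs with $b'=1$ are $C_5(1,a,a',1,h-2)$ with $a+a'=h-1$, which is exactly the family $\mathcal{D}_h$; for these the constraint $a'+b'\le h-1$ is automatic. So graphs in $\mathcal{D}_h$ have $h^2-h-1$ edges, and every other member of $\mathcal{E}_h$ has $a',b'\geq 2$, giving at least $h^2-h$ edges, which is strictly more.

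The only delicate point is bookkeeping. We must check that the clique sizes are positive integers, so that $a',b'\ge 1$ holds. We must also make sure that "all other graphs" is understood up to isomorphism, which is what lets the case $a'=1$ be identified with a member of $\mathcal{D}_h$ under reflection. As a consistency check, the value $h^2-h-1$ agrees with Gallai's equality case in Theorem \ref{thm-gallai} with $p=h-1$:
$$\frac{(h-1)(2h-1)+(h-1)-2}{2} = h^2-h-1.$$
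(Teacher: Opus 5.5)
Your proof is correct, and it is the ``direct calculation'' the paper mentions as the alternative to reading the result off Gallai's equality case (Theorem~\ref{thm-gallai} with $p=h-1$); your closed form $|E| = h^2-h-1+(a'-1)(b'-1)$ gives both claims at once. The Gallai route needs the fact that every member of $\mathcal{E}_h$ is $h$-critical on $2h-1$ vertices, whereas your count needs only that the clique sizes are positive and that the equality case $a'=1$ is identified with $\mathcal{D}_h$ by reflecting the cycle.
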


For graphs of the form $C_5(a_0, a_1, a_2, a_3, a_4)$, if any $a_i$ is 1, we call the corresponding vertex \emph{lonely}. In a triangular embedding, the triangular faces around a vertex induce a Hamiltonian cycle on its neighbors. Thus, being \emph{locally Hamiltonian}, i.e., where the neighborhood of each vertex is Hamiltonian, is a necessary condition for the existence of a triangular embedding. To make a graph from $\mathcal{E}_h$ locally Hamiltonian, one would need to add at least two edges between neighbors of a lonely vertex. Since graphs in $\mathcal{D}_h$ have two lonely vertices, one would need to add at least four edges. 

Recall that there are two $4$-critical graphs on 7 vertices, $H_7$ and $M_7$ in Figure \ref{fig-4crit}. We note that $H_7$, the Haj\'os join of two copies of $K_4$, is the one graph in $\mathcal{D}_4$, and $M_7$ has one more edge than $H_7$. These graphs form the basis of a general pattern: 

\begin{proposition}[e.g., Exercises 4.12 and 4.13, Stiebitz, Schweser, Toft \cite{Stiebitz-Brooks}]
The $h$-critical graphs on $h+3$ vertices are $K_{h-4} + H_7$ and $K_{h-4} + M_7$. 
\label{prop-kp3}
\end{proposition}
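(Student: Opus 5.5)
The plan is to apply Theorem \ref{thm-join} first and then induct on $h$, using the base case $h = 4$. Let $G$ be $h$-critical on $h+3$ vertices with $h \geq 4$.

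For the base case $h=4$, $G$ has $7$ vertices. Since $7 > 2\cdot 4-2 = 6$, Theorem \ref{thm-join} does not apply, and we argue directly. Apply Theorem \ref{thm-gallai} with $p = 3$: it gives $2|E| \geq 3\cdot 7 + 3 - 2 = 22$, so $|E| \geq 11$, with equality only for $H_7$, the unique member of $\mathcal{D}_4$. If $G \neq H_7$, then $|E| \geq 12$. Theorem \ref{thm-ks} gives $2|E| \geq 21+2$, so again $|E|\ge 12$; here $\mathcal{E}_4$ contributes only graphs on $1+3+3 = 7$ vertices, namely $C_5(1,a,a',b',b)$ with $a+a'=b+b'=3$ and $a'+b' \leq 3$.

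To finish the base case, classify $4$-critical graphs on $7$ vertices with minimum degree at least $3$ and $12$ or more edges. I would do this by a short case analysis:
\begin{itemize}
\item First show $G$ has no $K_4$ (otherwise $G=K_4$).
\item Then show that the complement has at most $9$ edges and is triangle-free-ish constrained by $3$-colorability of every edge-deleted subgraph.
\end{itemize}
Rather than grind this out, I would cite it as the standard fact that $H_7$ and $M_7$ are the only $4$-critical graphs on $7$ vertices, which is exactly the content of the cited exercise. This step is the main obstacle if done from scratch.

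For $h \geq 5$, we have $h+3 \leq 2h-2$. By Theorem \ref{thm-join}, $G = G_1 + G_2$ with $G_i$ being $h_i$-critical, $h_1+h_2 = h$, and $|V(G_1)|+|V(G_2)| = h+3$. Write $|V(G_i)| = h_i + d_i$, so $d_1+d_2 = 3$. No critical graph has exactly one more vertex than its chromatic number: an $h_i$-critical graph on $h_i+1$ vertices would need $h_i+1 \leq 2h_i-2$ and hence be a join, and inductively this is impossible; $K_1$ and $K_2$ start the chain. So $\{d_1,d_2\} = \{0,3\}$. Note $d=2$ is possible, namely odd cycles and joins with $C_5$, but $d_i=1$ forces the other $d_j = 2$, and $d_i = 1$ is excluded.

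Thus one factor is complete, say $G_1 = K_{h_1}$, and $G_2$ is $h_2$-critical on $h_2+3$ vertices. Necessarily $h_2 \geq 4$, since $3$-critical graphs are odd cycles on $2k+1 \neq 6$ vertices, and for $h_2 \le 2$ there are no such graphs. By induction, $G_2 = K_{h_2-4} + H_7$ or $K_{h_2-4}+M_7$, and therefore $G$ is $K_{h-4}+H_7$ or $K_{h-4}+M_7$.

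Conversely, joins of critical graphs are critical with additive chromatic number, so both graphs are $h$-critical. The key steps in order are: the base classification at $h = 4$, the exclusion of $d = 1$, and the reduction through Theorem \ref{thm-join}.
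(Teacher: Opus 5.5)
Your proposal is correct and follows essentially the same route as the paper: induction on $h$ via Gallai's join theorem, with one factor forced to be complete and the other being $h_2$-critical on $h_2+3$ vertices with $4 \le h_2 < h$. The base case $h=4$ is taken from the literature in both. Attribute that base case to Toft, as the paper does, rather than to the exercise whose statement you are proving. Beyond the paper, you also justify that no $k$-critical graph has $k+1$ vertices and check the converse (that both joins are $h$-critical), both of which the paper takes for granted.
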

\begin{proof}
We induct on $h$. The base case, $h = 4$, was established by Toft \cite{Toft-Critical} (note that $K_0 + G = G$).  For the inductive step, by Theorem \ref{thm-join}, any $h$-critical graph on $h+3$ vertices, for $h \geq 5$, is the join of two graphs. Since there are no $h$-critical graphs on $h+1$ vertices and for $h < 4$, there are no $h$-critical graphs on $h+3$ vertices, one of those two graphs is the complete graph on $h_1$ vertices, and the other is a $h_2$-critical graph on $h_2+3$ vertices, where $h_1 + h_2 = h$ and $4 \leq h_2 < h$. By the inductive hypothesis, the graph must be $K_{h_1} + (K_{h_2-4} + G) = K_{h-4} + G$, where $G$ is $H_7$ or $M_7$. 
\end{proof}

In each of the forthcoming rotation systems, we label the vertices of the complete graph with letters, and the vertices of $H_7$ and $M_7$ with numbers $0, \dotsc, 6$, as in Figure \ref{fig-4crit}. 

\begin{theorem}
The $8$-critical graphs embeddable in the surfaces of Euler genus 6 are $K_8$, $K_5 + C_5$, and $K_4 + H_7$. 
\label{thm-euler6}
\end{theorem}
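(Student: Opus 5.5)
We need to show two things: every $8$-critical graph embeddable in a surface of Euler genus 6 is one of $K_8$, $K_5+C_5$, $K_4+H_7$, and each of these three graphs actually embeds in both surfaces of Euler genus 6 ($S_3$ and $N_6$).

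\emph{Restricting the candidates.} Let $G=(V,E)$ be $8$-critical and embeddable in Euler genus $6$, with $n=|V|$. Proposition \ref{prop-eulerupper} gives $|E|\le 3n+12$. On the other hand, $G$ has minimum degree at least $7$. If $G\ne K_8$ and $G\notin\mathcal{E}_8$, Theorem \ref{thm-ks} gives $2|E|\ge 7n+10$, and combining the two bounds yields $n\le 14$. For $n\le 14=2h-2$, Theorem \ref{thm-join} says $G$ is a join of two smaller critical graphs. I then go through the vertex counts:
\begin{itemize}
\item[] $n=8$ forces $K_8$, and $n=9$ is impossible.
\item[] For $n=8+p$ with $2\le p\le 6$, Theorem \ref{thm-gallai} gives $2|E|\ge 7n+p(8-p)-2$. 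Comparing with $2|E|\le 6n+24$ gives $n+p(8-p)\le 26$. This rules out $p=3,4,5$ (for instance, $p=3$ gives $11+15=26$, which survives only with equality in both bounds, i.e.\ a triangular embedding of $K_4+H_7$). That equality case must be treated separately, and I check it below.
\item[] $p=2$ gives $n=10$, where $K_5+C_5$ is the unique $8$-critical graph ($K_{h-3}+C_5$ via Theorem \ref{thm-join}, since the only $h$-critical graphs on $h+2$ vertices are $K_{h-3}+C_5$).
\item[] $p=3$ gives the candidates $K_4+H_7$ and $K_4+M_7$ by Proposition \ref{prop-kp3}. Now $K_4+H_7$ has exactly $3\cdot 11+12=45$ edges, so it needs a triangular embedding, while $K_4+M_7$ has $46>45$ edges and is excluded outright.
\item[] $p=6$ gives $n=14$ with $2|E|\ge 98+10=108$, but $6n+24=108$, so again only equality survives. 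I would exclude it by noting that equality holds only for $K_1+\mathcal{D}_7$-type joins, which have lonely vertices and so fail to be locally Hamiltonian. Hence they cannot triangulate.
\end{itemize}
Finally, for $G\in\mathcal{E}_8$: by Proposition \ref{prop-edgecount}, $|E|\ge 55$ and $n\ge 15$. A member with $n$ vertices has roughly $(7n+\text{small})/2$ edges; I compute the edge count of $C_5(1,a,a',b',b)$ and compare it with $3n+12$ to show it is too dense for all admissible parameters. If a near-equality case remains, I use the local Hamiltonicity remark: a triangulation would need at least two extra edges near each lonely vertex, so the graph is at least two edges short of triangulating at a lonely vertex, and the bound $|E|\le 3n+10$ or so kills it.

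\emph{Embeddings.} $K_8$ embeds because $\gamma(K_8)=2$ and $\tilde\gamma(K_8)=4$, and adding handles or crosscaps reaches Euler genus 6. For $K_5+C_5=K_{10}-C_5$, Theorems \ref{thm-orient} and \ref{thm-nonorient} give orientable genus $\lceil 32/12\rceil=3$ and nonorientable genus $\lceil 32/6\rceil=6$. For $K_4+H_7$ I need explicit triangular rotation systems on 11 vertices (letters $a,b,c,d$ and numbers $0,\dots,6$), one orientable in $S_3$ and one nonorientable in $N_6$, possibly with twisted edges marked. I would search for these by computer, or by hand starting from a triangulation of $K_{11}$ minus the ten edges missing from $H_7$.

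The main obstacle is the tight case $K_4+H_7$. It must triangulate exactly, and $H_7$'s complement structure (two lonely-type degree-3 vertices in $H_7$ get neighborhoods filled by the $K_4$) must admit a triangulation in both surfaces. The constructive part depends entirely on finding these rotation systems; the exclusion of $K_4+M_7$ and the tight $n=14$ case is routine counting.
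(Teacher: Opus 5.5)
\textbf{Gap in the case $G\in\mathcal{E}_8$.} Your treatment of $n\le 14$ is essentially the paper's. Gallai's bound against $2|E|\le 6n+24$ kills $n=12,13$; note that $p=3$ is \emph{not} ruled out, as your own parenthetical says. Equality at $n=11$ leaves only a triangulation by $K_4+H_7$ and excludes $K_4+M_7$, and equality at $n=14$ forces $K_1+\mathcal{D}_7$, which is not locally Hamiltonian.

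The problem is the last case. Every member of $\mathcal{E}_8$ has exactly $15$ vertices, so the Euler bound is $|E|\le 57$. A direct count gives $|E(C_5(1,a,a',b',b))|=55+(a'-1)(b'-1)$. So there are members with $55$, $56$ and $57$ edges, and your plan to show every member is ``too dense'' cannot succeed. Your fallback is a deficiency of two at a lonely vertex, giving $|E|\le 3n+10=55$. That disposes only of the members with at least $56$ edges. It does not touch $\mathcal{D}_8$, whose members have exactly $55$ edges and so sit precisely at that bound.

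\textbf{The missing idea.} This is how the paper concludes. By Proposition~\ref{prop-edgecount}, the members of $\mathcal{E}_8$ with at most $55$ edges are exactly those of $\mathcal{D}_8$. These have \emph{two} lonely vertices, and the neighborhood of each is a disjoint union of two cliques with no edges between them. Hence each lonely vertex has at least two corners in its rotation whose two neighbors are nonadjacent. Two further facts make the count rigorous: $\sum_f(|f|-3)=57-|E|$, and a face of length $\ell$ contains at most $\ell-3$ such corners. So the total deficiency is at least four, giving $|E|\le 53<55$.

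\textbf{Existence half for $K_4+H_7$.} This part is also not yet proved. You defer the triangular rotation systems in $S_3$ and $N_6$ to a search, whereas the paper exhibits both explicitly. Without them, $K_4+H_7$ is not shown to belong on the list.
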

\begin{proof}
$K_8$ embeds in these surfaces as a subgraph of $K_9$. For $|V| < 15$, one can check that the Gallai bound (Theorem \ref{thm-gallai}) combined with the Euler bound (Proposition \ref{prop-eulerupper}) $2|E| \leq 6|V|-12+12g$ rules out $|V| = 12, 13$. $K_{10}-C_5$ is the unique $8$-critical graph on 10 vertices, which embeds in these surfaces by Theorems \ref{thm-orient} and \ref{thm-nonorient}. For $|V| = 11$, the Gallai bound matches the Euler bound exactly, so the only possibility is $K_4+H_7$ by Proposition \ref{prop-kp3}. This graph embeds in the orientable surface $S_3$:

$$\begin{array}{rccccccccccccccccccccccccccccccc}
a. & b & d & 6 & 4 & c & 5 & 3 & 2 & 1 & 0 \\
b. & a & 0 & 4 & 6 & 3 & 5 & 2 & c & 1 & d \\
c. & a & 4 & 3 & 6 & 0 & 1 & b & 2 & d & 5 \\
d. & a & b & 1 & 5 & c & 2 & 3 & 4 & 0 & 6 \\
0. & a & 1 & c & 6 & d & 4 & b \\
1. & a & 2 & 5 & d & b & c & 0 \\
2. & a & 3 & d & c & b & 5 & 1 \\
3. & a & 5 & b & 6 & c & 4 & d & 2 \\
4. & a & 6 & b & 0 & d & 3 & c \\
5. & a & c & d & 1 & 2 & b & 3 \\
6. & a & d & 0 & c & 3 & b & 4
\end{array}$$

and the nonorientable surface $N_6$:

$$\begin{array}{rccccccccccccccccccccccccccccccccc}
a. & b & 3 & 4 & 6 & d & 0 & c & 5 & 1 & 2 \\
b. & a & 2 & d & \TT{4} & \TT{c} & 6 & 0 & 1 & 5 & 3 \\
c. & a & 0 & 4 & \TT{b} & \TT{6} & 3 & 2 & 1 & d & 5 \\
d. & a & 6 & \TT{3} & \TT{4} & b & 2 & 5 & c & 1 & 0 \\
0. & a & d & 1 & b & 6 & 4 & c \\
1. & a & 5 & b & 0 & d & c & 2 \\
2. & a & 1 & c & 3 & 5 & d & b \\
3. & a & b & 5 & 2 & c & \TT{6} & \TT{d} & 4 \\
4. & a & 3 & \TT{d} & \TT{b} & c & 0 & 6 \\
5. & a & c & d & 2 & 3 & b & 1 \\
6. & a & 4 & 0 & b & \TT{c} & \TT{3} & d
\end{array}$$

For $|V| = 14$, the Gallai and Euler bounds are equal again. The embedding must be triangular, and by Theorem \ref{thm-gallai}, the graph is the join of $K_1$ with a graph from $\mathcal{D}_7$. However, in this graph, the neighborhood of a lonely vertex is not locally Hamiltonian: the subgraph induced on its neighbors has the $K_1$ as a cutvertex. Thus, it cannot have a triangular embedding. 

For $|V| \geq 15$, the Kostochka-Stiebitz bound (Theorem \ref{thm-ks}) exceeds the Euler bound, so the remaining graphs to check are those belonging to $\mathcal{E}_8$. As mentioned earlier, we need to add at least two edges to make the neighborhood of a lonely vertex Hamiltonian. Since the Euler bound is $|E| \leq 3(15)-6+3(6) = 57$, we are looking for graphs in $\mathcal{E}_8$ with at most 55 edges. However, by Proposition \ref{prop-edgecount}, all such graphs belong to $\mathcal{D}_8$, which have exactly 55 edges and two lonely vertices. Since we need to add four edges to make the graph locally Hamiltonian, no such embedding can exist.
\end{proof}

\begin{theorem}
The $9$-critical graphs embeddable in the (nonorientable) surface of Euler genus 9 are $K_9$, $K_6 + C_5$, $K_5 + H_7$, and $K_5 + M_7$. 
\label{thm-euler9}
\end{theorem}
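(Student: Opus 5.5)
We follow the proof of Theorem \ref{thm-euler6} closely, now with $h = 9$ and $g = 9$. Since $g$ is odd, the only surface is $N_9$. The Euler bound of Proposition \ref{prop-eulerupper} reads $2|E| \leq 6|V| - 12 + 18 = 6|V|+6$. We compare it with the lower bounds on $2|E|$ for $9$-critical graphs, splitting by $n = |V| = 9 + p$.

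\textbf{Small orders.} For $n = 9$ the graph is $K_9$, which embeds as a subgraph of a triangular embedding of $K_{10}$ in $N_9$ (Theorem \ref{thm-mct-nonorient}). There is no $9$-critical graph on $10$ vertices. For $n = 11$ ($p = 2$), Theorem \ref{thm-join} makes the graph a join, and the only possibility is $K_{11} - C_5 = K_6 + C_5$. Theorem \ref{thm-nonorient} gives $\tilde\gamma(K_{11}-C_5) = \lceil (121-77+2)/6\rceil = 8 \leq 9$, so it embeds in $N_9$ after adding a crosscap. For $n = 12$ ($p=3$), Proposition \ref{prop-kp3} says the graphs are exactly $K_5 + H_7$ and $K_5 + M_7$. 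Their edge counts are $11+10+35+45 = \ldots$; more precisely, $H_7$ has $11$ edges and $M_7$ has $12$, so the totals are $10 + 11 + 35 = 56$ and $57$. The Euler bound allows $3\cdot 12 - 6 + 27 = 57$ edges. Thus $K_5 + M_7$ would need a \emph{triangular} embedding in $N_9$, and $K_5 + H_7$ needs an embedding with exactly one quadrilateral face. For each graph I would exhibit an explicit nonorientable rotation system with twisted edges marked, as done for $N_6$ in Theorem \ref{thm-euler6}. The most economical route is to find a triangular embedding of $K_5 + M_7$ and delete the edge of $M_7$ not in $H_7$ to get $K_5 + H_7$, which is legitimate by Proposition \ref{prop-del}. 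Finding this triangulation, presumably by computer search or by hand-adjusting a $(12,9)$-triangulation, is the main obstacle. In particular, one must check that $K_5+M_7$ is locally Hamiltonian, which matters most at vertices of degree $11$.

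\textbf{Middle orders.} For $13 \le n \le 16$ ($p = 4,\dots,7$), Theorem \ref{thm-gallai} gives $2|E| \geq 8n + p(9-p) - 2$. Comparing with $6n+6$, we need $2n + p(9-p) - 8 \le 0$ for an embedding, which is false for every such $p$, since the left side is already positive at $n = 13$. For example, at $n=13$ we get $26+20-8 > 0$. These orders are therefore ruled out. Note that the Gallai bound only applies to $p \le h-1 = 8$, so $n = 17$ is also handled here, giving $2n + 0 - 8 > 0$; in any case the next step covers it.

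\textbf{Large orders.} For $n \geq 13$, Theorem \ref{thm-ks} gives $2|E| \ge 8n + 12$ unless $G \in \mathcal{E}_9$. This exceeds $6n+6$ whenever $n \ge -3$, so only members of $\mathcal{E}_9$ remain. By Proposition \ref{prop-edgecount}, a graph in $\mathcal{E}_9$ has at least $h^2-h-1 = 71$ edges. Every member of $\mathcal{E}_9$ has $n = 1 + (h-1) + (h-1) = 17$ vertices, and the Euler bound there is $3\cdot17-6+27 = 72$ edges. A graph with a lonely vertex needs at least two extra edges to become locally Hamiltonian. So only graphs with at most $70$ edges could embed, and none exist; even $\mathcal{D}_9$, with $71$ edges and two lonely vertices, would require $75$. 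This rules out everything except the four listed graphs, and the proof reduces to producing the explicit $N_9$ rotation system(s) in the $n=12$ case.
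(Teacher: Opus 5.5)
Your overall architecture matches the paper's: compare the Gallai and Kostochka--Stiebitz bounds against the Euler bound, use the lonely-vertex argument for $\mathcal{E}_9$, and give explicit embeddings at $n=12$. However, two steps are wrong as written.

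\textbf{The Euler bound.} For $N_9$ the bound is $|E|\le 3|V|-6+27$, i.e.\ $2|E|\le 6|V|+42$, not $6|V|+6$. Your $6|V|+6$ would already exclude $K_9$ (since $72>60$) and every graph on your final list. It also contradicts your own later, correct, counts of $57$ and $72$. With the right bound, the Gallai comparison for $n=9+p$ says an embedding requires $p^2-11p+26\ge 0$. This fails exactly for $p=4,\dots,7$, so $n=13,\dots,16$ are excluded. It does \emph{not} exclude $n=17$: there $p(9-p)=8$, giving $|E|\ge 71$ against the Euler bound $72$, which is precisely the $\mathcal{D}_9$ equality case. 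Likewise, Kostochka--Stiebitz beats the Euler bound only for $n\ge 16$, not for all $n\ge 13$. The case split survives this recomputation, but your middle- and large-order paragraphs must be redone. A smaller slip: $K_{10}$ triangulates $N_7$, not $N_9$. $K_9$ embeds in $N_9$ simply because $\tilde\gamma(K_9)=5$.

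\textbf{The route to $K_5+H_7$.} More seriously, your ``economical route'' cannot work. $K_5+M_7$ is $9$-critical, so deleting any edge leaves an $8$-colorable graph, which cannot be the $9$-chromatic $K_5+H_7$. Equivalently, $H_7$ is not a subgraph of $M_7$, since two critical graphs with the same chromatic number never properly nest. So there is no ``edge of $M_7$ not in $H_7$'' to delete.

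You need two independent constructions. The first is a triangulation of $N_9$ by $K_5+M_7$, which is forced because $57$ edges meets the Euler bound. The second is an embedding of $K_5+H_7$ with a single quadrilateral face. The paper obtains it by triangulating $K_5+H_7$ plus one extra edge joining two nonadjacent vertices of $H_7$ (its edge $(4,5)$), then deleting that edge; this $57$-edge graph is necessarily not isomorphic to $K_5+M_7$. The paper supplies both nonorientable rotation systems explicitly. Your proposal supplies neither, and they are the only constructive content needed to show that $K_5+H_7$ and $K_5+M_7$ actually belong on the list.
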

\begin{proof}
Similar to the previous proof, the Kostochka-Stiebitz and Gallai bounds rule out all cases except $|V| = 9, 11, 12$ and graphs in $\mathcal{E}_9$. By Theorems \ref{thm-mct-nonorient} and \ref{thm-nonorient}, $K_9$ and $K_6 + C_5$ embed in this surface. We also have triangular rotation systems for $K_5 + H_7$ (with an extra edge $(4, 5)$): 

$$\begin{array}{rccccccccccccccccccccccccccccccccc}
a. & b & 2 & c & 3 & d & 5 & e & 6 & 4 & 0 & 1 \\
b. & a & 1 & d & \TT{c} & 0 & 4 & 5 & 3 & 6 & e & 2 \\
c. & a & 2 & 1 & 5 & 4 & e & \TT{d} & \TT{b} & \TT{0} & 6 & 3 \\
d. & a & 3 & 4 & 6 & \TT{0} & \TT{e} & \TT{c} & b & 1 & 2 & 5 \\
e. & a & 5 & 1 & 0 & \TT{d} & c & 4 & 3 & 2 & b & 6 \\
0. & a & 4 & b & \TT{c} & \TT{6} & \TT{d} & e & 1 \\
1. & a & 0 & e & 5 & c & 2 & d & b \\
2. & a & b & e & 3 & 5 & d & 1 & c \\
3. & a & c & 6 & b & 5 & 2 & e & 4 & d \\
4. & a & 6 & d & 3 & e & c & 5 & b & 0 \\
5. & a & d & 2 & 3 & b & 4 & c & 1 & e \\
6. & a & e & b & 3 & c & \TT{0} & d & 4
\end{array}$$
and $K_5 + M_7$:
$$\begin{array}{rllllllllllllllllllllllllll}
a. & b & 3 & d & 6 & 1 & 2 & e & 4 & 0 & c & 5 \\
b. & a & 5 & 0 & d & \TT{c} & 1 & 6 & e & 2 & 4 & 3 \\
c. & a & 0 & 6 & 2 & 3 & 4 & e & \TT{d} & \TT{b} & \TT{1} & 5 \\
d. & a & 3 & 5 & \TT{1} & \TT{e} & \TT{c} & b & 0 & 4 & 2 & 6 \\
e. & a & 2 & b & 6 & 0 & 5 & 3 & 1 & \TT{d} & c & 4 \\
0. & a & 4 & d & b & 5 & e & 6 & c \\
1. & a & 6 & b & \TT{c} & \TT{5} & \TT{d} & e & 3 & 2 \\
2. & a & 1 & 3 & c & 6 & d & 4 & b & e \\
3. & a & b & 4 & c & 2 & 1 & e & 5 & d \\
4. & a & e & c & 3 & b & 2 & d & 0 \\
5. & a & c & \TT{1} & d & 3 & e & 0 & b \\
6. & a & d & 2 & c & 0 & e & b & 1.
\end{array}$$

By Proposition \ref{prop-edgecount}, the graphs in $\mathcal{E}_9$ have at least 71 edges, but the Euler bound is $|E| \leq 72$. Since graphs in $\mathcal{E}_9$ have a lonely vertex, no such embedding can exist. 
\end{proof}

\section{Conclusion}

We computed the genus of $K_n-C_5$ by constructing triangular embeddings of $K_n-P_3$, $K_n-(K_2 \cup P_2)$, and $K_n-P_4$ in orientable and nonorientable surfaces. The motivation of our result was to characterize the $(h(g)-1)$-critical graphs in the surfaces of Euler genus $g$, and the techniques used in the nonorientable part of the proof also yield a simpler solution to the minimum triangulations problem for nonorientable surfaces. 

If we extend the definition of $H(g)$ to $g = 0, 1$, then $h(0) = 4$ and $h(1) = 6$, coincidentally equaling the chromatic numbers of these surfaces. The odd cycles form infinitely many $(h(0)-1)$-critical planar graphs. As mentioned earlier, there are infinitely many 5-critical graphs embeddable in surfaces besides the sphere. Thus, there are infinitely many $(h(1)-1)$-critical projective planar graphs, as well.  For Euler genus $2$, $h(2)-1 = 6$, and, as mentioned earlier, the $6$-critical graphs on these surfaces have already been characterized \cite{Klein1, Klein2, Thomassen-Torus}. Thus, the remaining open cases are the surfaces of Euler genus 3 and 4. 

Recall that \v{S}krekovski \cite{Skrekovski} proved a more general result for $(h(g)-c)$-critical graphs, for any fixed positive integer $c$. In light of Proposition \ref{prop-kp3}, can we go one step further and prove a similar result $c = 2$? Finding genus embeddings of $K_n + H_7$ and $K_n + M_7$ is probably quite challenging in general: for certain residues of $n \pmod{12}$, these graphs are expected to have triangular embeddings, so we cannot always rely on Proposition \ref{prop-del}. 

\bibliographystyle{alpha}
\bibliography{biblio}

\appendix

\end{document}